\theoremstyle{plain}
\newtheorem{thm}{Theorem}
\newtheorem{prop}[thm]{Proposition}
\newtheorem{cor}[thm]{Corollary}
\theoremstyle{definition}
\newtheorem{defn}[thm]{Definition}
\newtheorem{rem}[thm]{Remark}
\DeclareMathOperator{\identity}{id}
\DeclareMathOperator{\id}{id}
\newcommand{\C}{\mathbb{C}}
\begin{document}

\title[Non-associative Ore Extensions]{Non-associative Ore Extensions}

\author{Patrik Nystedt}
\address{University West,
Department of Engineering Science, 
SE-46186 Trollh\"{a}ttan, Sweden}

\author{Johan \"{O}inert}
\address{Blekinge Institute of Technology,
Department of Mathematics and Natural Sciences,
SE-37179 Karlskrona, Sweden}

\author{Johan Richter}
\address{M\"{a}lardalen University,
Academy of Education, Culture and Communication, \\
Box 883, SE-72123 V\"{a}ster\aa s, Sweden}

\email{patrik.nystedt@hv.se; johan.oinert@bth.se; johan.richter@mdh.se}

\subjclass[2010]{17D99, 17A36, 17A99, 16S36, 16W70, 16U70}
\keywords{non-associative Ore extension, simple, outer derivation.}

\begin{abstract}
We introduce non-associative Ore extensions, $S = R[X ; \sigma , \delta]$,
for any non-associa\-tive unital ring $R$ and any additive maps 
$\sigma,\delta : R \rightarrow R$ satisfying $\sigma(1)=1$ and $\delta(1)=0$.
In the special case when $\delta$ is
either left or right $R_{\delta}$-linear,
where $R_{\delta} = \ker(\delta)$, and $R$ is $\delta$-simple, i.e.
$\{ 0 \}$ and $R$ are the only $\delta$-invariant ideals of $R$,
we determine the ideal structure of the non-associative differential polynomial ring
$D = R[X ; \id_R , \delta]$. Namely, in that case, we show that
all ideals of $D$ are generated by monic polynomials in the 
center $Z(D)$ of $D$. We also show that $Z(D) = R_{\delta}[p]$ for 
a monic $p \in R_{\delta}[X]$, unique up to addition
of elements from $Z(R)_{\delta}$.
Thereby, we generalize classical results by Amitsur on
differential polynomial rings defined by derivations on associative and simple rings.
Furthermore, we use the ideal structure of $D$ to show that $D$ is simple if and only 
if $R$ is $\delta$-simple and $Z(D)$ equals the field $R_{\delta} \cap Z(R)$.
This provides us with a non-associative generalization of a result
by \"{O}inert, Richter, and Silvestrov.
This result is in turn used to show a non-associative version
of a classical result by Jordan concer\-ning simplicity
of $D$ in the cases when the characteristic of 
the field $R_{\delta} \cap Z(R)$ is either zero 
or a prime.
We use our findings to show simplicity results for
both non-associative versions of Weyl algebras
and non-associative differential polynomial rings 
defined by monoid/group actions on compact Hausdorff spaces.
\end{abstract}

\maketitle

\pagestyle{headings}

\section{Introduction}

In 1933 Ore \cite{ore1933} introduced a version
of non-commutative polynomial rings,
nowadays called Ore extensions, that have
become one of the most useful constructions
in ring theory. The Ore extensions
play an important role when
investigating cyclic algebras, enveloping rings of solvable Lie algebras,
and various types of graded rings 
such as group rings and crossed products,
see e.g. \cite{cohn1977}, \cite{jacobson1999},
\cite{mcconell1988} and \cite{rowen1988}.
They are also a natural source of
examples and counter-examples in ring theory,
see e.g. \cite{bergman1964} and \cite{cohn1961}.
Furthermore, various special cases
of Ore extensions are used as
tools in diverse analytical settings,
such as differential-, pseudo-differential and
fractional differential operator rings
\cite{goodearl1983} and 
$q$-Heisenberg algebras \cite{hellstromsilvestrov2000}.

Let us recall the definition of an (associative) Ore extension.
Let $S$ be a unital ring.
Take $x \in S$ and let $R$ be a subring of $S$ 
containing $1$, the multiplicative identity element of $S$.
\begin{defn}\label{defore}
The pair $(S,x)$ is called an {\it Ore extension} of $R$ 
if the following axioms hold:
\begin{itemize}
\item[(O1)] $S$ is a free left $R$-module with basis
$\{ 1,x,x^2,\ldots \}$;
\item[(O2)] $xR \subseteq R + Rx$;
\item[(O3)] $S$ is associative.
\end{itemize}
If (O2) is replaced by 
\begin{itemize}
\item[(O2)$'$] $[x,R] \subseteq R$; 
\end{itemize}
then $(S,x)$ is called a {\it differential polynomial ring over} $R$. 
\end{defn}

Recall that $[x,R]$ denotes the set of finite sums of elements 
of the form $[x,r]=xr-rx$, for $r \in R$.

To construct Ore extensions, one considers
generalized polynomial rings $R[X ; \sigma , \delta]$ over an associative ring $R$, 
where $\sigma$ is a ring endomorphism of $R$,
respecting 1, and $\delta$ is a $\sigma$-derivation of $R$, i.e. 
an additive map $R \rightarrow R$ satisfying
$\delta(ab) = \sigma(a) \delta(b) + \delta(a) b$, for $a,b \in R$. 
Let $\mathbb{N}$ denote the set of non-negative integers. As an additive group $R[X ; \sigma , \delta]$  is equal to the usual polynomial ring $R[X].$ 
The ring structure on $R[X ; \sigma , \delta]$ 
is defined on monomials by
\begin{equation}\label{productmonomials}
a X^m \cdot b X^n = \sum_{i \in \mathbb{N}} a \pi_i^m(b) X^{i+n},
\end{equation}
for $a , b \in R$ and $m,n \in \mathbb{N}$, 
where $\pi_i^m$ denotes the sum of all the
${m \choose i}$ possible compositions of $i$
copies of $\sigma$ and $m-i$ copies of $\delta$ in arbitrary order
(see equation (11) in \cite{ore1933}).
Here we make the convention that
$\pi_i^m(b) = 0$, for $i,m \in \mathbb{N}$
such that $i > m$.
The product \eqref{productmonomials} makes the pair
$(R[X ; \sigma , \delta],X)$ an Ore extension of $R$.
In fact, (O1) and (O2) are immediate and (O3)
can be shown in several different ways, see e.g. 
\cite[Proposition 7.1]{bergman1978},
\cite{nystedt2013},
\cite{richter2014},
\cite[Proposition 1.6.15]{rowen1988},
or Proposition \ref{newproof} in the present article for yet another proof.
This class of generalized polynomial rings provides us with 
all Ore extensions of $R$ .
Indeed, given an Ore extension $(S,x)$ of $R$,
then define the maps $\sigma : R \rightarrow R$ and 
$\delta : R \rightarrow R$ by the relations 
$x a = \delta(a) + \sigma(a) x$, for $a \in R$.
Then it follows that 
$\sigma$ is a ring endomorphism of $R$,
respecting 1, $\delta$ is a $\sigma$-derivation of $R$
and there is a unique well defined 
ring isomorphism $f : S \rightarrow R[X;\sigma,\delta]$ subject to 
the relations $f(x) = X$ and $f|_R = \id_R$.
If $(S,x)$ is a differential polynomial ring over $R$,
then $\sigma = \id_R$ and $\delta$ is a derivation on $R$.

Many different properties of associative Ore extensions, such as
when they are integral domains, principal domains,
prime or noetherian have been studied by numerous authors
(see e.g. \cite{cozzens1975} or \cite{mcconell1988} for surveys).
Here we focus on the property of simplicity of 
differential polynomial rings $D = R[X ; \id_R , \delta]$.
Recall that $\delta$ is called {\it inner}
if there is $a \in R$ such that
$\delta(r) = ar - ra$, for $r \in R$.
In that case we write $\delta = \delta_a$.
If $\delta$ is not inner, then $\delta$ is called {\it outer}. 
We let the {\it characteristic} of a
ring $R$ be denoted by ${\rm char}(R)$. 
In an early article by Jacobson \cite{jacobson1937}
it is shown that if $\delta$ is outer and $R$ is a division ring
with ${\rm char}(R)=0$, then $D$ is simple.
The case of positive characteristic is more complicated 
and $D$ may contain non-trivial ideals.
In fact, Amitsur \cite{amitsur1950} has shown that 
if $R$ is a division ring with ${\rm char}(R)= p > 0$, 
then every ideal of $D$ is generated by a polynomial, all of whose
monomials have degrees which are multiples of $p$.
A few years later Amitsur \cite{amitsur1957} generalized this 
result to the case of simple $R$. To describe this generalization
we need to introduce some more notation.
Let $T$ be a subring of $D$.
Let $Z(T)$ denote the {\it center} of $T$, i.e. the set of all elements in $T$
that commute with every element of $T$.
If $T$ is a subring of $R$, then put $T_{\delta} = T \cap \ker(\delta)$.
Note that if $R$ is simple, then $Z(R)$ is a field.
Therefore, in that case, ${\rm char}(R) = {\rm char}(Z(R))$ and hence
${\rm char}(R)$ is either zero or a prime $p > 0$.

\begin{thm}[Amitsur \cite{amitsur1957}]\label{amitsurtheorem}
Suppose that $R$ is a simple associative ring and let
$\delta$ be a derivation on $R$.
If we put $D = R[X ; \id_R , \delta]$, then the following assertions hold:
\begin{itemize}
\item[(a)] Every ideal of $D$ is generated by a unique monic polynomial in $Z(D)$;
\item[(b)] There is a monic $b \in R_{\delta}[X]$, unique up to addition of 
elements from $Z(R)_{\delta}$, such that
$Z(D) = Z(R)_{\delta}[b]$;
\item[(c)] If ${\rm char}(R)=0$ and $b \neq 1$, then there is $c \in R_{\delta}$
such that $b = c + X$. In that case, $\delta = \delta_c$;
\item[(d)] If ${\rm char}(R) = p > 0$ and $b \neq 1$, then there is $c \in R_{\delta}$
and $b_0,\ldots,b_n \in Z(R)_{\delta}$, with $b_n=1$, such that
$b = c + \sum_{i=0}^n b_i X^{p^i}$.
In that case, $\sum_{i=0}^n b_i \delta^{p^i} = \delta_c$.
\end{itemize}
\end{thm}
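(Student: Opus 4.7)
For part (a), I would pick a nonzero ideal $I \subseteq D$ and a nonzero $f = \sum_i a_i X^i \in I$ of minimal degree $n$. Since $R$ is simple, the two-sided ideal generated by the leading coefficient $a_n$ is all of $R$, so there exist $b_j, c_j \in R$ with $\sum_j b_j a_n c_j = 1$; replacing $f$ by $\sum_j b_j f c_j \in I$ keeps the degree equal to $n$ and makes it monic. For such a monic $f$, the leading coefficient of $[f,r] = fr - rf$ equals $[1,r] = 0$, so $[f,r] \in I$ has degree strictly less than $n$ and hence vanishes by minimality, giving that $f$ commutes with $R$. Similarly $[X,f] = \sum_i \delta(a_i)X^i$ has leading term $\delta(1) X^n = 0$, hence also vanishes, forcing $\delta(a_i) = 0$ for all $i$ and so $f$ commutes with $X$. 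Thus $f \in Z(D)$; right-division by the monic $f$ shows $I = Df$, and uniqueness of the monic central generator follows from a degree and leading-coefficient comparison of any two such.

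For part (b), I first note that $Z(R)$ is a field (as $R$ is simple), hence $Z(R)_\delta$ is a subfield of $Z(R)$, and the leading coefficient of any element of $Z(D)$ lies in $Z(R)_\delta$. If $Z(D)$ contains no element of positive degree, take $b = 1$. Otherwise choose $b \in Z(D)$ monic of minimal positive degree $n$. For any $f \in Z(D)$, iterated right-division in $D$ by the monic $b$ yields a unique decomposition $f = \sum_i r_i b^i$ with $r_i \in D$ of degree $< n$. The key step is to show each $r_i$ is itself central: testing against $r \in R$ and against $X$ separately, both $[r, r_i]$ and $[X, r_i]$ have degree at most $\deg r_i < n$, so the summands in the identity $\sum_i [s, r_i] b^i = 0$ (with $s = r$ or $X$) live in the disjoint degree ranges $[in,\,(i+1)n-1]$, forcing each $[s, r_i] = 0$. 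With the $r_i$ in $Z(D)$ of degree $< n$, minimality yields $r_i \in Z(R)_\delta$, so $Z(D) = Z(R)_\delta[b]$. Uniqueness of $b$ modulo $Z(R)_\delta$ is because any two minimal monic choices differ by an element of $Z(D)$ of degree $< n$.

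For parts (c) and (d), I would exploit the explicit constraints imposed by $[b, r] = 0$. Writing $b = \sum_{i=0}^n a_i X^i$ with $a_n = 1$ and using $X^i r = \sum_{j \le i} \binom{i}{j} \delta^{i-j}(r) X^j$, the coefficient of $X^j$ in $br - rb$ yields
\[
[a_j, r] + \sum_{i > j} \binom{i}{j} a_i\, \delta^{i-j}(r) = 0 \qquad (0 \le j < n).
\]
The top relation $(j = n-1)$ reads $\delta_{a_{n-1}}(r) + n\,\delta(r) = 0$. In characteristic zero $n$ is invertible, so $\delta$ is inner, explicitly $\delta = \delta_{-a_{n-1}/n}$; this produces a degree-one central element $X + a_{n-1}/n$, which by minimality of $n$ forces $n = 1$ and yields (c) (up to sign convention in $\delta_c$). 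In characteristic $p > 0$ the same relation is consistent only when $p \mid n$; iterating the analysis at each $j$ and using that the binomial coefficients $\binom{i}{j}$ vanish mod $p$ outside of suitable $p$-power configurations restricts the support of $b$ to the exponents $\{0, p^0, p^1, \ldots, p^k\}$ (where $p^k = n$), places the nonconstant coefficients $a_{p^i}$ in $Z(R)_\delta$, and reduces the $j = 0$ relation to $\sum_i a_{p^i} \delta^{p^i} = \delta_{-a_0}$, which is (d).

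The main obstacle, to my eye, is the centrality of the remainders $r_i$ in part (b): one cannot divide inside the commutative ring $Z(D)$ directly, since $Z(D)$ typically does not contain the needed low-degree monomials, so the division is carried out in $D$ and centrality must be propagated back using the disjoint-degree argument described above, which works only because $b$ is monic. The combinatorial bookkeeping with binomial coefficients mod $p$ in (d), which restricts the support of $b$ to $p$-power exponents, is the most laborious piece but not conceptually novel; it is an iterated minimality argument in the same spirit as (a) and (b).
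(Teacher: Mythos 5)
Your arguments for (a), (b) and (c) are sound and run parallel to the paper's proof of its Theorem \ref{maintheorem} (the paper does not prove Theorem \ref{amitsurtheorem} directly but derives it as the associative special case). Two minor differences are worth noting: to produce a monic minimal-degree element you use simplicity of $R$ to write $\sum_j b_j a_n c_j = 1$, whereas the paper observes that the set of leading coefficients of minimal-degree elements of $I$ is a $\delta$-invariant ideal and invokes $\delta$-simplicity (this is what makes the argument survive the weaker hypothesis); and in (b) you expand $f = \sum_i r_i b^i$ all at once and separate terms by disjoint degree ranges, where the paper inducts on $\deg f$ one division at a time -- these are the same argument in different packaging. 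Your route to (c) via the top coefficient relation $\delta_{a_{n-1}} + n\delta = 0$ and the resulting degree-one central element is clean and slightly more direct than the paper's.

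The genuine gap is in (d). You write down the correct coefficient relations $\delta_{a_j} + \sum_{i>j}\binom{i}{j}a_i\,\delta^{i-j} = 0$, but the step ``iterating the analysis at each $j$ \ldots restricts the support of $b$ to $p$-power exponents'' does not follow from them. Each such relation is an identity of additive operators on $R$ mixing an inner derivation with several distinct powers of $\delta$; in general such an identity can hold without the individual coefficients $\binom{i}{j}a_i$ vanishing (for instance, nothing a priori rules out relations like $\delta^2 = \lambda\delta + \delta_u$ that let terms cancel against one another). What is needed -- and what Amitsur and the paper supply -- is the construction of the auxiliary polynomials $c_j = \sum_{i=j}^{n} \binom{i}{j} X^{i-j} b_i$ (the formal Hasse derivatives of $b$) together with a direct verification, using $br = rb$ and the expansion $rX^m = \sum_k(-1)^k\binom{m}{k}X^{m-k}\delta^k(r)$, that each $c_j$ is itself central. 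Minimality of $n$ then forces each $c_j$ ($j\ge 1$) to be a constant in $Z(R)_\delta$, which is precisely the statement $\binom{i}{j}b_i = 0$ for $1\le j < i\le n$ and $b_j\in Z(R)_\delta$; only at that point do Lucas' theorem and the field structure of $Z(R)_\delta$ confine the support of $b$ to $p$-powers. This centrality computation is the technical core of the paper's proof of (d)(e) and is the missing idea in your sketch; without it the ``iterated minimality'' you describe has nothing to act on, since the $c_j$ are not yet known to lie in $Z(D)$. (A small additional imprecision: in characteristic $p$ the top relation with $p\nmid n$ does not yield a contradiction outright but forces $n=1$, which is the admissible case $n=p^0$.)
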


The condition that $R$ is simple in the above theorem is not
necessary for simplicity of $D = R[X ; \id_R , \delta]$.
Consider e.g. the well known example of the first Weyl algebra
where $R = K[Y]$, $K$ is a field with ${\rm char}(K)=0$ and 
$\delta$ is the usual derivative on $R$
(for more details, see e.g. \cite[Example 1.6.32]{rowen1988}).
However, $\delta$-simplicity of $R$ is always a necessary
condition for simplicity of $D$ 
(see \cite[Lemma 4.1.3(i)]{jordan1975} or Proposition \ref{sigmadeltasimple}).
Recall that an ideal $I$ of $R$ is called $\delta$-invariant
if $\delta(I) \subseteq I$.
The ring $R$ is called $\delta$-simple if $\{ 0 \}$ and $R$
are the only $\delta$-invariant ideals of $R$.
Note that if $R$ is $\delta$-simple, then
the ring $Z(R)_{\delta}$ is always a field.
Therefore, in that case, ${\rm char}(R) = {\rm char}(Z(R)_{\delta})$ and hence
${\rm char}(R)$ is either zero or a prime $p > 0$.
Jordan \cite{jordan1975} (and Cozzens and Faith \cite{cozzens1975}
in a special case) has shown the following result.

\begin{thm}[Jordan \cite{jordan1975}]\label{jordantheorem}
Suppose that $R$ is a $\delta$-simple associative ring and let
$\delta$ be a derivation on $R$.
If we put $D = R[X ; \id_R , \delta]$, then the following assertions hold:
\begin{itemize}
\item[(a)] If ${\rm char}(R)=0$, then $D$ is simple if and only if
$\delta$ is outer;
\item[(b)] If ${\rm char}(R) = p > 0$, then $D$ is simple if and only if
no derivation of the form $\sum_{i=0}^n b_i \delta^{p^i}$,
$b_i \in Z(R)_{\delta}$, and $b_n=1$, is an inner derivation 
induced by an element in $R_{\delta}$.
\end{itemize}
\end{thm}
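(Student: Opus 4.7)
The plan is to derive Theorem \ref{jordantheorem} as a corollary of two deeper results stated earlier in the introduction: the simplicity criterion advertised in the abstract, namely that $D$ is simple if and only if $R$ is $\delta$-simple and $Z(D) = R_{\delta} \cap Z(R) = Z(R)_{\delta}$, together with the structural description of $Z(D)$ provided by (the generalization of) Theorem \ref{amitsurtheorem}. Since $R$ is already assumed $\delta$-simple, the entire proof reduces to characterizing, in each characteristic, when $Z(D) = Z(R)_{\delta}$ in terms of inner derivations.

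By Theorem \ref{amitsurtheorem}(b), $Z(D) = Z(R)_{\delta}[b]$ for a monic $b$, so $Z(D) = Z(R)_{\delta}$ is equivalent to being able to choose $b = 1$. For (a), I would feed this into Theorem \ref{amitsurtheorem}(c): in characteristic zero, $b \neq 1$ forces $b = c + X$ with $c \in R_{\delta}$ and $\delta = \delta_c$; hence $b = 1$ precisely when $\delta$ is not inner. The converse inclusion is immediate by construction, since if $\delta = \delta_c$ for some $c \in R$, then automatically $\delta(c) = 0$, and a direct calculation shows that $b = X - c$ commutes with every $r \in R$ (because $Xr - rX = \delta(r) = cr - rc$) and with $X$ (because $\delta(c) = 0$), so $b \in Z(D) \setminus Z(R)_{\delta}$ and $D$ fails to be simple.

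For (b), the same pattern applies via Theorem \ref{amitsurtheorem}(d): $b \neq 1$ exists precisely when there are $c \in R_{\delta}$, $b_0,\ldots,b_n \in Z(R)_{\delta}$ with $b_n = 1$ and $\sum_{i=0}^n b_i \delta^{p^i} = \delta_c$. Therefore the negation, $b = 1$, is exactly the statement that no derivation of this form is inner with inducing element in $R_{\delta}$. The converse is again explicit: given such a relation, the polynomial $b = \sum_i b_i X^{p^i} - c$ commutes with every $r \in R$ using the characteristic-$p$ identity $X^{p^i} r = r X^{p^i} + \delta^{p^i}(r)$ (a consequence of $\binom{p^i}{k} \equiv 0 \pmod{p}$ for $0 < k < p^i$), and commutes with $X$ since the $b_i$ lie in $Z(R)_{\delta}$ and $\delta(c) = 0$; thus $b \in Z(D) \setminus Z(R)_{\delta}$ and $D$ is not simple.

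The main obstacle is not the logic of this reduction, which is essentially formal once the two input theorems are in hand, but rather the input theorems themselves — in particular establishing that every two-sided ideal of $D$ is generated by a monic polynomial in $Z(D)$ under the mere hypothesis of $\delta$-simplicity (rather than simplicity) of $R$. That structural fact is what drives the characterization $Z(D) = Z(R)_{\delta}$, and it is the step whose non-associative generalization occupies the bulk of the paper; once it is available, Jordan's theorem drops out by the explicit bookkeeping above.
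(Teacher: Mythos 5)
Your proposal is correct and matches the route the paper intends: Theorem \ref{jordantheorem} is a cited classical result which the paper recovers as the associative special case of Theorem \ref{maintheorem}, whose parts (c), (d) and (e) supply exactly the inputs you use (simplicity $\Leftrightarrow$ $Z(D)$ a field $\Leftrightarrow$ $b=1$, plus the characteristic-dependent description of $b$ forcing $\delta$ or $\sum_i b_i\delta^{p^i}$ to be inner when $b\neq 1$). Your explicit converse constructions of the central elements $X-c$ and $\sum_i b_iX^{p^i}-c$ are the standard completion of that reduction, and you correctly identify that the only non-formal content is the extension of the Amitsur-style ideal and center structure theorems from simple to $\delta$-simple coefficient rings, which is precisely what the paper's Section \ref{simplicity} establishes.
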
 

In the case when $R$ is commutative, Cozzens and Faith \cite{cozzens1975}
(for integral domains $R$ of prime characteristic)
and Goodearl and Warfield \cite{goodearl1982} (in the general case) have shown that
$R[x ; \id_R, \delta]$ is simple 
if and only if $R$ is $\delta$-simple and $R$ is 
infinite-dimensional as a vector space over $R_{\delta}$.
If one has a family of commuting derivations,
then one can form a differential polynomial ring in several variables. 
The articles \cite{malm1988}, \cite{posner1960} and  \cite{voskoglou1985}
consider the question when such rings are simple. In the preprint \cite{nystedtoinertrichter} the authors of the present article study when non-associative differential polynomial rings in several variables are simple.

In the simplicity results mentioned above, a distinction
is often made between the cases when the characteristic of $R$
is zero or the characteristic of $R$ is prime.
Special attention is also often paid to the case when $R$ is commutative.
However, in \cite{oinert2013} \"{O}inert, Richter and Silvestrov have shown
the following simplicity result that holds
for all associative differential polynomial rings regardless of characteristic.

\begin{thm}[\"{O}inert, Richter and Silvestrov \cite{oinert2013}]\label{richtersilvestrovoinert}
If $R$ is associative and $\delta : R \rightarrow R$
is a derivation, then 
$D = R[X ; \id_R , \delta]$ is simple 
if and only if $R$ is $\delta$-simple and $Z(D)$ is a field.
\end{thm}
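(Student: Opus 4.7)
The plan is to prove both directions. For $(\Rightarrow)$, assume $D$ is simple. If $I\subseteq R$ is a $\delta$-invariant ideal, then $I[X]:=\{\sum_i a_iX^i : a_i\in I\}$ is a two-sided ideal of $D$: using formula \eqref{productmonomials}, multiplication by elements of $D$ on either side leaves every coefficient inside $I$, since $I$ is a two-sided ideal of $R$ and $\delta(I)\subseteq I$. Simplicity of $D$ then forces $I\in\{0,R\}$, so $R$ is $\delta$-simple. The assertion that $Z(D)$ is a field is the general fact that the center of a simple unital ring is a field: for $0\neq z\in Z(D)$, the annihilator $\Ann(z)$ is a two-sided ideal of $D$ (using centrality of $z$), hence zero, so $z$ is not a zero divisor; the nonzero ideal $zD$ then equals $D$, producing $w$ with $zw=1$, and using that $z$ is central and non-zero-divisor one deduces $wz=1$ and $w\in Z(D)$.

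For $(\Leftarrow)$, assume $R$ is $\delta$-simple and $Z(D)$ is a field, and let $I$ be a nonzero ideal of $D$. Let $n$ be the minimal degree of a nonzero element of $I$, and let $J\subseteq R$ consist of $0$ together with the leading coefficients of the degree-$n$ elements of $I$. Multiplying a degree-$n$ element of $I$ by $r\in R$ on either side preserves the degree (or drops it below $n$, in which case minimality makes the product zero) and alters the leading coefficient by left or right multiplication by $r$, so $J$ is a two-sided ideal of $R$. Moreover, for $f\in I$ of degree $n$ with leading coefficient $a$, the commutator $Xf-fX\in I$ has degree at most $n$ with $X^n$-coefficient $\delta(a)$; by minimality, this commutator either has degree $n$ (so $\delta(a)\in J$) or vanishes (so $\delta(a)=0\in J$). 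Hence $J$ is $\delta$-invariant, and $\delta$-simplicity of $R$ forces $J=R$; picking $1\in J$ yields a monic $f\in I$ of degree $n$.

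I now claim that this $f$ lies in $Z(D)$. Since $f$ is monic and $\delta(1)=0$, the commutator $Xf-fX$ has degree strictly less than $n$; and for each $r\in R$, a direct computation using \eqref{productmonomials} shows that the $X^n$-coefficients of $rf$ and of $fr$ both equal $r$, so $rf-fr$ likewise has degree strictly less than $n$. Both commutators belong to $I$, so minimality forces them to vanish. Therefore $f$ commutes with $X$ and with every element of $R$, and hence with all of $D$. Since $Z(D)$ is a field and $0\neq f\in Z(D)$, $f$ is invertible in $Z(D)\subseteq D$, giving $1=f\cdot f^{-1}\in I$ and thus $I=D$.

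The main point requiring care is the degree bookkeeping in the second and third paragraphs: one must verify that $J$ is $\delta$-invariant (which is exactly where $\delta$-simplicity of $R$ is indispensable) and that the two commutators $Xf-fX$ and $rf-fr$ strictly drop in degree for monic $f$. Once these observations are in hand, minimality of $n$ forces $f$ into the center, and the field hypothesis on $Z(D)$ packages the conclusion cleanly.
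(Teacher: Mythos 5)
Your proof is correct and follows essentially the same route the paper takes: the forward direction is its Propositions \ref{sigmadeltasimple} and \ref{centerfield}, and the backward direction is the minimal-degree/leading-coefficient-ideal argument from the proof of Theorem \ref{maintheorem}(a), specialized to the associative case where the associator checks become vacuous. The only cosmetic difference is at the end, where the paper passes through Proposition \ref{regular} to conclude the monic central generator is $1$, while you invert $f$ directly in the field $Z(D)$; both are fine.
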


In this article, we address the question of what it should
mean for a pair $(S,x)$ to be a non-associative Ore extension of $R$ 
and when the resulting rings are simple.
It seems to the authors of the present article
that this question has not previously been analysed in the literature.
Let us briefly describe the train of reasoning that lead the authors to
their definition of such objects.
The product \eqref{productmonomials} 
equips the set $R[X ; \sigma , \delta]$ of generalized polynomials over 
any non-associative ring $R$ 
with a well defined non-associative ring structure 
for any additive maps 
$\sigma : R \rightarrow R$ and $\delta : R \rightarrow R$
satisfying $\sigma(1)=1$ and $\delta(1)=0$.
We wish to adapt the axioms (O1), (O2) and (O3) 
to the non-associative situation so that
the resulting collection of non-associative rings
coincides with this family of generalized polynomial rings.
It turns out that this 
happens precisely when $x$
belongs to the right and middle nucleus of $S$.
To be more precise, let $S$ be a non-associative ring, by this we mean that $S$ is an additive abelian group equipped with  a multiplication
which is distributive with respect to addition and which has multiplicative identity $1$.
We suggest the following. 
\begin{defn}\label{defnonore}
The pair $(S,x)$ is called a {\it non-associative Ore extension} of $R$ if
the following axioms hold:
\begin{itemize}
\item[(N1)] $S$ is a free left $R$-module with basis
$\{ 1,x,x^2,\ldots \}$;
\item[(N2)] $xR \subseteq R + Rx$;
\item[(N3)] $(S,S,x) = (S,x,S) = \{ 0 \}$.
\end{itemize}
If (N2) is replaced by 
\begin{itemize}
\item[(N2)$'$] $[x,R] \subseteq R$; 
\end{itemize}
then $(S,x)$ is called a {\it non-associative differential polynomial ring over} $R$. 
\end{defn}
For non-empty subsets
$A$, $B$ and $C$ of $S$,
we let $(A,B,C)$ denote the set of finite sums of elements
of the form $(a,b,c) = (ab)c - a(bc)$, for $a \in A$,
$b \in B$ and $c \in C$.
Note that from (N3) it follows that the 
element $x$ is power associative, so that the 
symbols $x^i$, for $i \in \mathbb{N}$, are well defined.

Here is an outline of this article.

In Section \ref{nonassociativeringtheory},
we gather some well known facts from non-associative ring
and module theory that we need in the sequel.
In particular, we state our conventions concerning
modules over non-associative rings and
what a basis
should mean
in that situation.

In Section \ref{oreextensions}, we show that
there is a bijection between the set of non-associative Ore extensions of $R$
and the set of generalized polynomial rings $R[X ; \sigma , \delta]$ over $R$, 
where $\sigma$ and $\delta$ are additive maps $R \rightarrow R$
such that $\sigma(1)=1$ and $\delta(1)=0$.
If $T$ is a subset of $R$, then we put
$T_{\delta}^{\sigma} = \{ a \in T \mid \sigma(a)=a \ \mbox{and} \ \delta(a)=0 \}$,
$T_{\delta} = T_{\delta}^{\id_R}$ and $T^{\sigma} = T_{0}^{\sigma}$. 
In Section \ref{oreextensions}, we introduce
the class of {\it strong} non-associative Ore extensions
(see Definition \ref{defstrong}).
These correspond to generalized polynomial rings
$R[X ; \sigma , \delta]$,
where $\sigma$ is a, what we call, {\it fixed point homomorphism} of $R$
and $\delta$ is a, what we call, $\sigma$-{\it kernel derivation} of $R$.
By this we mean that
$\sigma$ and $\delta$ are maps $R \rightarrow R$ satisfying
$\sigma(1)=1$, $\delta(1)=0$ and both of them are right $R_{\delta}^{\sigma}$-linear
or both of them are left $R_{\delta}^{\sigma}$-linear.
Clearly, every classical derivation is a $\sigma$-kernel derivation with $\sigma=\id_R$ and every classical homomorphism is a fixed point homomorphism. In general, a $\sigma$-kernel derivation with $\sigma=\id_R$  will simply be called a \emph{kernel derivation}.  

In Section \ref{simplicity}, we introduce $\sigma$-$\delta$-simplicity
for rings $R$, where $\sigma$ and $\delta$ are additive maps $R \rightarrow R$
such that $\sigma(1)=1$ and $\delta(1)=0$ (see Definition \ref{defsigmadeltasimple}). 
We show that $\sigma$-$\delta$-simplicity of $R$ is a necessary
condition for simplicity of non-associative Ore extensions 
$R[X ; \sigma , \delta]$ (see Proposition \ref{sigmadeltasimple}).
We also show that if $R$ is $\sigma$-$\delta$-simple,
then $Z(R)_{\delta}^{\sigma}$
is a field (see Proposition \ref{R-sigmadelta-simple-Z-field}).
Thus, in that case, we get that ${\rm char}(R) = {\rm char}( Z(R)_{\delta}^{\sigma} )$
and hence that ${\rm char}(R)$ is either zero or a prime $p > 0$.
In Section \ref{simplicity}, we prove the following
non-associative generalization of Theorems \ref{amitsurtheorem},
\ref{jordantheorem} and \ref{richtersilvestrovoinert}.

\begin{thm}\label{maintheorem}
Suppose that $R$ is a non-associative ring and that
$\delta$ is a kernel derivation on $R$.
If we put $D = R[X ; \id_R , \delta]$, then the following assertions hold:
\begin{itemize}
\item[(a)] If $R$ is $\delta$-simple, then 
every ideal of $D$ is generated by a unique monic polynomial in $Z(D)$;
\item[(b)] If $R$ is $\delta$-simple,
then there is a monic $b \in R_{\delta}[X]$, unique up to addition 
of elements from $Z(R)_{\delta}$, such that $Z(D) = Z(R)_{\delta}[b]$; 
\item[(c)] $D$ is simple if and only if
$R$ is $\delta$-simple and $Z(D)$ is a field.
In that case $Z(D) = Z(R)_{\delta}$
in which case $b=1$;
\item[(d)] If $R$ is $\delta$-simple,
$\delta$ is a derivation on $R$ and
${\rm char}(R)=0$,
then either $b=1$ or there is $c \in R_{\delta}$ such that $b = c + X$.
In the latter case, $\delta = \delta_c$;
\item[(e)] If $R$ is $\delta$-simple,
$\delta$ is a derivation on $R$ and
${\rm char}(R)=p>0$,
then either $b=1$ or there is $c \in R_{\delta}$ and 
$b_0,\ldots,b_n \in Z(R)_{\delta}$, with $b_n=1$,
such that $b = c + \sum_{i=0}^n b_i X^{p^i}$.
In the latter case,
$\sum_{i=0}^n b_i \delta^{p^i} = \delta_c$.
\end{itemize}
\end{thm}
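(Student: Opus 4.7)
The plan is to adapt the classical strategy of Amitsur, Jordan, and \"{O}inert--Richter--Silvestrov to the non-associative setting, exploiting axiom (N3) in the form ``$X$ lies in the right and middle nucleus of $D$'' and the kernel-derivation hypothesis in the form ``multiplication by an element of $R_{\delta}$ commutes with $\delta$ on one side.'' Before attacking the five claims I would establish three technical lemmas: (i) the commutation rule $[X^{n},a] = \sum_{i=0}^{n-1}\binom{n}{i}\delta^{n-i}(a)X^{i}$ inside $D$, which is legitimate because every associator involving $X$ vanishes; (ii) a characterization of $Z(D)$, saying that $p = \sum a_{i}X^{i}$ is central iff $a_{n} \in Z(R)_{\delta}$ and the lower coefficients satisfy a triangular cascade of identities forced by $[p,X]=0$ and $[p,r]=0$ for all $r\in R$; and (iii) the observation that for any nonzero ideal $I\subseteq D$, the set of leading coefficients of elements of $I$ of minimal positive degree, together with $0$, is a $\delta$-invariant ideal of $R$, hence equals $R$ under $\delta$-simplicity.

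For part (a), I would take a nonzero ideal $I$ and use (iii) to choose a monic $p\in I$ of minimal degree. Then $[X,p]$ and $[r,p]$ lie in $I$ and have strictly smaller degree, so they must vanish, which by (ii) places $p$ in $Z(D)$. A left-division argument, valid because $p$ is monic and $X$ sits in the nucleus, then yields $I = Dp$, with $p$ unique once normalized monic. Part (b) follows by applying (a) internally to $Z(D)$: the monic central polynomial $b$ of least positive degree generates $Z(D)$ over $Z(R)_{\delta}$ via a division argument carried out within $Z(D)$, and any two such choices differ by an element of $Z(R)_{\delta}$.

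For part (c), one direction is Proposition \ref{sigmadeltasimple} together with the standard fact that the center of a simple ring is a field. Conversely, if $R$ is $\delta$-simple and $Z(D)$ is a field, any nonzero ideal equals $Dp$ for a monic $p\in Z(D)$ by (a); as a nonzero element of the field $Z(D)$, $p$ is a unit, so $1\in I$ and $I=D$. Moreover, if $D$ is simple then $b$ cannot have positive degree (else $Db$ would be a proper nonzero ideal), so $b=1$ and $Z(D)=Z(R)_{\delta}$. Parts (d) and (e) then amount to solving the centrality cascade from (ii) for $b = \sum b_{i}X^{i}$: the binomial coefficients $\binom{i}{j}$ appearing in the recursion are invertible for all $i>1$ when $\mathrm{char}(R)=0$, forcing $b$ to have degree $\le 1$ and hence $b = c + X$; in characteristic $p$ they vanish precisely when $i$ is a power of $p$, forcing $b = c + \sum b_{i}X^{p^{i}}$. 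The identity $\delta = \delta_{c}$ (respectively $\sum b_{i}\delta^{p^{i}} = \delta_{c}$) is then read off from the constant-term equation produced by $[b,r]=0$ for $r\in R$.

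The main obstacle I anticipate is the rigorous bookkeeping in lemmas (i) and (ii): each rearrangement of products that in the associative case is performed silently must here be justified through the nucleus axiom (N3), and the one-sided $R_{\delta}$-linearity of $\delta$ must be invoked every time an $R_{\delta}$-coefficient is moved past a power of $X$. Once these commutation and centrality descriptions are in place, the Amitsur and Jordan inductions transfer essentially verbatim, and parts (c)--(e) become formal consequences of (a) and (b).
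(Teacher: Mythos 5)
Your overall route is the same as the paper's (an Amitsur-style minimal-degree generator argument, division by monic central polynomials, and the binomial/Lucas analysis for (d) and (e)), but there is one genuine gap, and it sits at the heart of the non-associative difficulty: your Lemma (ii) characterizes $Z(D)$ solely by the commutation conditions $[p,X]=0$ and $[p,r]=0$ for $r\in R$. In a non-associative ring the center is $C(D)\cap N(D)$, not $C(D)$; an element commuting with everything need not lie in the nucleus. (Take $\delta=0$ and any $R$ with $C(R)\neq Z(R)$: a constant $a\in C(R)\setminus N(R)$ commutes with all of $D=R[X]$ but is not central.) The paper's Corollary \ref{corcenter} therefore adds a third condition: $p$ must \emph{associate} with all elements of $R$, i.e. $(p,r,s)=(r,p,s)=(r,s,p)=0$, and Propositions \ref{commutativecondition} and \ref{associativecondition} are exactly the bookkeeping needed to promote ``commutes with $R$ and $X$ and associates with $R$'' to ``lies in $Z(D)$.'' As written, your cascade would admit elements of $C(D)\setminus Z(D)$, so the generator produced in (a), the quotient and remainder $h,k$ in the induction for (b), and the auxiliary polynomials $c_j=\sum_{i\ge j}X^{i-j}\binom{i}{j}b_i$ in (d)--(e) are not actually shown to be central.

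The gap is fixable by the same device you already use for commutators, and this is what the paper does: in (a) the associators $(a,r,s)$, $(r,a,s)$, $(r,s,a)$ lie in the ideal $I$ and have leading coefficient zero (since $a$ is monic), hence degree $<m$, hence vanish by minimality; in (b) one applies the three associator maps to $g=hb+k$ and runs the same degree comparison that you run for $[r,h]$; in (d)--(e) the identities $(r,s,b)=(b,r,s)=0$ pass to the coefficients $b_i$ and hence to the $c_j$. None of this is automatic from (N3) alone --- (N3) only puts $X$ in the middle and right nuclei and says nothing about the coefficients --- so the associator verification must be made explicit at each of these three points before the Amitsur and Jordan inductions can be said to transfer.
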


In Section \ref{sectionweyl},
we introduce non-associative versions of 
the first Weyl algebra
(see Definition \ref{definitionweyl}) and we show that they are often 
simple regardless of the characteristic (see Theorem \ref{theoremweyl}).

In Section \ref{sectiondynamics}, 
we introduce a special class of $\sigma$-kernel derivations
induced by ring automorphisms
(see Definition \ref{definitionkernel}).
This yields simplicity results for a differential 
polynomial ring analogue of the quantum plane
(see Theorem \ref{theoremquantumtorus}) and
for differential polynomial rings defined by monoid/group actions
on compact Hausdorff spaces (see Theorem~\ref{NYtheoremdynamics} and Theorem~\ref{theoremdynamics}).

In Section \ref{sectionassociative},
we show that if the coefficients are associative, 
then we can often obtain simplicity of the differential polynomial ring
just from the assumption that the map $\delta$ is not a derivation.

\section{Preliminaries from Non-associative Ring Theory}\label{nonassociativeringtheory}

In this section, we recall some notions from non-associative
ring theory that we need in subsequent sections. 
Although the results stated in this
section are presumably rather well known, we have, for the convenience of the reader,
nevertheless chosen to include proofs of these statements.

Throughout this section, 
$R$ denotes a non-associative ring.
By this we mean that $R$ is an additive abelian group in which a multiplication
is defined, satisfying left and right distributivity.
We always assume that $R$ is unital and that the multiplicative identity 
of $R$ is denoted by $1$.
The term ''non-associative'' should be interpreted 
as ''not necessarily associative''.
Therefore all associative rings are non-associative.
If a ring is not associative,
we will use the term ''not associative ring''. 

By a {\it left module} over $R$ we mean an additive group $M$
equipped with a biadditive map 
$R \times M \ni (r,m) \mapsto rm \in M$.
In that case, we say that a subset $B$ of $M$ is a basis
if for every $m \in M$, there are unique $r_b \in R$, for $b \in B$,
such that $r_b = 0$ for all but finitely many $b \in B$,
and $m = \sum_{b \in B} r_b b$.
{\it Right modules} over $R$ and bases are defined in an analogous manner.

Recall that the \emph{commutator} $[\cdot,\cdot] : R \times R \rightarrow R$ 
and the \emph{associator} $(\cdot,\cdot,\cdot) : R \times R \times R \rightarrow R$ 
are defined by $[r,s]=rs-sr$ and
$(r,s,t) = (rs)t - r(st)$ for all $r,s,t \in R$, respectively.
The \emph{commuter} of $R$, denoted by $C(R)$,
is the subset of $R$ consisting
of elements $r \in R$ such that $[r,s]=0$
for all $s \in R$.
The \emph{left}, \emph{middle} and \emph{right nucleus} of $R$,
denoted by $N_l(R)$, $N_m(R)$ and $N_r(R)$, respectively, are defined by 
$N_l(R) = \{ r \in R \mid (r,s,t) = 0, \ \mbox{for} \ s,t \in R\}$,
$N_m(R) = \{ s \in R \mid (r,s,t) = 0, \ \mbox{for} \ r,t \in R\}$, and
$N_r(R) = \{ t \in R \mid (r,s,t) = 0, \ \mbox{for} \ r,s \in R\}$.
The \emph{nucleus} of $R$, denoted by $N(R)$,
is defined to be equal to $N_l(R) \cap N_m(R) \cap N_r(R)$.
From the so-called \emph{associator identity} 
$u(r,s,t) + (u,r,s)t + (u,rs,t) = (ur,s,t) + (u,r,st)$,
which holds for all $u,r,s,t \in R$, it follows that
all of the subsets $N_l(R)$, $N_m(R)$, $N_r(R)$ and $N(R)$
are associative subrings of $R$.
The \emph{center} of $R$, denoted by $Z(R)$, is defined to be equal to the 
intersection $N(R) \cap C(R)$.
It follows immediately that $Z(R)$ is an associative, unital
and commutative subring of $R$.

\begin{prop}\label{intersection}
The following three equalities hold:
\begin{align}
Z(R) &= C(R) \cap N_l(R) \cap N_m(R); \label{FIRST}\\
Z(R) &= C(R) \cap N_l(R) \cap N_r(R); \label{SECOND}\\
Z(R) &= C(R) \cap N_m(R) \cap N_r(R). \label{THIRD}
\end{align}
\end{prop}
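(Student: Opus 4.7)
The plan is to observe that the inclusion $Z(R)\subseteq C(R)\cap N_l(R)\cap N_m(R)$, and likewise for the other two, is immediate from the definition $Z(R)=C(R)\cap N(R)$. So the content is the three reverse inclusions: knowing that $r\in C(R)$ and that $r$ lies in two of the three nuclei, I must show $r$ lies in the remaining one. Each of these three implications will be handled by a short direct associator manipulation, using the commutativity of $r$ with every element of $R$ to shift $r$ into a position where one of the two nucleus hypotheses on $r$ applies.

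Concretely, suppose first that $r\in C(R)\cap N_l(R)\cap N_m(R)$ and let $s,t\in R$; I aim to show $(s,t,r)=0$, i.e., $(st)r=s(tr)$. The idea is to push $r$ to the left using commutativity and then convert the resulting expression with the $N_l$ and $N_m$ hypotheses: write $(st)r=r(st)=(rs)t=(sr)t=s(rt)=s(tr)$, where the equalities use $r\in C(R)$, $r\in N_l(R)$, $r\in C(R)$, $r\in N_m(R)$, $r\in C(R)$, respectively. For the second identity, assume $r\in C(R)\cap N_l(R)\cap N_r(R)$ and target $(s,r,t)=0$, i.e., $(sr)t=s(rt)$; the same strategy gives $(sr)t=(rs)t=r(st)=(st)r=s(tr)=s(rt)$, where the $N_l$ and $N_r$ hypotheses on $r$ produce the second and fourth equalities and the rest come from commutativity. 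For the third, assume $r\in C(R)\cap N_m(R)\cap N_r(R)$ and target $(r,s,t)=0$, i.e., $(rs)t=r(st)$; then $(rs)t=(sr)t=s(rt)=s(tr)=(st)r=r(st)$, where the $N_m$ and $N_r$ hypotheses provide the second and fourth equalities.

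There is really no obstacle here; the only thing to be careful about is to keep track of which of $N_l$, $N_m$, $N_r$ one is using at each step and to apply commutativity only to move $r$ past a single adjacent factor (not past a product, which would require an associativity hypothesis one does not yet have). As an alternative one could feed the associator identity $u(r,s,t)+(u,r,s)t+(u,rs,t)=(ur,s,t)+(u,r,st)$ with suitable substitutions, but the direct five-step chains above are shorter and more transparent. I would present the proof as three parallel short computations, or even factor out a single lemma that any $r\in C(R)$ satisfies $(s,t,r)=(r,s,t)=(s,r,t)$ (up to signs in the associative case, but here as genuine equalities obtained by pushing $r$ around), from which the three statements drop out at once.
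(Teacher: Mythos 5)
Your proof is correct and is essentially the paper's own argument: the paper proves the first equality by exactly your five-step chain $(st)r=r(st)=(rs)t=(sr)t=s(rt)=s(tr)$ and leaves the other two, which you carry out correctly, to the reader. Only your closing aside is suspect: for $r\in C(R)$ alone the identities $(s,t,r)=(r,s,t)=(s,r,t)$ do not follow (each of your chains uses two nucleus hypotheses on $r$, not just commutativity), so that ``single lemma'' should be dropped and the three parallel computations kept as written.
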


\begin{proof}
We only show \eqref{FIRST}. The equalities \eqref{SECOND} and \eqref{THIRD} are shown 
in a similar way and are therefore left to the reader.
It is clear that $Z(R) \subseteq C(R) \cap N_l(R) \cap N_m(R)$.
Now we show the reversed inclusion. 
Take $r \in C(R) \cap N_l(R) \cap N_m(R)$. 
We need to show that $r \in N_r(R)$.
Take $s,t \in R$.
We wish to show that $(s,t,r)=0$, i.e. $(st)r = s(tr)$.
Using that $r\in C(R) \cap N_l(R) \cap N_m(R)$
we get $(st)r = r(st) = (rs)t = (sr)t = s(rt) = s(tr)$.
\end{proof}

\begin{prop}\label{centerInvClosed}
If $r \in Z(R)$ and $s \in R$ satisfy $rs = 1$,
then $s \in Z(R)$.
\end{prop}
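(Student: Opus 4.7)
The plan is to leverage the fact that $r$ sits in the nucleus so strongly that multiplication by $r$ behaves almost associatively, and then reduce the desired identities to injectivity of multiplication by $r$.

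First I would note that since $r \in C(R)$, we have $sr = rs = 1$, so $s$ is a two-sided inverse of $r$. The crucial preliminary observation is that left multiplication by $r$ is injective: if $rv = 0$ for some $v \in R$, then using $r \in N_m(R)$ we can write
\[
v = 1 \cdot v = (sr)v = s(rv) = s \cdot 0 = 0.
\]
(Indeed, $(s,r,v)=0$ because $r \in N_m(R)$.)

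Next I would invoke Proposition \ref{intersection}: to conclude $s \in Z(R)$, it suffices to show $s \in C(R) \cap N_l(R) \cap N_m(R)$. For each of the three membership claims, the strategy is identical: form the relevant "defect" element (e.g.\ $sa-as$, $(sa)b - s(ab)$, or $(as)b - a(sb)$), and show that left-multiplying by $r$ kills it; then the injectivity step above forces the defect to vanish. For instance, for $s \in C(R)$ and arbitrary $a \in R$, one computes $r(sa) = (rs)a = a$ using $r \in N_l(R)$, and $r(as) = (ra)s = (ar)s = a(rs) = a$ using $r \in N_l(R) \cap C(R) \cap N_m(R)$; subtracting gives $r(sa-as)=0$, hence $sa = as$. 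The verifications that $s \in N_l(R)$ and $s \in N_m(R)$ follow the same template: both $r \cdot ((sa)b)$ and $r \cdot (s(ab))$ reduce to $ab$ by two applications of $r \in N_l(R)$, and both $r \cdot ((as)b)$ and $r \cdot (a(sb))$ reduce to $ab$ using the fact that $r$ lies in all three nuclei together with $r \in C(R)$.

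There is no genuinely hard step here; the only thing to watch carefully is that in each reduction the associator is being killed by some nucleus property of $r$, never of $s$ (which would be circular). Once these reductions are written out, the three claims $s \in C(R)$, $s \in N_l(R)$, $s \in N_m(R)$ follow uniformly from the injectivity of left multiplication by $r$, and Proposition \ref{intersection} then yields $s \in Z(R)$.
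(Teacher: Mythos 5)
Your proof is correct, and it takes a mildly but genuinely different route from the paper's. Both arguments reduce the claim, via Proposition \ref{intersection}, to verifying $s \in C(R) \cap N_l(R) \cap N_m(R)$, and both exploit only the nucleus and commuter properties of $r$ (never of $s$), so there is no circularity. The difference is in the mechanism: the paper verifies each membership by a direct ``sandwich'' computation, inserting $1 = rs$ into the expression and reassociating past $r$ until the desired identity falls out (e.g.\ $su = (su)(rs) = (r(su))s = ((rs)u)s = us$), whereas you first isolate a cancellation lemma --- left multiplication by $r$ is injective, since $rv=0$ forces $v=(sr)v=s(rv)=0$ using $r\in N_m(R)$ --- and then show uniformly that $r$ annihilates each defect $sa-as$, $(sa)b-s(ab)$, $(as)b-a(sb)$. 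Your version buys a cleaner, more systematic template (one lemma plus three routine reductions of the form $r\cdot(\text{defect})=ab-ab=0$), at the cost of one extra preliminary step; the paper's version is marginally shorter but each of its three computations has to be engineered separately. Either proof is complete and elementary.
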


\begin{proof}
Let $r\in Z(R)$ and suppose that $rs=1$.
First we show that $s \in C(R)$.
To this end, take $u \in R$.
Then $su = (su)1 = (su)(rs) = (r(su))s=
((rs)u)s = (1u)s = us$ and hence $s \in C(R)$.
By Proposition \ref{intersection}, we are done if we can show
$s \in N_l(R) \cap N_m(R)$. To this end, take $v \in R$.
Then $s(uv) = s((1u)v)= s(((rs) u)v) = 
(rs) ( (su) v ) = 1( (su)v ) = (su)v$
which shows that $s \in N_l(R)$. 
We also see that
$(us)v = (us)(1v) = (us) ( (rs) v ) =
( u (rs) ) (sv) = (u1)(sv) = u(sv)$
which shows that $s \in N_m(R)$.
\end{proof}

\begin{prop}\label{centerfield}
If $R$ is simple, then $Z(R)$ is a field.
\end{prop}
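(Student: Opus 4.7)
The plan is to show that every nonzero element of $Z(R)$ has a multiplicative inverse inside $Z(R)$, and that $Z(R)$ itself is nonzero; since we already know $Z(R)$ is an associative, commutative, unital subring, this will suffice.

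First I would fix a nonzero element $r \in Z(R)$ and form the set $rR = \{rs \mid s \in R\}$, aiming to show it is a (two-sided) ideal of $R$. It is clearly an additive subgroup, so only the two closure conditions need checking. For $s,t \in R$, the identity $(rs)t = r(st)$ follows from $r \in N_l(R)$, so $(rR)R \subseteq rR$. For the other side, $t(rs) = (tr)s = (rt)s = r(ts)$, where the first equality uses $r \in N_r(R)$, the second uses $r \in C(R)$, and the third uses $r \in N_l(R)$; hence $R(rR) \subseteq rR$. Since $r = r \cdot 1 \in rR$ and $r \neq 0$, the ideal $rR$ is nonzero, so simplicity of $R$ forces $rR = R$. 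In particular there exists $s \in R$ with $rs = 1$.

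At this point Proposition \ref{centerInvClosed} immediately gives $s \in Z(R)$, so $r$ is invertible in $Z(R)$. It remains to observe that $Z(R)$ is nontrivial: since $R$ is simple, $R \neq \{0\}$, hence $1 \neq 0$, and $1 \in Z(R)$ trivially. Combined with the commutativity and associativity of $Z(R)$ already established, this shows $Z(R)$ is a field.

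The only genuinely delicate point is verifying that $rR$ is a two-sided ideal in the non-associative setting; the associativity-like manipulations required there are exactly the ones licensed by $r$ lying in every nucleus and in the commuter. Once that is in place, the rest is essentially a direct appeal to Proposition \ref{centerInvClosed}.
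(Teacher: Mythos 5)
Your proof is correct and follows essentially the same route as the paper: generate the ideal $rR$ (the paper uses $Rr$, the same set since $r \in C(R)$), invoke simplicity to get $rs=1$, and apply Proposition \ref{centerInvClosed}; you merely spell out the ideal verification that the paper leaves implicit. One cosmetic slip: the equality $t(rs)=(tr)s$ is licensed by $r \in N_m(R)$ rather than $N_r(R)$, but since $r$ lies in all three nuclei this does not affect the argument.
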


\begin{proof}
We already know that $Z(R)$ is a unital commutative ring.
What is left to show is that every non-zero element of $Z(R)$
has a multiplicative inverse in $Z(R)$.
To this end, take a non-zero $r \in Z(R)$.
Then $Rr$ is a non-zero ideal of $R$.
Since $R$ is simple, this implies that $R = Rr$.
In particular, we get that there is $s \in R$
such that $1 = sr$. By Proposition \ref{centerInvClosed},
we get that $s \in Z(R)$ and we are done.
\end{proof}

\section{Non-associative Ore extensions}\label{oreextensions}

In this section, we show that
there is a bijection between the set of (strong) non-associative Ore extensions of $R$
and the set of generalized polynomial rings $R[X ; \sigma , \delta]$ over $R$, 
where $\sigma$  (is a fixed point homomorphism) and 
$\delta$ (is a $\sigma$-kernel derivation) are
additive maps $R \rightarrow R$
such that $\sigma(1)=1$ and $\delta(1)=0$
(see Proposition \ref{sufficientpolynomial} and Proposition \ref{necessarypolynomial}).
We also show that if
$S = R[X ; \sigma , \delta]$ is a generalized polynomial ring, then 
$S$ is associative if and only if
$R$ is associative, $\sigma$ is a ring endomorphism and
$\delta$ is a $\sigma$-derivation
(see Proposition \ref{newproof}).

Throughout this section, $R$ denotes a non-associative ring.

\begin{defn}\label{definitionpolynomials}
By a formal set of polynomials $R[X]$ over $R$ we mean the collection 
of functions $f : \mathbb{N} \rightarrow R$ with the
property that $f(n)=0$ for all but finitely many $n \in \mathbb{N}$.
If $f,g \in R[X]$ and $r,s \in R$, then we define $rf + sg \in R[X]$
by the relation $(rf + sg)(n) = rf(n) + sg(n)$,
for $n \in \mathbb{N}$. 
If we for each $n \in \mathbb{N}$, let $X^n \in R[X]$ be defined
by $X^n(m) = 1$, if $m=n$, and $X^n(m)=0$, if $m \neq n$,
then $R[X]$ is a free left $R$-module with 
$B = \{ X^n \}_{n \in \mathbb{N} }$ as a basis.
In fact, for each $f \in R[X]$, we have that
$f = \sum_{n \in \mathbb{N}} f(n) X^n$.
By the degree of $f$, denoted by $\deg(f)$, we mean
the supremum of $\{ -\infty \} \cup \{ n \in \mathbb{N} \mid f(n) \neq 0 \}$.   
If $f \neq 0$, then we call $f( \deg(f) )$ the {\it leading coefficient of} $f$.
If the leading coefficient of $f$ is 1, then we say that $f$ is {\it monic}.
\end{defn}

\begin{defn}
Let $\sigma : R \rightarrow R$ and $\delta : R \rightarrow R$
be additive maps such that $\sigma(1)=1$ and $\delta(1)=0$. 
By the generalized polynomial ring
$R[X ; \sigma , \delta]$ over $R$ defined by $\sigma$ and $\delta$ 
we mean the set $R[X]$ of formal 
polynomials over $R$ equipped with the product defined 
on monomials by the relation \eqref{productmonomials}.
We will often identify each $r \in R$
with $rX^0$. It is clear that $R[X ; \sigma , \delta]$ is a 
non-associative ring with $1 = X^0$. 
It is also clear that $X$ is power associative
so that $X^n$, for $n > 0$, is in fact equal to the product of $X$ with itself $n$ times. 
\end{defn}

\begin{defn}\label{defstrong}
Suppose that $(S,x)$ is a non-associative Ore extension of $R$.
Put $R_x = \{ a \in R \mid ax = xa \}$.
We say that $(S,x)$ is {\it strong} if at least one of the following axioms holds:
\begin{itemize}
\item[(N4)] $(x,R,R_x) = \{ 0 \}$;
\item[(N5)] $(x,R_x,R) = \{ 0 \}$.
\end{itemize}
In that case we call $R_x$ {\it the ring of constants of $R$}.
If $(S,x)$ is a non-associative differential polynomial ring,
then we say that it is strong if it is strong as a 
non-associative Ore extension.
\end{defn}

The usage of the term ''ring'' in Definition \ref{defstrong}
is justified by the next result.

\begin{prop}
If $(S,x)$ is a strong non-associative Ore extensions of $R$, 
then $R_x$ is a subring of $R$.
\end{prop}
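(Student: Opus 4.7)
The plan is to verify the three closure conditions defining a subring of $R$. Unitality is immediate, since $1 \cdot x = x = x \cdot 1$ gives $1 \in R_x$, and closure under addition and negation follows at once from biadditivity of multiplication: if $a, b \in R_x$ then $(a \pm b)x = ax \pm bx = xa \pm xb = x(a \pm b)$. The whole content of the proposition is therefore closure under multiplication.

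So suppose $a, b \in R_x$; the goal is $(ab)x = x(ab)$. The strategy is to transport $x$ from the right of the product $(ab)x$ to the left of $x(ab)$ one factor at a time, alternating between two moves: (i) reassociate an associator that contains $x$ in its middle or right position, which vanishes by (N3), and (ii) swap $x$ past $a$ or past $b$ using the defining property of $R_x$. Concretely, from $(a,b,x) = 0$ (i.e. $x \in N_r(S)$) we get $(ab)x = a(bx)$; then $b \in R_x$ gives $a(bx) = a(xb)$; next $(a,x,b) = 0$ (i.e. $x \in N_m(S)$) gives $a(xb) = (ax)b$; and finally $a \in R_x$ yields $(ax)b = (xa)b$.

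At this point the chain needs one last link, namely $(xa)b = x(ab)$, which is the vanishing of the associator $(x,a,b)$. This is precisely the associator that (N3) does \emph{not} control, because $x$ now sits in the left slot. This is where the hypothesis that $(S,x)$ is \emph{strong} is indispensable: under (N4) the set $(x,R,R_x)$ is zero, so $(x,a,b) = 0$ because $a \in R$ and $b \in R_x$; under (N5) the set $(x,R_x,R)$ is zero, so $(x,a,b) = 0$ because $a \in R_x$ and $b \in R$. Either way the last associator vanishes and we conclude $(ab)x = x(ab)$.

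There is no real obstacle beyond bookkeeping; the main point is the division of labour between (N3), which handles the two associators that contain $x$ in the middle or right slot, and the strong axiom (N4) or (N5), which is tailored precisely to dispose of the remaining associator with $x$ on the left.
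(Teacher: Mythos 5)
Your proof is correct and follows essentially the same chain of equalities as the paper's own proof: reassociate via (N3), swap using membership in $R_x$, and invoke (N4) or (N5) for the final associator $(x,a,b)$. The only difference is that you spell out explicitly how each of the two strongness axioms applies, which the paper leaves implicit.
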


\begin{proof}
It is clear that $R_x$ is an additive subgroup of $R$ containing 1.
Now we show that $R_x$ is multiplicatively closed.
Take $a,b \in R_x$. Then
$(ab)x \stackrel{(N3)}{=} a(bx) \stackrel{[ b \in R_x ]}{=} 
a(xb) \stackrel{(N3)}{=} (ax)b \stackrel{[a \in R_x ]}{=}
(xa)b = x(ab)$. The last equality follows from the strongness of $(S,x)$.
Therefore $ab \in R_x$.
\end{proof}

\begin{prop}\label{sufficientpolynomial}
Every generalized polynomial ring $S = R[X ; \sigma , \delta]$ over $R$
(with $\sigma$ a fixed point homomorphism and
$\delta$ a $\sigma$-kernel derivation)
is a (strong) non-associa\-tive Ore extension of $R$ with $x = X$.
\end{prop}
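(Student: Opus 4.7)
The plan is to verify the axioms (N1), (N2), (N3) directly from the defining product \eqref{productmonomials}, and then, under the strong hypothesis, to verify one of (N4), (N5). Axiom (N1) is immediate from Definition \ref{definitionpolynomials}, which sets up $R[X;\sigma,\delta]$ as the free left $R$-module $R[X]$ with basis $\{X^n\}_{n \in \mathbb{N}}$. Axiom (N2) is a one-line computation: applying \eqref{productmonomials} with $a=1$, $m=1$ gives $X \cdot b = \pi_0^1(b) + \pi_1^1(b) X = \delta(b) + \sigma(b) X \in R + RX$.

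The heart of the argument is (N3). The plan is to reduce, by biadditivity, to associators on monomials and then to lean on two combinatorial facts about the maps $\pi_i^m$. First, since $\delta(1)=0$ and $\sigma(1)=1$, any composition of $\sigma$'s and $\delta$'s containing at least one $\delta$ vanishes on $1$; consequently $\pi_i^m(1) = \delta_{im}$ (Kronecker delta). This gives the basic power identities $X^m \cdot X^n = X^{m+n}$ and $bX^n \cdot X = bX^{n+1}$ straight from \eqref{productmonomials}. Second, splitting the $\binom{m+1}{i}$ compositions defining $\pi_i^{m+1}$ according to whether the innermost letter is $\delta$ or $\sigma$ yields the Pascal-type recursion
\begin{equation*}
\pi_i^{m+1}(b) = \pi_i^m(\delta(b)) + \pi_{i-1}^m(\sigma(b)).
\end{equation*}
With these in hand, the right-nucleus identity $(aX^m, bX^n, X)=0$ reduces on both sides to $\sum_i a\,\pi_i^m(b)\, X^{i+n+1}$, and the middle-nucleus identity $(aX^m, X, bX^n)=0$ reduces, after reindexing, to the statement that $\sum_i a\,\pi_i^{m+1}(b)\, X^{i+n}$ equals $\sum_i a\bigl(\pi_i^m(\delta(b)) + \pi_{i-1}^m(\sigma(b))\bigr) X^{i+n}$, which is precisely the recursion above.

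For the strong statement, first observe that $aX = Xa$ if and only if $\delta(a)=0$ and $\sigma(a)=a$, so $R_X = R_\delta^\sigma$. Suppose $\sigma$ and $\delta$ are both right $R_\delta^\sigma$-linear. For $b \in R$ and $c \in R_\delta^\sigma$, expanding $(X\cdot b)\cdot c$ using \eqref{productmonomials} and using $\delta(c)=0$, $\sigma(c)=c$ gives $\delta(b)c + \sigma(b)c\,X$, while $X\cdot(bc) = \delta(bc) + \sigma(bc)X$; these agree exactly because right $R_\delta^\sigma$-linearity yields $\delta(bc)=\delta(b)c$ and $\sigma(bc)=\sigma(b)c$. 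Thus (N4) holds. The left-linear case is symmetric: for $c \in R_\delta^\sigma$ and $b \in R$, the identities $\delta(cb)=c\,\delta(b)$ and $\sigma(cb)=c\,\sigma(b)$ make $(X\cdot c)\cdot b = X\cdot(cb)$, yielding (N5).

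The main obstacle is the combinatorial bookkeeping around $\pi_i^m$, namely the two identities $\pi_i^m(1)=\delta_{im}$ and the Pascal recursion, together with the care needed to avoid circularity: we must use \eqref{productmonomials} to compute all nontrivial products (including $X^m \cdot X^n$) rather than assuming any associativity of $X$ with itself or with elements of $R$. Once those elementary identities are established, the three associator computations collapse to routine index manipulations.
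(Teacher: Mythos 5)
Your proposal is correct and follows essentially the same route as the paper's proof: (N1) and (N2) read off from the construction, (N3) via the two monomial associator identities reduced to the Pascal-type recursion $\pi_i^{m+1}(b) = \pi_i^m(\delta(b)) + \pi_{i-1}^m(\sigma(b))$, and (N4) resp.\ (N5) from right resp.\ left $R_{\delta}^{\sigma}$-linearity after identifying $R_X = R_{\delta}^{\sigma}$. Your explicit isolation of the identity $\pi_i^m(1)=\delta_{im}$ is a minor presentational improvement over the paper, which uses it only implicitly.
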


\begin{proof}
We first show the ''non-strong'' statement.
From Definition \ref{definitionpolynomials}, we know that
$S$ is free as a left $R$-module with $B$ as a basis.
Therefore (N1) holds.
Also $Rx = RX^0 \cdot 1X = \delta(R) + \sigma(R)X =
\delta(R) + \sigma(R)x \subseteq R + Rx$.
Therefore (N2) holds.
Now we show (N3).
Suppose that $a,b \in R$ and $m,n \in \mathbb{N}$. 
Then we get that
$(aX^m \cdot bX^n) \cdot X = 
\sum_{i \in \mathbb{N}} a \pi_i^m(b) X^{i+n} \cdot X = 
\sum_{i \in \mathbb{N}} a \pi_i^m(b) X^{i+n+1} =a
X^m \cdot (bX^{n+1}) = aX^m \cdot (bX^n \cdot X).$
Next we get that
$(aX^m \cdot X) \cdot bX^n = 
aX^{m+1} \cdot bX^n =
\sum_{i \in \mathbb{N}} a \pi_i^{m+1}(b) X^{i+n} =
\sum_{i \in \mathbb{N}} a \pi_i^m(\delta(b)) X^{i+n} +
\sum_{i \in \mathbb{N}} a \pi_{i-1}^m(\sigma(b)) X^{i+n} =  
aX^m \cdot ( \delta(b) X^n + \sigma(b)X^{n+1} ) =
aX^m \cdot (X \cdot bX^n).$
Now we show the ''strong'' statement.
Note that $R_X = R_{\delta}^{\sigma}$.
Suppose first that both $\sigma$ and $\delta$
are right $R_{\delta}^{\sigma}$-linear.
We show (N4).
To this end, take $a \in R$ and $b \in R_X$. Then
$ (X \cdot a) \cdot b = 
(\delta(a) + \sigma(a)X) \cdot b 
\stackrel{(N3)}{=}
\delta(a) b + \sigma(a) (X b) = [b \in R_{\delta}^{\sigma} ]= 
\delta(a)b + \sigma(a) (bX) 
\stackrel{(N3)}{=}
\delta(a)b + (\sigma(a) b)X.$
Since $\sigma$ and $\delta$
are right $R_{\delta}^{\sigma}$-linear, we get that
$ (X \cdot a) \cdot b = \delta(ab) + \sigma(ab)X = X \cdot (ab)$.
Suppose now that both $\sigma$ and $\delta$
are left $R_{\delta}^{\sigma}$-linear.
We show (N5).
To this end, take $a \in R_X$ and $b \in R$. Then
$( X \cdot a ) \cdot b = [a \in R_{\delta}^{\sigma}] = (a \cdot X) \cdot b
\stackrel{(N3)}{=}
a \cdot (X b) = a \cdot (\delta(b) + \sigma(b)X) =
a \delta(b) + a \sigma(b)X.$
Since $\sigma$ and $\delta$
are left $R_{\delta}^{\sigma}$-linear, we get that
$( X \cdot a) \cdot b = \delta(ab) + \sigma(ab)X = X \cdot (ab).$
\end{proof}

\begin{prop}\label{necessarypolynomial}
Every non-associative Ore extension of $R$ is isomorphic
to a generalized polynomial ring $R[X ; \sigma, \delta]$.
If the non-associative Ore extension is strong, then $\sigma$ is a fixed point homomorphism and $\delta$ is a $\sigma$-kernel derivation. 
\end{prop}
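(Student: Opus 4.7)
My plan is to reverse-engineer the pair $(\sigma,\delta)$ out of the axioms, define the candidate isomorphism via the basis, and then verify multiplicativity by an induction that only uses the middle and right nucleus properties provided by (N3).

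First I would use (N2) together with the basis property (N1): for each $a\in R$, there are unique elements of $R$, which I name $\delta(a)$ and $\sigma(a)$, such that
\[
xa = \delta(a) + \sigma(a)x.
\]
Uniqueness makes $\sigma,\delta:R\to R$ additive, and plugging in $a=1$ gives $\sigma(1)=1$ and $\delta(1)=0$. Now define $f:S\to R[X;\sigma,\delta]$ on the basis $\{1,x,x^2,\dots\}$ by $f(x^n)=X^n$ and extend left-$R$-linearly. By (N1) and Definition \ref{definitionpolynomials}, $f$ is automatically a bijection of left $R$-modules. Note also that because $x\in N_r(S)\cap N_m(S)$ by (N3), and these nuclei are (associative) subrings, every power $x^n$ lies in $N_r(S)\cap N_m(S)$; this is the only structural fact about the nuclei I will need.

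The main task is to show $f$ is multiplicative, i.e.\ that in $S$ one has
\[
(ax^m)(bx^n) = \sum_{i\in\N} a\,\pi_i^m(b)\,x^{i+n}
\]
for all $a,b\in R$ and $m,n\in\N$. I would induct on $m$. The base case $m=0$ reduces to $a(bx^n)=(ab)x^n$, which is $(a,b,x^n)=0$ and follows from $x^n\in N_r(S)$. For the inductive step I rewrite $ax^m=(ax^{m-1})x$ (using $(a,x^{m-1},x)=0$, since $x\in N_r(S)$), move the right factor across with $x\in N_m(S)$ to get $(ax^{m-1})(x\cdot bx^n)$, and then compute
\[
x(bx^n) = (xb)x^n = \delta(b)x^n + (\sigma(b)x)x^n = \delta(b)x^n + \sigma(b)x^{n+1},
\]
using $x^n\in N_r(S)$ and $x\in N_m(S)$ for the two associator rearrangements. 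Applying the inductive hypothesis to each summand and then invoking the recursion $\pi_i^m(b)=\pi_i^{m-1}(\delta(b))+\pi_{i-1}^{m-1}(\sigma(b))$ (obtained by splitting on whether the innermost of the $m$ operations is $\delta$ or $\sigma$) finishes the step. This shows $f$ is an isomorphism of non-associative rings.

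For the strong statement, I first observe that $a\in R_x$ iff $xa=ax$ iff $\delta(a)+\sigma(a)x=ax$, which by the uniqueness half of (N1) is equivalent to $\delta(a)=0$ and $\sigma(a)=a$; hence $R_x=R_\delta^\sigma$. Now suppose (N4) holds and take $a\in R$, $b\in R_\delta^\sigma$. Expanding both sides of $(xa)b = x(ab)$ and using $x\in N_m(S)\cap N_r(S)$ together with $xb=bx$ yields
\[
\delta(a)b + (\sigma(a)b)x \;=\; \delta(ab) + \sigma(ab)x,
\]
and uniqueness of coefficients gives $\delta(ab)=\delta(a)b$ and $\sigma(ab)=\sigma(a)b$, i.e.\ right $R_\delta^\sigma$-linearity of both maps. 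The case (N5) is completely symmetric: expanding $(xa)b=x(ab)$ for $a\in R_\delta^\sigma$, $b\in R$ instead pushes $a$ to the outside and reads off left $R_\delta^\sigma$-linearity. In either case, $\delta$ is a $\sigma$-kernel derivation and $\sigma$ is a fixed point homomorphism, as required.

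The step I expect to be the main obstacle is the multiplicativity induction: because (N3) only places $x$ in the middle and right nuclei (not the left), every rearrangement has to be justified by the specific associator that vanishes, and care is needed to match up the index shift from $\pi_{i-1}^{m-1}(\sigma(b))$ with the correct decomposition of $\pi_i^m$.
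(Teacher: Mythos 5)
Your proposal is correct and follows essentially the same route as the paper: read off $\sigma,\delta$ from $xa=\delta(a)+\sigma(a)x$ via (N1)--(N2), prove multiplicativity of $f$ by induction on $m$ using only $x\in N_m(S)\cap N_r(S)$ and the recursion $\pi_i^{m+1}(b)=\pi_i^m(\delta(b))+\pi_{i-1}^m(\sigma(b))$, and then extract one-sided $R_\delta^\sigma$-linearity of $\sigma$ and $\delta$ from (N4) or (N5) by comparing coefficients. The only differences are cosmetic (indexing of the induction and which of the two elements is taken in $R_x$ when expanding the strong axioms).
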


\begin{proof}
We first show the ''non-strong'' statement.
Suppose that $S$ is a non-associative Ore extension of $R$
defined by the element $x \in S$. Take $a,b \in R$.
By (N1) and (N2), we get that $xa = \delta(a) + \sigma(a)x$,
for some unique $\delta(a),\sigma(a) \in R$.
Hence this defines functions $\sigma : R \rightarrow R$ and 
$\delta : R \rightarrow R$.
By distributivity of $S$, we get the relation
$x(a+b) = xa + xb$ which implies that 
$\sigma(a+b) = \sigma(a) + \sigma(b)$ and
$\delta(a + b) = \delta(a) + \delta(b)$.
From the relation $x1 = x$ we get that $\sigma(1)=1$ and $\delta(1)=0$.
Define $f : S \rightarrow R[X ; \sigma, \delta]$ by the additive extension
of the relations $f( a x^m ) = a X^m$, for $a \in R$ and $m \in \mathbb{N}$.
Then clearly $f$ is an isomorphism of additive groups.
What is left to show is that $f$ respects multiplication.
Take $a,b \in R$ and $m,n \in \mathbb{N}$.
We claim that $(a x^m)(b x^n) = \sum_{i \in \mathbb{N}} a \pi_i^m(b) x^{i+n}$.
If we assume that the claim holds, then
$f( (a x^m)(b x^n) ) = 
f( \sum_{i \in \mathbb{N}} a \pi_i^m(b) x^{i+n} ) = 
\sum_{i \in \mathbb{N}} a \pi_i^m(b) X^{i+n} =
(a X^m) \cdot (b X^n) = f( ax^n ) \cdot f(b x^m)$.
Now we prove the claim by induction over $m$.

First we show the base case $m=0$.
By (N2) we get that $x \in N_r(S)$.
Therefore $x^n \in N_r(S)$ and hence we get that
$(a x^0)(b x^n) = a (b x^n) = (a b)x^n = a \pi_0^0(b) x^n =
\sum_{i \in \mathbb{N}} a \pi_i^0(b) x^{i+n}$.

Next we show the induction step.
Suppose that the claim holds for some $m \in \mathbb{N}$. 
By (N2), we get that $x \in N_m(S) \cap N_r(S)$.
Therefore all powers of $x$ also belong to $N_m(S) \cap N_r(S)$
and hence we get that
$(a x^{m+1}) (b x^n) = (a( x^m x)) (b x^n) = 
((a x^m)x) (b x^n) = (a x^m) ( x (b x^n) ) 
= (a x^m) ( (xb) x^n ) = (a x^m) ( ( \delta(b) + \sigma(b) x ) x^n ) =
(a x^m) ( \delta(b) x^n + \sigma(b) x^{n+1} ) 
= (a x^m)( \delta(b) x^n ) + (a x^m)( \sigma(b) x^{n+1} ).$
By the induction hypothesis the last expression equals
$\sum_{i \in \mathbb{N}} a \pi_i^m( \delta(b) ) x^{i+n} + 
\sum_{i \in \mathbb{N}} a \pi_i^m( \sigma(b) ) x^{i+n+1} = 
\sum_{i \in \mathbb{N}} a \pi_i^m( \delta(b) ) x^{i+n} + 
\sum_{i \in \mathbb{N}} a \pi_{i-1}^m( \sigma(b) ) x^{i+n} = 
\sum_{i \in \mathbb{N}} a [ \pi_i^m( \delta(b) ) + \pi_{i-1}^m( \sigma(b) ) ] x^{i+n} =
\sum_{i \in \mathbb{N}} a \pi_i^{m+1}( b ) x^{i+n}. $
This proves the induction step.
Now we show the ''strong'' statement.
To this end, take $a \in R_x$ and $b \in R$.
Suppose first that (N5) holds.
Then $x(ab) = (xa)b$. Thus, since $a \in R_x$, we get that
$\delta(ab) + \sigma(ab)x = (ax)b 
\stackrel{(N3)}{=}
a(xb) =
a (\delta(b) + \sigma(b)x) = a\delta(b) + a(\sigma(b)x)
\stackrel{(N3)}{=}
a \delta(b) + (a \sigma(b))x$.
Hence by (N1), we get that $\delta(ab)=a\delta(b)$ and $\sigma(ab)=a\sigma(b)$.
Suppose now that (N4) holds.
Then $x(ba) = (xb)a$. Thus
$\delta(ba) + \sigma(ba)x = (\delta(b) + \sigma(b)x)a =
\delta(b)a + (\sigma(b)x)a 
\stackrel{(N3)}{=}
\delta(b)a + \sigma(b)(xa) = 
[a \in R_x] = \delta(b)a + \sigma(b)(ax) 
\stackrel{(N3)}{=}
\delta(b)a + (\sigma(b)a)x$.
Hence, by (N1), we get that $\delta(ba)=\delta(b)a$ and $\sigma(ba) = \sigma(b)a$.
Thus, in either case, $\sigma$ is a fixed point homomorphism of $R$
and $\delta$ is a $\sigma$-kernel derivation of $R$.
\end{proof}

For use in later sections, we now note
that the axioms (N4) and (N5) of Definition \ref{defstrong}
can be replaced by 
seemingly
stronger statements.

\begin{prop}\label{generalaxioms}
Let $(S,x)$ be a non-associative Ore extension of $R$.

\begin{itemize}

\item[(a)] The axiom {\rm (N4)} holds if and only if 
$(\mathbb{Z}[x] , S , R_x[x]) = \{ 0 \}$ holds.

\item[(b)] The axiom {\rm (N5)} holds if and only if 
$(\mathbb{Z}[x] , R_x[x] , S) = \{ 0 \}$ holds.

\end{itemize}
\end{prop}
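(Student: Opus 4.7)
My plan is to exploit axiom (N3), which asserts $x \in N_m(S) \cap N_r(S)$; since each of $N_m(S)$ and $N_r(S)$ is an associative subring of $S$ (as noted in Section \ref{nonassociativeringtheory}), this immediately gives $\Z[x] \subseteq N_m(S) \cap N_r(S)$. The ``if'' direction of (a) is then immediate by specialising $p = x \in \Z[x]$, $s \in R \subseteq S$, and $q = a \in R_x \subseteq R_x[x]$ in $(\Z[x], S, R_x[x]) = \{0\}$ to recover (N4); the ``if'' direction of (b) is analogous. So the real content is each ``only if'' direction.

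For (a), I would assume (N4) and, by additivity of the associator in each slot, reduce to proving
\[
(x^i, r x^k, a x^j) = 0 \qquad (i,j,k \in \N,\ r \in R,\ a \in R_x).
\]
First I would record the auxiliary identity $a x^k = x^k a$ for every $a \in R_x$ and $k \in \N$, which follows by a short induction on $k$ using that $x$ lies in the middle and right nucleus. Armed with this, one can peel the outer powers $x^k$ and $x^j$ out of the associator by repeated application of the nucleus identities $(\cdot,\cdot,x^\ell)=0$ and $(\cdot,x^\ell,\cdot)=0$, combined with $a x^k = x^k a$. A routine book-keeping calculation then yields the clean reduction
\[
(x^i, r x^k, a x^j) = (x^i, r, a)\, x^{k+j},
\]
so the problem collapses to showing $(x^i, r, a) = 0$ for all $i \in \N$, $r \in R$, $a \in R_x$.

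This last step is where I expect the main subtlety to lie, and I would handle it by induction on $i$. The cases $i = 0$ (trivial) and $i = 1$ (which is precisely (N4)) form the base. For $i \geq 2$, I would plug $u = x^{i-1}$, $r = x$, an arbitrary $s \in R$, and $t = a$ into the associator identity $u(r,s,t) + (u,r,s)t + (u,rs,t) = (ur,s,t) + (u,r,st)$. Three of the five resulting associator terms vanish: one because $(x,s,a) = 0$ by (N4), and two more because $x \in N_m(S)$. What remains is the clean relation $(x^i, s, a) = (x^{i-1}, xs, a)$. Now (N2) rewrites $xs = \delta(s) + \sigma(s)\, x$ with $\delta(s), \sigma(s) \in R$, and feeding this into the reduction formula above (with $k=0$ and $k=1$) splits the right-hand side as $(x^{i-1}, \delta(s), a) + (x^{i-1}, \sigma(s), a)\, x$; both summands vanish by the inductive hypothesis. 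Part (b) follows by the completely symmetric argument: assume (N5), establish the mirrored reduction $(x^i, a x^k, r x^j) = (x^i, a, r)\, x^{k+j}$, and run the analogous induction on $(x^i, a, r)$ with (N5) replacing (N4).
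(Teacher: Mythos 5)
Your part (a) is correct, and it takes a genuinely different route from the paper: the paper reduces to $m=p=0$ and then invokes the expansion $x^n b = \sum_{i} \pi_i^n(b)x^i$ together with the right $R_x$-linearity of $\sigma$ and $\delta$ (both extracted from the proof of Proposition \ref{necessarypolynomial}), whereas your associator-identity induction on $(x^i,r,a)$ is self-contained and never needs the $\pi_i^n$ formula. Your reduction $(x^i, rx^k, ax^j) = (x^i,r,a)x^{k+j}$ does follow from $x\in N_m(S)\cap N_r(S)$ and $ax^\ell = x^\ell a$ alone, and the induction step closes as you describe.

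Part (b), however, is not ``completely symmetric,'' and as sketched it has a genuine gap. The asymmetry comes from the fact that $x$ lies in the middle and right nuclei but not (a priori) the left nucleus, and that only elements of $R_x$ commute with powers of $x$. Concretely, your mirrored reduction $(x^i, ax^k, rx^j) = (x^i,a,r)x^{k+j}$ cannot be established from the nucleus identities and $ax^k = x^k a$ alone: after writing $((x^i a)x^k)r = (x^i a)(x^k r)$ via $x^k \in N_m(S)$, you are stuck, because $x^k r \neq r x^k$ for general $r\in R$, so the power $x^k$ cannot be peeled out to the right. (In part (a) the corresponding step worked precisely because the element in the third slot was in $R_x$.) Worse, if you try to run the induction step anyway, the associator identity gives $(x^i,a,r) = (x^{i-1}, xa, r) = (x^{i-1}, ax, r)$, and a direct computation using (N5) shows $(x^{i-1}, ax, r) = (x^i, a, r)$ again -- so invoking the (unproved) mirrored reduction to replace this by $(x^{i-1},a,r)x$ hides a circularity. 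The repair is to push the extra $x$ into the \emph{third} slot rather than out to the right: using $x\in N_m(S)\cap N_r(S)$ and $xr = \delta(r)+\sigma(r)x$, one computes $(x^{i-1}, ax, r) = (x^{i-1}, a, \delta(r)) + (x^{i-1}, a, \sigma(r))\,x$, and both terms vanish by the inductive hypothesis (note the hypothesis must therefore be quantified over all of $R$ in the third slot, which it is). Only after $(x^m,a,r)=0$ is known for all $m$ can one handle general middle entries $ax^k$, via the identity $(x^i, ax^k, v) = (x^{i+k}, a, v) - x^i\,(x^k,a,v)$, whose terms then all vanish. With this modification part (b) goes through.
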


\begin{proof}
Since the ''if'' statements are trivial,
we only show the ''only if'' statements.
To this end, take $a \in R_x$, $b \in R$
and $m,n,p \in \mathbb{N}$.

(a) We need to show that $(x^n , b x^m , ax^p) = 0$.
Since $x \in N_m(S) \cap N_r(S)$ and $a \in R_x$
it is enough to show this relation for $m=p=0$.
Since (N4) holds, we get, from the proof of Proposition \ref{necessarypolynomial}, that
$(x^n b)a = \sum_{i \in \mathbb{N}} \pi_i^n(b) x^i a =
\sum_{i \in \mathbb{N}} \pi_i^n(b) a x^i =
\sum_{i \in \mathbb{N}} \pi_i^n(ba) x^i = x^n(ba).$

(b) We need to show that $(x^n , a x^p , b x^m) = 0$.
Since $x \in N_m(S) \cap N_r(S)$ and $a \in R_x$
it is enough to show this relation for $m=p=0$.
Since (N5) holds, we get, from the proof of Proposition \ref{necessarypolynomial}, that
$(x^n a)b = (a x^n)b = a (x^n b) = 
\sum_{i \in \mathbb{N}} a \pi_i^n(b) x^i =
\sum_{i \in \mathbb{N}} \pi_i^n(ab) x^i = x^n(ab).$
\end{proof}

\begin{prop}\label{newproof}
If $S = R[X ; \sigma , \delta]$ is a generalized polynomial ring, then 
\begin{itemize}
\item[(a)] $R \subseteq N_l(S)$ if and only if $R$ is associative;
\item[(b)] $X \in N_l(S)$ if and only if
$\sigma$ is a ring endomorphism and
$\delta$ is a $\sigma$-derivation;
\item[(c)] $S$ is associative if and only if
$R$ is associative, $\sigma$ is a ring endomorphism and
$\delta$ is a $\sigma$-derivation.
\end{itemize}
\end{prop}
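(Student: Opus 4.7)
The plan is to establish (a) and (b) by direct computation with the monomial product \eqref{productmonomials}, and then to assemble (c) using the general fact (recalled in Section~\ref{nonassociativeringtheory}) that $N_l(S)$ is always an associative subring of $S$.

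For (a), the ``only if'' direction is immediate: the assignment $r \mapsto rX^0$ embeds $R$ into $S$ and, by \eqref{productmonomials} with $m=n=0$, preserves the multiplication; hence $R \subseteq N_l(S)$ forces $(a,b,c)_R = 0$ for all $a,b,c \in R$. For the converse, I would assume $R$ is associative and, by additivity of the associator, reduce to checking the associator on monomials. A direct expansion via \eqref{productmonomials} gives
\[
(a \cdot bX^m) \cdot cX^n = \sum_{i \in \mathbb{N}} (ab)\pi_i^m(c) X^{i+n}, \qquad a \cdot (bX^m \cdot cX^n) = \sum_{i \in \mathbb{N}} a(b\pi_i^m(c)) X^{i+n},
\]
and these two coincide term by term because $R$ is associative.

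For (b), the ``only if'' direction is obtained by specializing $(X,b,c)$ to $b,c \in R$; a short computation using \eqref{productmonomials} yields
\[
(X,b,c) = \bigl[\delta(b)c + \sigma(b)\delta(c) - \delta(bc)\bigr] + \bigl[\sigma(b)\sigma(c) - \sigma(bc)\bigr]X,
\]
and forcing this to vanish (separating the $X^0$ and $X^1$ components) together with the standing assumption $\sigma(1)=1$ shows that $\sigma$ is a ring endomorphism and $\delta$ is a $\sigma$-derivation. For the converse, I would reduce by bilinearity to verifying $(X, bX^m, cX^n) = 0$ on monomials. Expanding both sides with \eqref{productmonomials} and matching coefficients of $X^{i+n}$, the problem reduces to the identity
\[
\delta(b)\pi_i^m(c) + \sigma(b)\pi_i^{m+1}(c) = \delta(b\pi_i^m(c)) + \sigma(b\pi_{i-1}^m(c))
\]
for every $i \in \mathbb{N}$. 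This follows by expanding the right-hand side with the ring-endomorphism and $\sigma$-derivation identities, and then collapsing the result via the recursion $\pi_i^{m+1}(c) = \delta(\pi_i^m(c)) + \sigma(\pi_{i-1}^m(c))$ built into the definition of the $\pi_i^m$. This combinatorial manipulation is the main obstacle of the argument.

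For (c), the forward implication is immediate: if $S$ is associative then $N_l(S) = S$ contains both $R$ and $X$, so parts (a) and (b) deliver the three required conclusions. For the converse, parts (a) and (b) place $R \cup \{X\}$ inside $N_l(S)$; since $N_l(S)$ is an associative subring of $S$, it is closed under multiplication and therefore contains every product $a \cdot X^n$ with $a \in R$ and $n \in \mathbb{N}$. A direct check with \eqref{productmonomials} shows that $X \cdot X^n = X^{n+1}$ and $aX^0 \cdot X^n = aX^n$, identifying these products with the basis elements of Definition~\ref{definitionpolynomials}, so additive closure yields $N_l(S) \supseteq S$ and hence $S$ is associative.
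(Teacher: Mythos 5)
Your proposal is correct, and it follows the paper's overall skeleton — prove (a) and (b) separately, then assemble (c) from the fact that $N_l(S)$ is an associative subring closed under addition and multiplication, so that $R\cup\{X\}\subseteq N_l(S)$ forces $S\subseteq N_l(S)$; that last step is identical to the paper's. Where you diverge is in the engine used for the ``if'' directions of (a) and (b). The paper never touches the coefficients $\pi_i^m$ at this stage: it exploits the already-established fact that $X\in N_m(S)\cap N_r(S)$ (axiom (N3), from Proposition~\ref{sufficientpolynomial}) to slide powers of $X$ through the associator, e.g.\ $(a,bX^m,cX^n)=(a,b,X^mc)X^n=\sum_i(a,b,\pi_i^m(c))X^i X^n$, reducing everything to associators of elements of $R$ (respectively to $(X,R,R)$ in (b)). You instead expand both sides of the associator directly via \eqref{productmonomials} and match coefficients, which lands you on the identity $\delta(b)\pi_i^m(c)+\sigma(b)\pi_i^{m+1}(c)=\delta(b\pi_i^m(c))+\sigma(b\pi_{i-1}^m(c))$; this does hold, via the outermost-map recursion $\pi_i^{m+1}=\delta\circ\pi_i^m+\sigma\circ\pi_{i-1}^m$ together with the endomorphism and $\sigma$-derivation laws (and the conventions $\pi_{-1}^m=\pi_{m+1}^m=0$ at the boundary indices). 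Your route is self-contained and makes the combinatorial content explicit — essentially re-proving a piece of what (N3) already encodes — while the paper's route is shorter because it recycles the nucleus properties of $X$ proved earlier. Both are valid; your ``only if'' computations in (a) and (b) agree with the paper's.
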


\begin{proof}
(a) The ''only if'' statement is clear.
Now we show the ''if'' statement.
Suppose that $R$ is associative.
Take $a,b,c \in R$ and $m,n \in \mathbb{N}$.
We wish to show that 
\begin{equation}\label{associativeabc}
(a , bX^m , cX^n)=0.
\end{equation}
Since $X \in N_r(S)$, we get that 
$(a , bX^m , cX^n) = (a , bX^m , c)X^n$.
Thus it is enough to prove \eqref{associativeabc} for $n=0$. 
Since $X \in N_m(S) \cap N_r(S)$ we get that
$(a , bX^m , c) = (a , b , X^m c) = 
\sum_{i \in \mathbb{N}} (a , b , \pi_i^m(c) X^i) = 
\sum_{i \in \mathbb{N}} (a , b , \pi_i^m(c) ) X^i = 0$,
using that $R$ is associative.

(b) First we show the ''only if'' statement.
Suppose that $X \in N_l(S)$. Take $a,b \in R$.
From the equality $X(ab) = (Xa)b$ we get that
$\delta(ab) + \sigma(ab)X = 
(\delta(a) + \sigma(a)X)b 
\stackrel{(N3)}{=}
\delta(a)b + \sigma(a) (Xb) = 
\delta(a)b + \sigma(a)( \delta(b) + \sigma(b)X ) 
\stackrel{(N3)}{=}
\delta(a)b + \sigma(a)\delta(b) + (\sigma(a)\sigma(b))X$.
Hence, by (N1), we get that 
$\sigma$ is a homomorphism and $\delta$ is a $\sigma$-derivation.
Now we show the ''if'' statement.
Suppose that $\sigma$ is a homomorphism and 
that
$\delta$ is a $\sigma$-derivation.
From the calculation in the proof of the ''only if''
statement it follows that $X \in N_l(R)$.
From the same type of reasoning that we used in the proof
of the ''if'' statement in (a), we therefore get that
$(X,S,S) \subseteq \sum_{i \in \mathbb{N}} (X,R,R)X^i = \{0\}$.

(c) The ''only if'' statement follows directly from (a) and (b).
Now we show the ''if'' statement.
Suppose that $R$ is associative, $\sigma$ is a ring endomorphism and
that
$\delta$ is a $\sigma$-derivation.
Take $a \in R$ and $m \in \mathbb{N}$.
From (a) and (b) we get that $a,X \in N_l(S)$.
Since $N_l(S)$ is multiplicatively closed we get that
$aX^m \in N_l(S)$. Since $N_l(S)$ is closed under addition,
we get that $S \subseteq N_l(S)$
and thus $S$ is associative.
\end{proof}

\begin{prop}\label{rightbasis}
If $S = R[X ; \sigma , \delta]$ is a generalized polynomial ring
with $\sigma$ bijective, then $B = \{ X^n \}_{n \in \mathbb{N}}$
is a basis for $S$ as a right $R$-module. 
\end{prop}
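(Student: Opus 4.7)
The plan is to establish the two defining properties of a basis separately: first that the monomials $X^n$ span $S$ as a right $R$-module, then that right $R$-linear combinations of them are unique. By additivity, it suffices for spanning to show that every monomial $aX^m$ (with $a\in R$, $m\in\mathbb{N}$) is a finite right $R$-linear combination $\sum_{i=0}^{m} X^i b_i$.

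For the spanning step I would induct on $m$. The case $m=0$ is trivial since $a=X^0\cdot a$. For the inductive step, note that by (N3) we have $X\in N_m(S)\cap N_r(S)$, and therefore so does every power of $X$. Writing $aX^m = (aX^{m-1})X$ via $X\in N_r(S)$, and applying the induction hypothesis to get $aX^{m-1}=\sum_{i=0}^{m-1} X^i c_i$, reduces the problem to expressing each $X^i(c_i X)$ in right basis form. Since $\sigma$ is bijective, the defining relation $Xr=\delta(r)+\sigma(r)X$ can be solved to yield the key identity
\begin{equation*}
c_i X \;=\; X\sigma^{-1}(c_i) \;-\; \delta(\sigma^{-1}(c_i)),
\end{equation*}
and after reassociating via $X^i\in N_m(S)$ this gives $X^i(c_iX)=X^{i+1}\sigma^{-1}(c_i)-X^i\delta(\sigma^{-1}(c_i))$, which has the required form.

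For uniqueness, suppose $\sum_{i=0}^{N} X^i b_i = 0$ with $b_N \ne 0$. The idea is to re-expand each $X^i b_i$ in the known left $R$-basis using the multiplication formula \eqref{productmonomials}, which gives $X^i b_i = \sum_{j=0}^{i} \pi_j^i(b_i) X^j$. The only contribution to the coefficient of $X^N$ comes from $i=N$, and it equals $\pi_N^N(b_N)=\sigma^N(b_N)$. By freeness of the left $R$-module structure, $\sigma^N(b_N)=0$, so bijectivity of $\sigma$ forces $b_N=0$, a contradiction; downward induction on $N$ then kills every $b_i$.

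The main obstacle I foresee is strictly bookkeeping: since $S$ is not assumed to be associative, every rebracketing in the induction step must be justified by invoking the nuclear membership $X\in N_m(S)\cap N_r(S)$ provided by (N3), and similarly for powers of $X$. Bijectivity of $\sigma$ enters essentially in two places, namely in solving for $c_iX$ above (existence) and in concluding $b_N=0$ from $\sigma^N(b_N)=0$ (uniqueness); without it, neither step goes through.
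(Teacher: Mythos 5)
Your proof is correct and uses essentially the same ideas as the paper: uniqueness follows by reading off the top coefficient $\sigma^N(b_N)$ in the left basis and invoking injectivity of $\sigma$, and spanning follows from surjectivity of $\sigma$ to move coefficients across powers of $X$. The only cosmetic difference is in the spanning induction — the paper subtracts $X^{n+1}r$ with $\sigma^{n+1}(r)=a_{n+1}$ to kill the top term in one stroke, whereas you peel off one factor of $X$ at a time via $c_iX = X\sigma^{-1}(c_i)-\delta(\sigma^{-1}(c_i))$ — and your care with rebracketing via $X\in N_m(S)\cap N_r(S)$ is exactly what is needed.
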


\begin{proof}
First we show that $B$ is a right $R$-linearly independent set.
We will show that for each $n \in \mathbb{N}$, the set 
$B_n := \{ X^i \}_{i=0}^n$ is right $R$-linearly independent. 
We will prove this by induction over $n$.
Base case: $n=0$. It is clear that $\{ 1 \}$ is right 
$R$-linearly independent.
Induction step: suppose that $B_n$ is right $R$-linearly independent
for some $n \in \mathbb{N}$.
Suppose that $a_i \in R$, for $i \in \{1 ,\ldots , n+1\}$, 
are chosen so that $\sum_{i=0}^{n+1} X^i a_i = 0$.
Then $0 = \sigma^{n+1}(a_{n+1}) X^{n+1} + \mbox{[lower terms]}$.
Since $B_{n+1}$ is left $R$-linearly independent, 
we get that $\sigma^{n+1}(a_{n+1}) = 0$.
Since $\sigma$ is injective, we get that $a_{n+1}=0$.
Thus $\sum_{i=0}^{n} X^i a_i = 0$.
By the induction hypothesis, we get that $a_i = 0$,
for $i \in \{0,\ldots,n\}$.

Next we show that $B$ right $R$-spans $S$.
For each $n \in \mathbb{N}$, let $S_n$ (or $T_n$) denote
the left (or right) $R$-span of $B_n$.
We will show that for each $n \in \mathbb{N}$, the 
relation $S_n = T_n$ holds.
We will prove this by induction over $n$.
Base case: $n=0$. It is clear that $S_0 = R = T_0$.
Induction step: suppose that $S_n = T_n$ for some
$n \in \mathbb{N}$.
Take $a = \sum_{i=0}^{n+1} a_i X^i \in S_{n+1}$.
Since $\sigma$ is surjective, we can pick $r \in R$
such that $\sigma^{n+1}(r) = a_{n+1}$.
This implies that $a - X^{n+1} r \in S_n$.
By the induction hypothesis this implies that 
$a - X^{n+1} r \in T_n$. Thus $a \in T_n + X^{n+1}r \subseteq T_{n+1}$.
Thus $S_{n+1} \subseteq T_{n+1}$.
Since the inclusion $S_{n+1} \supseteq T_{n+1}$ trivially holds,
the induction step is complete.
\end{proof}

Explicit formulas for how elements of generalized polynomial 
rings can be expressed as right $R$-linear combinations of 
elements from $B$
can be worked out exactly as in the classical case 
(see e.g. the formulas right after Theorem 7 in 
Ore's classical article \cite{ore1933}).
In this article, we only need the following special case of these relations.

\begin{prop}\label{rightformula}
Suppose that $S = R[X ; \id_R , \delta]$ is a non-associative 
differential polynomial ring.
If $r \in R$ and $n \in \mathbb{N}$, then
$r X^n = \sum_{i=0}^n (-1)^i {n \choose i} X^{n-i} \delta^i(r)$.
\end{prop}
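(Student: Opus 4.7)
The plan is to prove this by induction on $n$, using (N3) which places $X$ in both the middle and right nuclei $N_m(S) \cap N_r(S)$ of $S$, so that any associator involving $X$ in the second or third slot vanishes. The only other tool needed is the defining commutation rule $X \cdot s = \delta(s) + \sigma(s) X = \delta(s) + sX$ (valid for all $s \in R$, since $\sigma = \id_R$), rewritten as $sX = Xs - \delta(s)$.

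The base case $n = 0$ is immediate since both sides equal $r$; for $n = 1$, the commutation rule directly gives $rX = Xr - \delta(r)$, which is the claimed formula with only two terms.

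For the induction step from $n$ to $n+1$, I would first use the vanishing associator $(r, X^n, X) = 0$ (from $X \in N_r(S)$) to obtain $rX^{n+1} = (rX^n) \cdot X$, then apply the induction hypothesis to rewrite the right-hand side as $\sum_{i=0}^n (-1)^i \binom{n}{i} (X^{n-i}\delta^i(r)) \cdot X$. The key calculation is simplifying each summand: by $X \in N_r(S)$ one has $(X^{n-i}\delta^i(r)) X = X^{n-i}(\delta^i(r) X)$; applying the commutation rule to $s = \delta^i(r)$ gives $\delta^i(r) X = X \delta^i(r) - \delta^{i+1}(r)$; and finally $X \in N_m(S)$ permits $X^{n-i}(X \delta^i(r)) = X^{n+1-i} \delta^i(r)$. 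Combining, each summand becomes $X^{n+1-i}\delta^i(r) - X^{n-i}\delta^{i+1}(r)$.

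Substituting back, splitting into two sums, and re-indexing the second via $j = i + 1$, the coefficient of $X^{n+1-i}\delta^i(r)$ assembles as $(-1)^i \bigl[\binom{n}{i} + \binom{n}{i-1}\bigr]$ (with the usual convention that binomial coefficients with out-of-range lower index are zero), which by Pascal's rule equals $(-1)^i \binom{n+1}{i}$. This delivers the claim for $n+1$. The whole argument is essentially bookkeeping; the only subtlety is tracking which occurrence of $X$ justifies which reassociation, but because (N3) places $X$ in both $N_m(S)$ and $N_r(S)$ every required associator vanishes, so there is no genuine obstacle.
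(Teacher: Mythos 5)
Your proof is correct and follows essentially the same route as the paper's: induction on $n$, reassociation justified by $X \in N_m(S) \cap N_r(S)$, the commutation rule $\delta^i(r)X = X\delta^i(r) - \delta^{i+1}(r)$, and Pascal's rule to collect coefficients. The only difference is that you track the individual vanishing associators more explicitly than the paper does, which is harmless.
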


\begin{proof}
We will show this by induction over $n$.
Base case: $n=0$. This is clear since $r X^0 = r = X^0 r$.
Induction step: suppose that 
$r X^n = \sum_{i=0}^n (-1)^i {n \choose i} X^{n-i} \delta^i(r)$
for some $n \in \mathbb{N}$.
Then, since $X \in N_m(S) \cap N_r(S)$, we get that 
$r X^{n+1} = r X^n X = 
\sum_{i=0}^n (-1)^i {n \choose i} X^{n-i} \delta^i(r) X =
\sum_{i=0}^n (-1)^i {n \choose i} X^{n-i} ( X \delta^i(r) - \delta^{i+1}(r) ) =  
\sum_{i=0}^n (-1)^i {n \choose i} X^{n+1-i} \delta^i(r) +
(-1)^{i+1} {n \choose i} X^{n-i} \delta^{i+1}(r) = 
[ {n+1 \choose i} = {n \choose i} + {n \choose i-1} ]=  
\sum_{i=0}^{n+1} (-1)^i {n+1 \choose i} X^{n+1-i} \delta^i(r)$
\end{proof}

\section{Ideal Structure}\label{simplicity}

The aim of this section is to prove Theorem \ref{maintheorem}.
To this end, we first show
a series of results concerning
simplicity and the center. 
Throughout this section, $R$ denotes a non-associative ring
and $\sigma$ and $\delta$ are additive maps $R \rightarrow R$
satisfying $\sigma(1)=1$ and $\delta(1)=0$. 
Furthermore, we let $S = R[X ; \sigma , \delta]$ denote a 
non-associative Ore extension of $R$.

\begin{defn}\label{defsigmadeltasimple}
An ideal $I$ of $R$ is said to be
\emph{$\sigma$-$\delta$-invariant}
if $\sigma(I) \subseteq I$ and $\delta(I) \subseteq I$.
If $\{ 0 \}$ and $R$ are the only $\sigma$-$\delta$-invariant ideals
of $R$, then 
$R$ is said to be \emph{$\sigma$-$\delta$-simple}.
\end{defn}

\begin{prop}\label{sigmadeltasimple}
If $S$ is simple, then $R$ is $\sigma$-$\delta$-simple.
\end{prop}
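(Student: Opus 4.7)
The plan is to argue by contrapositive: assume that $R$ is not $\sigma$-$\delta$-simple, so there exists a proper nonzero $\sigma$-$\delta$-invariant ideal $I$ of $R$, and construct from $I$ a proper nonzero ideal of $S$, contradicting simplicity of $S$.

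The natural candidate is the subset $J \subseteq S$ consisting of all formal polynomials whose coefficients all lie in $I$; symbolically, $J = \{ f \in S \mid f(n) \in I \text{ for all } n \in \mathbb{N} \}$. This is clearly an additive subgroup of $S$; it is nonzero because $I \neq \{0\}$, and it is a proper subset of $S$ because $1 \notin I$ (since otherwise $I = R$), so $1_S = X^0 \notin J$.

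The main task is to verify that $J$ is a two-sided ideal of $S$. By biadditivity of the product together with the formula \eqref{productmonomials}, it suffices to show that for any $a, b \in R$ and $m, n \in \mathbb{N}$, the product $(aX^m)(bX^n) = \sum_{i} a\,\pi_i^m(b)\, X^{i+n}$ lies in $J$ whenever either $a \in I$ or $b \in I$. If $a \in I$, then since $I$ is a two-sided ideal of $R$, each coefficient $a\,\pi_i^m(b) \in I$. If $b \in I$, one uses that each $\pi_i^m$, being a sum of compositions of $\sigma$ and $\delta$, maps $I$ into $I$ by the $\sigma$-$\delta$-invariance of $I$; hence $\pi_i^m(b) \in I$, and again $a\,\pi_i^m(b) \in I$ because $I$ is an ideal of $R$.

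The one small obstacle is therefore book-keeping: one must confirm that each $\pi_i^m$ preserves $I$. This follows by a straightforward induction on $m$ from the hypotheses $\sigma(I) \subseteq I$ and $\delta(I) \subseteq I$ together with the recursive description of $\pi_i^m$ as a sum whose summands each start with one outer application of either $\sigma$ or $\delta$ to $\pi_{i-1}^{m-1}(b)$ or $\pi_i^{m-1}(b)$. Once this is in hand, $J$ is a proper nonzero ideal of $S$, contradicting the simplicity hypothesis, and the proposition follows.
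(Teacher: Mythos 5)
Your proof is correct and is essentially the paper's own argument: the authors likewise take a non-zero $\sigma$-$\delta$-invariant ideal $J$ of $R$ and show that $\oplus_{i\in\mathbb{N}} J X^i$ (your set of polynomials with all coefficients in the ideal) is a two-sided ideal of $S$, using exactly the observation that each $\pi_i^m$ preserves the ideal by $\sigma$-$\delta$-invariance. The only difference is cosmetic: they phrase it directly (the ideal must be all of $S$, hence $J=R$) rather than by contrapositive.
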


\begin{proof}
Take a non-zero $\sigma$-$\delta$-invariant ideal $J$ of $R$.
We wish to show that $J = R$.
Let $I = \oplus_{i \in \mathbb{N}} J X^i$.
Since $J$ is a right ideal of $R$ it follows that $I$ is a right ideal of $S$.
Using that $J$ is $\sigma$-$\delta$-invariant it follows that $I$ is a left ideal of $S$.
Since $J$ is non-zero it follows that $I$ is non-zero.
By simplicity of $S$, we get that $I=S$ and thus $J = R$.
\end{proof}

\begin{prop}\label{R-sigmadelta-simple-Z-field}
Suppose that $R$ is $\sigma$-$\delta$-simple.
If $\sigma$ is a fixed point homomorphism and $\delta$
is a $\sigma$-kernel derivation, then $Z(R)_{\delta}^{\sigma}$ is a field.
\end{prop}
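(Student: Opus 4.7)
The plan is to mimic the proof of Proposition \ref{centerfield}, but now tracking how $\sigma$ and $\delta$ interact with a principal ideal generated by a central element. First I would check that $Z(R)_\delta^\sigma$ is a commutative, unital, associative subring of $R$: it is an additive subgroup containing $1$, and closed under multiplication because if $a,b \in Z(R)_\delta^\sigma$, then $ab \in Z(R)$, while (say) right $R_\delta^\sigma$-linearity gives $\sigma(ab) = \sigma(a)b = ab$ and $\delta(ab) = \delta(a)b = 0$; the left-linear case is symmetric.

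Next, take a nonzero $r \in Z(R)_\delta^\sigma$ and form the set $Rr$. Since $r \in Z(R) \subseteq N(R) \cap C(R)$, standard associator manipulations (using $r \in N_r(R)$ for left multiplication, $r \in N_m(R) \cap C(R)$ for right multiplication) show that $Rr$ is a two-sided ideal of $R$ and that $Rr = rR$. I would then show that $Rr$ is $\sigma$-$\delta$-invariant: in the right-linear case, $\sigma(ar) = \sigma(a)r \in Rr$ and $\delta(ar) = \delta(a)r \in Rr$, using $r \in R_\delta^\sigma$; in the left-linear case, the same works starting from $rR$. Since $r = 1 \cdot r \in Rr$, the ideal $Rr$ is nonzero, so by $\sigma$-$\delta$-simplicity of $R$ we get $Rr = R$, and hence there exists $s \in R$ with $sr = 1$.

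Finally I would upgrade $s$ to an element of $Z(R)_\delta^\sigma$. Proposition \ref{centerInvClosed} immediately yields $s \in Z(R)$; in particular $s$ and $r$ commute and we also have $rs = 1$. Applying $\sigma$ to $sr = 1$ and using right $R_\delta^\sigma$-linearity gives $\sigma(s)r = 1 = sr$, so $(\sigma(s)-s)r = 0$; right-multiplying by $s$ and using that $r \in N_m(R)$ to reassociate $((\sigma(s)-s)r)s = (\sigma(s)-s)(rs) = \sigma(s)-s$, we conclude $\sigma(s) = s$. An identical computation with $\delta$ gives $\delta(s) = 0$. In the left-linear case, one applies $\sigma$ and $\delta$ to $rs = 1$ and uses $r \in N_m(R)$ in the same way. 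Thus $s \in Z(R)_\delta^\sigma$ is a multiplicative inverse for $r$, completing the proof.

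The only genuine subtlety is making sure that all the re-associations performed on $r$ (turning $Rr$ into a two-sided ideal, cancelling $r$ after applying $\sigma$ or $\delta$) are legal in a non-associative setting; this is where the hypothesis $r \in Z(R) \subseteq N(R)$ does all the work, and I expect this to be the main thing to keep honest throughout the write-up.
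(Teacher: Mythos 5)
Your proposal is correct and follows essentially the same route as the paper's proof: show $Z(R)_\delta^\sigma$ is a commutative unital ring via the right (or left) $R_\delta^\sigma$-linearity of $\sigma$ and $\delta$, observe that $Rr$ is a nonzero $\sigma$-$\delta$-invariant ideal so that simplicity produces an inverse $s$, and then use Proposition \ref{centerInvClosed} together with linearity over $R_\delta^\sigma$ to place $s$ in $Z(R)_\delta^\sigma$. The only cosmetic difference is at the last step, where the paper computes $\sigma(s)=\sigma(s)(rs)=\sigma(sr)s=s$ directly while you cancel $r$ from $(\sigma(s)-s)r=0$; both rest on the same reassociations justified by $r\in N(R)$.
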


\begin{proof}
Put $T = Z(R)_{\delta}^{\sigma}$.
We already know that $Z(R)$ is an associative commutative unital ring.
Suppose that $\sigma$ and $\delta$ are right $R_{\delta}^{\sigma}$-linear.
Take $a,b \in T$.
We have
$\sigma(ab) = \sigma(a)b = ab$ and
$\delta(ab) = \delta(a)b = 0b = 0$.
Thus $ab \in T$.
Since it is clear that $1 \in T$ and that
$T$ is additively closed, it follows that
$T$ is an associative commutative unital ring.
What remains to show is that every non-zero element of $T$ 
has a multiplicative inverse.
To this end, take a non-zero $a \in T$.
Then $Ra$ is a non-zero ideal of $R$ with
$\sigma(Ra) = \sigma(R)a \subseteq Ra$ and 
$\delta(Ra) = \delta(R)a \subseteq Ra$.
Hence $Ra$ is $\sigma$-$\delta$-invariant.
By $\sigma$-$\delta$-simplicity of $R$,
we get that $Ra = R$. Thus, there is $b \in R$
such that $ab = 1$.
By Proposition \ref{centerInvClosed}, we get that $b \in Z(R)$.
Now we show that $b \in R_{\delta}^{\sigma}$.
Indeed,
$\sigma(b)=\sigma(b)1=\sigma(b)ab=\sigma(ba)b=\sigma(1)b=1b=b$
and
$\delta(b) = \delta(b)1 = \delta(b)ab = \delta(ba)b =
\delta(1)b = 0b = 0$.
This shows that $b\in T$.
The left $R_{\delta}^{\sigma}$-linear case is treated analogously.
\end{proof}

\begin{prop}\label{commutativecondition}
If $a \in R_{\delta}^{\sigma}[X]$ commutes with 
every element of $R$, then $a \in C(S)$. 
\end{prop}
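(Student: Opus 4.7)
The plan is to reduce the assertion $a \in C(S)$ to checking that $a$ commutes with the spanning monomials $rX^n$, and then to carry out the necessary reassociations using the nucleus information supplied by axiom (N3): $X \in N_m(S) \cap N_r(S)$. Since each nucleus is an associative subring of $S$, this membership passes at once to every power $X^n$; it will be important that no step needs $X \in N_l(S)$.

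Write $a = \sum_i a_i X^i$ with $a_i \in R_{\delta}^{\sigma}$. I would first show $Xa = aX$. The only new ingredient is that $X a_i = \delta(a_i) + \sigma(a_i) X = a_i X$, since by hypothesis $a_i$ is fixed by $\sigma$ and killed by $\delta$; everything else is just moving coefficients past $X^i$, which is justified because $X$ lies in $N_m(S) \cap N_r(S)$. An easy induction on $n$, again using only $N_m$ and $N_r$, then upgrades this to $X^n a = a X^n$ for all $n \in \mathbb{N}$.

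With $a$ known to commute with every power of $X$ and, by hypothesis, with every element of $R$, I would finish by the four-step rearrangement
\[
(r X^n)\, a \;=\; r(X^n a) \;=\; r(a X^n) \;=\; (ra) X^n \;=\; (ar) X^n \;=\; a (r X^n),
\]
valid for arbitrary $r \in R$ and $n \in \mathbb{N}$, where the associators that must vanish are covered by $X^n \in N_m(S) \cap N_r(S)$ and the middle equality uses $[a,r]=0$. Biadditivity of multiplication then extends $[a,s]=0$ to all $s \in S$.

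The only subtlety, and the main thing to keep honest, is the asymmetry between the three nuclei: (N3) places $X$ in the middle and right nucleus but not in the left one. Each reassociation needed above, however, positions $X$ or $X^n$ in the middle or rightmost slot of the associator, so (N3) is exactly what is required. I therefore do not foresee any real obstacle beyond this bookkeeping.
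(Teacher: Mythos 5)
Your proof is correct and follows essentially the same route as the paper's: establish $[a,X^n]=0$ by induction using $a_i \in R_{\delta}^{\sigma}$ and the fact that $X$ (hence $X^n$) lies in $N_m(S)\cap N_r(S)$, then reassociate $(rX^n)a$ and $a(rX^n)$ through the middle and right nuclei together with $[a,r]=0$. Your explicit remark that no step requires $X\in N_l(S)$ is exactly the bookkeeping the paper's computation relies on.
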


\begin{proof}
First we show, using induction, that, for
every $n \in \mathbb{N}$, the relation $[a,x^n]=0$ holds.
The base case $n=0$ follows immediately 
since $[a,X^0] = [a , 1] = 0$.
Now we show the induction step.
Suppose that $[a,X^n]=0$ for some $n \in \mathbb{N}$. Then 
$[a,X^{n+1}] = 
a X^{n+1} - X^{n+1} a =
a ( X X^n) - (X X^n) a =
(a X) X^n - X (X^n a) =
(a X) X^n - X (a X^n) =
(a X) X^n - (X a) X^n =
[a,X]X^n = 0$,
since it follows from $a \in R_{\delta}^{\sigma}[X]$ that $[a,X]=0$. Now 
$[a,bX^n] = 
a(bX^n) - (bX^n)a =
(ab)X^n - b(X^n a) =
(ab)X^n - b(a X^n) =
(ab)X^n - (ba)X^n = 
[a,b]X^n = 0$.
\end{proof}

\begin{prop}\label{associativecondition}
Suppose that $S$ is a strong non-associative Ore extension of $R$.
If $a \in R_{\delta}^{\sigma}[X]$
commutes with every element of $R$, and 
associates with all elements of $R$, then $a \in Z(S)$.
\end{prop}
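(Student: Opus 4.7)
My plan is to use Proposition \ref{commutativecondition} to first obtain $a \in C(S)$, and then to verify that $a$ lies in two of the three nuclei of $S$, so that Proposition \ref{intersection} will give $a \in Z(S)$.

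Step one is to show $a \in N_l(S)$, which turns out not to require strongness. For $b, c \in R$ and $m, n \in \mathbb{N}$, the axiom (N3) makes $X^k \in N_m(S) \cap N_r(S)$, which together with the product formula \eqref{productmonomials} reduces both $(a \cdot bX^m)(cX^n)$ and $a \cdot ((bX^m)(cX^n))$ to explicit sums of monomials in $X$. These agree because the associating hypothesis on $a$ gives $a(b \pi_i^m(c)) = (ab) \pi_i^m(c)$ for each $i$, forcing $(a, bX^m, cX^n) = 0$.

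Step two is to show $a \in N_r(S)$ (when (N4) holds) or $a \in N_m(S)$ (when (N5) holds). In case (N4), Proposition \ref{generalaxioms}(a) gives $(\mathbb{Z}[X], S, R_{\delta}^{\sigma}[X]) = \{0\}$, so $(X^m, s, a) = 0$ for every $s \in S$. Two applications of the associator identity, together with $X^k \in N_m(S) \cap N_r(S)$, reduce $(bX^m, cX^n, a)$ to an expression in which each remaining associator is of one of three types: $(br, X^k, a)$ (vanishing by (N4)); $b(r, X^k, a)$ (vanishing because $(r, X^k, a) = (ar - ra)X^k = [a, r]X^k = 0$ via $a \in C(S)$ and $X^k \in N_r(S)$); or $(b, r, X^k a)$. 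For the last type, writing $X^k a = \sum_l a_l X^{k+l}$, valid because each $a_l \in R_{\delta}^{\sigma}$ commutes with $X$, gives $(b, r, X^k a) = \sum_l (b, r, a_l) X^{k+l}$; the scalars $(b, r, a_l)$ individually vanish because the identity $(b, r, a) = \sum_l (b, r, a_l) X^l = 0$ combined with the left $R$-basis property of $\{X^l\}$ forces the coefficients to be zero. Thus $a \in N_r(S)$. The case (N5) is entirely symmetric: Proposition \ref{generalaxioms}(b) supplies $(X^m, a, s) = 0$; the associator identity, combined with $X^m a = aX^m$ from $a \in C(S)$, reduces $(bX^m, a, cX^n)$ first to $(b, aX^m, cX^n)$ and then to $(b, a, X^m \cdot cX^n)$, which expands to sums of terms $(b, a, rX^k)$ for $r \in R$; the analogous coefficient extraction from the identity $(b, a, r) = 0$ shows each such term vanishes, so $a \in N_m(S)$.

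The main obstacle is the amount of algebraic bookkeeping: several nested applications of the associator identity are needed, and at each step one must identify exactly which hypothesis forces a particular intermediate associator to vanish --- whether (N3), the strong axiom via Proposition \ref{generalaxioms}, the centrality $a \in C(S)$, or the $R$-associating hypothesis together with the freeness of $\{X^n\}$ as a left $R$-basis.
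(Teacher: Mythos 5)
Your proposal follows the same overall architecture as the paper's proof: Proposition \ref{commutativecondition} gives $a \in C(S)$, membership in $N_l(S)$ is established without using strongness, and Proposition \ref{generalaxioms} combined with Proposition \ref{intersection} handles the (N4) and (N5) cases by placing $a$ in $N_r(S)$ or $N_m(S)$ respectively. The difference is mainly organizational: the paper writes two parallel chains of equalities for $((bX^n)(cX^p))a$ and $(bX^n)((cX^p)a)$, whereas you decompose $(bX^m,cX^n,a)$ via the associator identity into elementary associators and kill each one separately. The two amount to the same computation, and your coefficient-extraction step (using freeness of $\{X^l\}$ as a left $R$-basis to deduce $(b,r,a_l)=0$ from $(b,r,a)=0$) actually makes explicit a point the paper leaves implicit, namely why $(b,X^n c,a)=0$ at the end of its two chains.

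One justification in your case-(N4) list is misattributed. The associator $(br,X^k,a)$ does not vanish by (N4): Proposition \ref{generalaxioms}(a) yields $(\mathbb{Z}[X],S,R_{\delta}^{\sigma}[X])=\{0\}$, i.e.\ it controls associators with the power of $X$ in the \emph{first} slot, whereas here $X^k$ sits in the middle. The term does vanish, but for the same reasons as your second type: since $a\in R_{\delta}^{\sigma}[X]$ one has $X^k a = aX^k$, and then $a\in C(S)$ together with $X^k\in N_r(S)$ gives $((br)X^k)a = (a(br))X^k = ((br)a)X^k = (br)(aX^k) = (br)(X^k a)$. The genuine use of (N4) in this case is the term $(X^m,cX^n,a)$ (a power of $X$ in the first slot), which you correctly identify at the outset. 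With that correction the argument is complete.
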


\begin{proof}
By Proposition~\ref{commutativecondition} we conclude that $a\in C(S)$.
Since $Z(S) = C(S) \cap N(S)$,
we need to show that $a \in N(S)$.
First we show that $a \in N_l(S)$.
Take $n,p \in \mathbb{N}$.
Since $(a,R,R) = \{ 0 \}$ and $X \in N_m(S) \cap N_r(S)$, we get that
$(a, RX^n , RX^p) = (a , RX^n , R)X^p = (a , R , X^n R)X^p \subseteq
\sum_{i \in \mathbb{N}} (a , R , \pi_i^n(R) X^i) X^p \subseteq
\sum_{i = 1}^n (a , R , R) X^{i+p} = \{ 0 \}.$
By Proposition \ref{intersection}, we are done
if we can show that $a \in N_m(S)$ or $a \in N_r(S)$.

Case 1: (N4) holds.
We show that $a \in N_r(S)$.
We wish to show that 
\begin{equation}\label{rightzero}
(bX^n , cX^p , a) = 0.
\end{equation}
Since $X \in N_m(S) \cap N_r(S)$ and $a \in C(S)$, we get that
$( (bX^n) (cX^p) ) a = ( ((bX^n) c) X^p) a = ( (bX^n)c ) (X^p a) =
( (bX^n ) c ) (a X^p) =  (((bX^n)c) a)X^p = (( b (X^n c) ) a) X^p$
and, by Proposition \ref{generalaxioms}(a), we get that
$bX^n ( (cX^p) a ) = bX^n ( c (X^p a) ) = bX^n ( c ( a X^p ) ) =
bX^n ( (ca) X^p ) = (bX^n (ca) ) X^p 
= ( b ( X^n (ca))) X^p = ( b ( (X^n c) a ) ) X^p.$
This shows \eqref{rightzero}.

Case 2: (N5) holds.
We show that $a \in N_m(S)$.
We wish to show that
\begin{equation}\label{middlezero}
(bX^n , a , cX^p) = 0.
\end{equation}
Since $X \in N_r(S)$, we only need to show \eqref{middlezero} for $p=0$.
Since $X \in N_m(S) \cap N_r(S)$, $a \in C(S)$
and $a$ associates with all elements of $R$, we get that
$((bX^n)a)c = (b (X^n a) )c = (b (a X^n))c = 
( (ba)X^n ) c = (ba)(X^n c) 
= \sum_{i \in \mathbb{N}} (ba) \pi_i^n(c) X^i =
\sum_{i \in \mathbb{N}} b (a \pi_i^n(c) ) X^i.$ 
On the other hand, since $a \in C(S)$, $X \in N_m(S) \cap N_r(S)$ 
and Proposition \ref{generalaxioms}(b) holds, we get that
$bX^n (a c) = b (X^n (ac)) = b ( (X^n a) c) = 
b ( (a X^n) c) = b ( a (X^n c) ) =
\sum_{i \in \mathbb{N}} b ( a \pi_i^n(c) ) X^i.$
This shows \eqref{middlezero}.
\end{proof}

\begin{cor}\label{corcenter}
If $\delta$ is a kernel derivation on $R$ and we put $D = R[X ; \id_R , \delta]$, then
$Z(D)$ is the set of all $a \in D$ such that
(i) $a$ commutes with $X$, and
(ii) $a$ commutes with all elements of $R$, and
(iii) $a$ associates with all elements of $R$.
\end{cor}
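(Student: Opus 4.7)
The plan is to deduce the corollary from Proposition \ref{associativecondition}. Observe first that $D$ is a strong non-associative Ore extension by Proposition \ref{sufficientpolynomial}, since $\id_R$ is trivially both right and left $R_\delta$-linear, so the hypothesis that $\delta$ is a kernel derivation places us within the scope of Proposition \ref{associativecondition} with $\sigma = \id_R$ and $R_\delta^\sigma = R_\delta$.

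One inclusion of the corollary is immediate: if $a \in Z(D) = C(D) \cap N(D)$, then $a$ commutes with every element of $D$ (in particular with $X$ and with all $r \in R$) and associates with every triple drawn from $D$ (in particular with pairs of elements from $R$). For the converse, suppose $a = \sum_{i=0}^n a_i X^i \in D$ satisfies (i), (ii), and (iii). The key step is to show that condition (i) alone forces $a_i \in R_\delta$ for every $i$. Once $a$ is known to lie in $R_\delta[X]$, Proposition \ref{associativecondition} immediately gives $a \in Z(D)$ using (ii) and (iii).

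To verify that $a_i \in R_\delta$, I compute $[a, X]$, exploiting the fact, established in the proof of Proposition \ref{necessarypolynomial}, that $X$ and all its powers lie in $N_m(D) \cap N_r(D)$. Since $X^i \in N_r(D)$, both associators $(X, a_i, X^i)$ and $(a_i, X, X^i)$ vanish, which yields
\[
X(a_i X^i) = (Xa_i) X^i = \bigl(\delta(a_i) + a_i X\bigr) X^i = \delta(a_i) X^i + a_i X^{i+1}.
\]
Summing over $i$ and subtracting $aX = \sum_i a_i X^{i+1}$ (again justified by $X^i \in N_r(D)$) gives $[a, X] = -\sum_{i=0}^n \delta(a_i) X^i$. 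The axiom (N1) of left $R$-linear independence of $\{1, X, X^2, \ldots\}$ then forces $\delta(a_i) = 0$ for every $i$, completing the reduction.

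The argument is essentially bookkeeping, with no deep obstacle. The only subtlety is to invoke the correct nucleus membership at each rearrangement of products, since full associativity is not available a priori in $D$; but (N3), together with the already-established membership of $X$ and its powers in $N_m(D) \cap N_r(D)$, makes every manipulation above legitimate.
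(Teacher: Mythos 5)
Your proof is correct and follows exactly the route the paper intends: the corollary is stated as a consequence of Proposition \ref{associativecondition}, and the only additional content needed is your observation that condition (i) forces $a \in R_{\delta}[X]$ via the computation $[a,X] = -\sum_i \delta(a_i)X^i$ together with (N1), plus the remark that $D$ is strong because $\id_R$ and the kernel derivation $\delta$ are $R_{\delta}$-linear. Nothing further is required.
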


\begin{prop}\label{regular}
Let $\sigma$ be injective and suppose that $a,b \in S=R[X;\sigma, \delta]$ are elements such that $ab=ba=1$. If the leading coefficient of $a$ is a regular element of $R$, then $a,b \in R$. 
\end{prop}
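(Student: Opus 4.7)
The plan is to read off the top-degree coefficient of $ab = 1$ and use the regularity of the leading coefficient of $a$ together with the injectivity of $\sigma$ to force both $a$ and $b$ to lie in degree zero. First I would observe that $a, b$ are both nonzero since $ab = 1 \neq 0$, so they have well-defined degrees $m, n \in \mathbb{N}$ with nonzero leading coefficients $a_m, b_n \in R$, and write $a = \sum_{i=0}^m a_i X^i$ and $b = \sum_{j=0}^n b_j X^j$.

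Next I would expand $ab$ using the monomial rule \eqref{productmonomials} and bilinearity, which gives $ab = \sum_{i,j,k \in \mathbb{N}} a_i\, \pi_k^i(b_j)\, X^{k+j}$. Because $\pi_k^i \equiv 0$ for $k > i$, the exponent $k + j$ is bounded above by $i + j \le m + n$, and the only triple $(i,j,k)$ that can contribute to $X^{m+n}$ is $(m,n,m)$. Since $\pi_m^m = \sigma^m$, the coefficient of $X^{m+n}$ in $ab$ is exactly $a_m \sigma^m(b_n)$. Comparing with $ab = 1$ leaves two possibilities: either $m+n = 0$, in which case $a, b \in R$ and the conclusion follows, or $a_m \sigma^m(b_n) = 0$.

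The latter case would be ruled out by the two hypotheses. Regularity of $a_m$ forces $\sigma^m(b_n) = 0$, and then $m$-fold application of the injectivity of $\sigma$ yields $b_n = 0$, contradicting the choice of $b_n$ as the leading coefficient of the nonzero polynomial $b$. Hence $m = n = 0$, as desired. I expect no real obstacle; the only point requiring care is the degree bookkeeping above, and it is worth noting in passing that the hypothesis $ba = 1$ is not actually used in this argument.
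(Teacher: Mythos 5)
Your proof is correct and follows essentially the same route as the paper: expand $ab$ via \eqref{productmonomials}, isolate the coefficient of $X^{m+n}$ as $a_m\sigma^m(b_n)$, and use regularity of $a_m$ together with injectivity of $\sigma$ to kill $b_n$. The one substantive difference is that you push this single comparison to its full conclusion ($m+n=0$ at once), whereas the paper's proof only extracts $\deg b=0$ from it and then invokes the second relation $ba=1$ to force $\deg a=0$; your closing remark that the hypothesis $ba=1$ is never actually used is correct, and your version is the tidier one.
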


\begin{proof}
Suppose that $b = \sum_{i=0}^m b_i X^i$,  where $b_m \neq 0$.
Comparing coefficients of $X^{n+m}$ in the relation
$ab = 1$ we get that $a_n \sigma^n(b_m) = 0$
if $m + n > 0$.
Since $a_n$ is regular, we therefore get that 
$\sigma^n(b_m)=0$ whenever $m+n > 0$.
By injectivity of $\sigma$, we get $b_m=0$ if $m > 0$.
Comparing coefficients of degree $n$ in the relation $ba = b_0 a = 1$ 
we get that $b_0 a_n = 0$ if $n > 0$. Since $b_0 = b \neq 0$
and $a_n$ is regular, we get that $n=0$.
Hence $m=n=0$ and $a,b \in R$. 
\end{proof}

\begin{prop}\label{sumdegrees}
If $a,b \in S$, then
$\deg(ab) \leq \deg(a) + \deg(b)$.
Moreover, if $b$ is monic or $a$ is monic and $\sigma$ is injective,
then equality holds.
\end{prop}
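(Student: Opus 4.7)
The plan is to reduce to the case of monomials using bilinearity of the product, and then exploit the explicit formula \eqref{productmonomials} which shows that $a X^m \cdot b X^n = \sum_{i=0}^{m} a \pi_i^m(b) X^{i+n}$, so every term has degree at most $m+n$. Summing over the monomial decompositions of $a$ and $b$ yields the inequality $\deg(ab) \leq \deg(a) + \deg(b)$ immediately.

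For the second (equality) claim, let $M = \deg(a)$, $N = \deg(b)$ with leading coefficients $a_M$ and $b_N$, and write $a = \sum_{m=0}^{M} a_m X^m$, $b = \sum_{n=0}^{N} b_n X^n$. I would inspect the coefficient of $X^{M+N}$ in $ab$. Applying \eqref{productmonomials} termwise, a contribution to degree $M+N$ from $a_m X^m \cdot b_n X^n = \sum_i a_m \pi_i^m(b_n) X^{i+n}$ requires $i+n = M+N$ with $i \leq m \leq M$ and $n \leq N$, forcing $m = M$, $n = N$, and $i = M$. Since $\pi_M^M = \sigma^M$, the coefficient of $X^{M+N}$ in $ab$ is exactly $a_M \sigma^M(b_N)$.

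It then remains to verify that $a_M \sigma^M(b_N) \neq 0$ under either hypothesis. If $b$ is monic, then $b_N = 1$, and since $\sigma(1)=1$ we obtain $\sigma^M(1) = 1$, so the leading coefficient equals $a_M \neq 0$. If instead $a$ is monic and $\sigma$ is injective, then $a_M = 1$ and the coefficient is $\sigma^M(b_N)$; since iterated injective maps remain injective and $b_N \neq 0$, this is nonzero.

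There is no real obstacle here; the only minor care needed is to ensure that no cancellation among the various $a_m \pi_i^m(b_n) X^{i+n}$ can occur at degree $M+N$, which is handled by the combinatorial observation above that only the single triple $(m,n,i) = (M,N,M)$ produces a top-degree term. The argument is otherwise a direct unwinding of the definition in \eqref{productmonomials}, and all the ingredients (notably $\sigma(1)=1$ and the convention $\pi_i^m = 0$ for $i>m$) have already been established in the preceding sections.
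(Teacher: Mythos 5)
Your proposal is correct and follows essentially the same route as the paper: both identify the coefficient of $X^{\deg(a)+\deg(b)}$ in $ab$ as $a_M\sigma^M(b_N)$ via the formula \eqref{productmonomials} and then observe that this is non-zero when $b_N=1$, or when $a_M=1$ and $\sigma$ is injective. Your version merely spells out in more detail why only the single triple $(m,n,i)=(M,N,M)$ contributes at top degree, which the paper leaves implicit.
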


\begin{proof}
Suppose that $\deg(a)=m$ and $\deg(b)=n$.
Let $a_m$ and $b_n$ denote the leading coefficients
of $a$ and $b$ respectively.
Then $ab = a_m \sigma^m(b_n) X^{m+n} + [\mbox{lower terms}]$.
So $\deg(ab) \leq m+n = \deg(a) + \deg(b)$.
Equality holds if and only if $a_m \sigma^m(b_n) \neq 0$.
This holds in particular if $b_n=1$ or if $a_m=1$ and $\sigma$ is injective.
\end{proof}

Next we show that there in some cases is a Euclidean algorithm for $S$.

\begin{prop}\label{euclideanalgorithm}
If $a,b \in S$ where $b$ is monic, then $a = qb + r$ 
for suitable $q,r \in S$ such that either $r=0$ or $\deg(r) < \deg(b)$.
\end{prop}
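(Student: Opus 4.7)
The plan is to mimic the classical polynomial long division algorithm, performing induction on $\deg(a)$. The key technical point, given that $S$ is only non-associative, is to isolate all uses of associativity to the facts already secured: distributivity of the product (which holds in any non-associative ring) and the degree/leading-coefficient formula from Proposition \ref{sumdegrees}.

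First I would handle the base case. If $a=0$ or $\deg(a)<\deg(b)$, take $q=0$ and $r=a$; there is nothing to do. So assume $m:=\deg(a)\geq \deg(b)=:n$, and that the result is already known for every $a'$ with $\deg(a')<m$.

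For the inductive step, let $a_m$ denote the leading coefficient of $a$ and form the element
\[
a' \;:=\; a - (a_m X^{m-n})\cdot b.
\]
Here $a_m X^{m-n}$ is an honest monomial (this uses power-associativity of $X$, which is part of axiom (N3)), and the product $(a_m X^{m-n})\cdot b$ is unambiguous since it is a product of two elements of $S$. By Proposition \ref{sumdegrees} applied to the monic polynomial $b$, we have $\deg((a_m X^{m-n})\cdot b)=(m-n)+n=m$, and its leading coefficient is $a_m\cdot \sigma^{m-n}(1) = a_m$ (using $\sigma(1)=1$, hence $\sigma^{k}(1)=1$ for every $k$). Therefore the two leading terms of degree $m$ cancel, and $\deg(a')<m$ (or $a'=0$).

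By the induction hypothesis applied to $a'$, there exist $q',r\in S$ with $a'=q'b+r$ and either $r=0$ or $\deg(r)<\deg(b)$. Using left distributivity in $S$, we obtain
\[
a \;=\; (a_m X^{m-n})\cdot b + q'b + r \;=\; \bigl(a_m X^{m-n}+q'\bigr)\cdot b + r,
\]
so setting $q:=a_m X^{m-n}+q'$ completes the induction. The only place where anything beyond ring distributivity is needed is the leading-coefficient computation for $(a_m X^{m-n})\cdot b$, and that is precisely what Proposition \ref{sumdegrees} provides; no associativity of $S$ itself is invoked. I expect this step to be entirely routine, with no real obstacle, because the non-associative subtleties have been absorbed into Proposition \ref{sumdegrees} and the nuclear properties of $X$ established earlier.
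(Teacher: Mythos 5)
Your proof is correct and follows essentially the same route as the paper's: induction on $\deg(a)$, subtracting $(a_m X^{m-n})b$ to kill the leading term, and reassembling via distributivity. You are in fact slightly more careful than the paper, since you explicitly verify via Proposition~\ref{sumdegrees} and $\sigma^k(1)=1$ that the leading coefficients cancel.
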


\begin{proof}
We follow closely the proof in \cite[p. 94]{rowen1988} for the associative case.
Without loss of generality, we may assume that $a\neq 0$.
Suppose that $\deg(a)=m$ and $\deg(b)=n$.
Let $a_m$ denote the leading coefficient of $a$.
Case 1: $m < n$. Then we can put $q=0$ and $r=a$.
Case 2: $m \geq n$.
Put $c = a - (a_m X^{m-n}) b$. Then $\deg(c) < \deg(a)$.
By induction there are $q',r' \in S$
with $c = q'b + r'$ and $r' = 0$ or $\deg(r') < n$. 
This implies that
$a = 
(a_m X^{m-n}) b + c = 
(a_m X^{m-n}) b + q'b + r' =  
(a_m X^{m-n} + q')b + r'.$
So we can put $q = a_m X^{m-n} + q'$ and $r=r'$.
\end{proof}

\subsection*{Proof of Theorem \ref{maintheorem}}

\subsubsection*{Proof of {\rm (a)}} 
Let $I$ be an ideal of $D$.
Suppose that $m$ is the minimal degree of non-zero
elements of $I$.
Put 
$J = \{ r \in R \mid \exists r_0,r_1, \ldots, r_{m-1} \in R : 
rX^m + r_{m-1} X^{m-1} + \ldots + r_0 \in I \}.$
It is clear that $J$ is a ideal of $R$.
From the fact that $XI - IX \subseteq I$ it follows that
$J$ is $\delta$-invariant.
Since $R$ is $\delta$-simple and $J$ is non-zero, 
we can conclude that $J = R$.
In particular, $1 \in J$.
Therefore there is a monic $a \in I$ of degree $m$.

Now we show that $a \in Z(D)$.
To this end, we check (i), (ii) and (iii) of Corollary \ref{corcenter}.
Since $a \in D_{\delta}$, (i) holds.
Now we check (ii). Take $r \in R$.
Since $a$ is monic the leading coefficient
of $[a,r]$ is $[1,r] = 0$.
Thus $\deg([a,r]) < m$ which, since $[a,r] \in I$,
implies that $[a,r]=0$, by minimality of $m$.
Now we check (iii).
Take $r,s \in R$.
Since $a$ is monic and the leading coefficients of all the polynomials
$(a,r,s)$, $(r,a,s)$ and $(r,s,a)$ equal zero, all
of them have degree less that $m$.
By minimality of $m$ and the fact that all of these polynomials
belong to $I$, we get that they are zero.
Thus (iii) holds.

Next we show that $I = Da$. 
The inclusion $I \supseteq Da$ is clear.
Now we show the reversed inclusion.
Take a non-zero $c \in I$.
Since $\deg(c) \geq \deg(a)$, we can use 
Proposition \ref{euclideanalgorithm} to conclude that
$c = qa + r$, for some $q,r \in S$ with $\deg(r) < \deg(a)$.
But then $r = c - qa \in I$, which, by minimality of $m$,
implies that $r = 0$. Therefore $c = qa \in I$.
Hence $I \subseteq Da$.

Finally we show uniqueness of $a$.
Suppose that $d \in D$ is monic and $I = Dd$. 
From the relations $a \in Dd$
and $d \in Da$ we get, respectively from Proposition \ref{sumdegrees}, that
$\deg(a) \geq \deg(d)$ and $\deg(d) \geq \deg(a)$,
which together imply that $\deg(a) = \deg(d)$.
Since $a$ and $d$ are monic,
we get that $\deg(a-d) < m$, which, by $a-d \in I$ and 
minimality of $m$, implies that $a=d$.

\subsubsection*{Proof of {\rm (b)}} 
Case 1: $Z(D)$ only contains polynomials of degree zero.
Then $Z(D) \subseteq Z(R)_{\delta}$. But since $Z(R)_{\delta} \subseteq Z(D)$
we get that $Z(D) = Z(R)_{\delta}$ and we can choose $b=1$.

Case 2: $Z(D)$ contains polynomials of degree greater than zero.
Let $n$ denote the least degree of non-constant polynomials in $Z(D)$.
Take $b \in Z(D)$ such that $\deg(b)=n$.
Now we show that we may choose $b$ to be monic.
Since $I = Db$ is an ideal of $D$, by (a), 
we may choose a monic $f \in I \cap Z(D)_{\delta}$
such that $I = Df$. But then $b = cf$ for some $c \in D$.
Since $f$ is monic we get that $n = \deg(b) = \deg(c) + \deg(f)$
which implies that $\deg(f) \leq n$. By minimality of $n$
we get that $\deg(f) = n$ and we may choose $b$ to be the monic $f$.

Now take $g \in Z(D)$ of degree $m$.
We will show by induction over the degree of $g$ 
that $g \in Z_{\delta}(R)[b]$.
Base case: $m=0$, i.e. $g$ is constant.
Then $g \in R \cap Z(S) = Z_{\delta}(R) \subseteq Z_{\delta}(R)[b]$.
Induction step: suppose that $m > 0$ and that we have shown the claim
for all $m' < m$.
Since $b$ is monic, we can write 
$g = hb + k$ for some $h,k \in S$ with $\deg(k) < \deg(b)$.
Note that, since $b$ is monic, we get that $\deg(h) < \deg(g)$.
We claim that $h,k \in Z(D)$.
If we assume that the claim holds, then, by the induction 
hypothesis, we are done.
Now we show the claim.
To this end, we will check (i),(ii) and (iii) in Corollary \ref{corcenter}.
First we check (i).
Note that $0 = [X,g] = [X,h]b + [X,k]$.
Seeking a contradiction, suppose that $[X,h] \neq 0$.
Since $b$ is monic and $\deg([X,k]) \leq \deg(k)$,
we get the contradiction
$-\infty = \deg(0) = \deg([X,g])= \deg( [X,h]b + [X,k] ) \geq n$.
Therefore $[X,h]=0$ and hence $[X,k]=0$.
In other words $h,k \in R_{\delta}[X]$.
Now we show (ii). 
To this end, note that
$0 = [r,g] = [r,h]b + [r,k]$.
Seeking a contradiction, suppose that $[r,h] \neq 0$.
Since $b$ is monic and $\deg([r,k]) \leq \deg(k)$,
we get the contradiction
$-\infty = \deg(0) = \deg([r,g]) = \deg( [r,h]b + [r,k] ) \geq n$.
Therefore $[r,h]=0$ and hence $[r,k]=0$.
Finally, we show (iii). Take $r,s \in R$.
Let $\alpha(\cdot)$ denote either of the maps
$(\cdot,r,s)$, $(r,\cdot,s)$ or $(r,s,\cdot)$. Then
$0 = \alpha(g) = \alpha(h)b + \alpha(k)$.
Seeking a contradiction, suppose that $\alpha(h) \neq 0$.
Since $b$ is monic and $\deg(\alpha(k)) \leq \deg(k)$,
we get the contradiction
$-\infty = \deg(0) = \deg(\alpha(g)) = \deg( \alpha(h)b + \alpha(k) ) \geq n$.
Therefore $\alpha(h)=0$ and hence $\alpha(k)=0$.
This completes the induction step.

Now we show uniqueness of $b$ up to addition by an element from $Z(R)_{\delta}$.
Case 1: $Z(D)$ only contains polynomials of degree zero.
Then there is only one monic polynomial in $Z(D)$, namely $b=1$.

Case 2: $Z(D)$ contains polynomials of degree greater than zero
i.e. $n > 0$.
Suppose that there is another monic $b' \in R_{\delta}[X]$
such that $Z(D) = Z(R)_{\delta}[b']$.
Then there is a polynomial $p \in Z(R)_{\delta}[X]$
such that $b = p(b')$. Hence $n = \deg(b) = \deg(p(b)) \geq \deg(b')$.
By minimality of $n$, we get that $\deg(b')=n$.
But then $b-b'$ is a polynomial in $Z(D)$
of degree less than $n$, which, by minimality of $n$,
implies that $b - b' \in Z(R)_{\delta}$.

\subsubsection*{Proof of {\rm (c)}} 
First we show the ''only if'' statement.
Suppose that $D$ is simple.
By Proposition \ref{sigmadeltasimple}, we get that 
$R$ is $\delta$-simple.
By Proposition \ref{centerfield}, we get that $Z(D)$ is a field.

Next we show the ''if'' statement.
Suppose that $R$ is $\delta$-simple and that $Z(D)$ is a field.
Let $I$ be a non-zero ideal of $D$.
By (a) and Proposition \ref{regular}, this implies that 
the polynomial in $Z(D)$ corresponding to $I$ is $1$.
This implies that $I = D$.

By (b) and  Proposition \ref{regular}, 
the ring $Z(R)_{\delta}[b]$ is a field precisely when $b=1$.

\subsubsection*{Proofs of {\rm (d)} and {\rm (e)}}
By Proposition \ref{rightbasis}, we can write $b = \sum_{i=0}^n b_i X^i$,
where $b_i \in R$, for $i\in \{1,\ldots,n\}$, with $b_n = 1$.
Since $b \in Z(D)$, we get, in particular, that $Xb = bX$.
This implies that $\delta(b_i)=0$, for $i\in\{1,\ldots,n\}$.
Therefore $b = \sum_{i=0}^n X^i b_i$.
For every  $j \in \{ 1,\ldots,n \}$ define the polynomial
$c_j = \sum_{i=j}^n X^{i-j} {i \choose j} b_i$.
We claim that each $c_j \in Z(D)$.
If we assume that the claim holds, then, by minimality of $n$,
we get that $b_j = c_j \in Z(R)_{\delta}$ and that
${i \choose j} b_i = 0$ whenever $1 \leq j < i \leq n$.
In the case when the characteristic of $Z(R)_{\delta}$ is zero,
we therefore get that $b=1$ or $b = b_0 + X$.
The relation $br=rb$ now gives us that
$\delta = \delta_{b_0}$.
Now suppose that the characteristic of $Z(R)_{\delta}$ is a prime $p$.
Fix $i \in \{ 1,\ldots,n \}$ such that $b_i$ is non-zero.
Then ${i \choose j} = 0$ when $1 \leq j < i$.
By Lucas' Theorem (see e.g. \cite{fine1947}) 
this implies that $i$ must be a power of $p$.
Choose the smallest $q \in \mathbb{N}$ such that $p^q \leq n$.
For each $i \in \mathbb{N}$ put $c_i = b_{p^i}$.
Also put $c = b_0$.
Then $b = c + \sum_{i = 0}^q c_i \delta^{p^i}$.
The relation $br=rb$ now gives us that
$\delta_c + \sum_{i=0}^n c_i \delta^{p^i} = 0$.

Now we show the claim.
To this end, we will check 
conditions (i), (ii) and (iii) of Corollary \ref{corcenter}.
Since $\delta(b_i)=0$ we know that (i) holds.
Now we show (ii).
Take $r \in R$. 
First note that since $br = rb$, we can use Proposition \ref{rightformula}
to conclude that 
\begin{equation}\label{notethat}
b_v r = \sum_{i=v}^n (-1)^{i-v} {i \choose i-v} \delta^{i-v}(r) b_i
\end{equation}
for each $v \in \{ 0,\ldots,n \}$. Thus,
\begin{align*}
c_j r &= \sum_{i=j}^n r \left( X^{i-j} {i \choose j} b_i \right) \stackrel{[X \in N_m(D)]}{=}
\sum_{i=j}^n \left( r X^{i-j} \right) {i \choose j} b_i \\
&=
\sum_{i=j}^n \left( \sum_{k=0}^{i-j} X^{i-j-k} (-1)^k {i-j \choose k} 
\delta^k(r) \right) {i \choose j} b_i  \stackrel{[X \in N_l(D)]}{=} \\
\end{align*}
\begin{align*}
&= \sum_{i=j}^n \sum_{k=0}^{i-j} X^{i-j-k} (-1)^k {i-j \choose k} 
\delta^k(r) {i \choose j} b_i  \stackrel{[v = i-k]}{=} \\
&= \sum_{i=v}^n \sum_{v=j}^n X^{v-j} {i \choose j}{i-j \choose i-v} 
(-1)^{i-v} \delta^{i-v}(r) b_i \\
&= \sum_{i=v}^n \sum_{v=j}^n X^{v-j} {v \choose j}{i \choose i-v} 
(-1)^{i-v} \delta^{i-v}(r) b_i \stackrel{[{\rm Eq.} \ \eqref{notethat}]}{=}
\sum_{v=j}^n X^{v-j} {v \choose j} b_v r = c_j r.
\end{align*}

Finally, we show (iii).
Take $r,s \in R$.
From the relations $(r,s,b)=0$ and $(b,r,s)=0$
it follows that $(r,s,b_i)=(b_i,r,s)=0$.
Hence we get that
$(r,s,c_j)=(c_j,r,s)=0$.
Thus $c_j \in N_r(R) \cap N_l(R)$.
Since $c_j \in C(R)$, we now automatically get that
$(r,c_j,s) = (r c_j)s - r(c_j s) = 
(c_j r) s - r (s c_j) = c_j (rs) - (rs) c_j = 0$. 
Hence $c_j \in N_m(R)$.
\hfill $\qed$

\begin{rem}
Our proof of Theorem \ref{maintheorem}(d)(e) follows closely
the proof of Amitsur \cite[Theorems 3 and 4]{amitsur1957} 
from the associative situation. We also remark that Amitsur's proof
is much simpler in characteristic $p>0$ 
than the proofs given later by Jordan \cite[Theorem 4.1.6]{jordan1975}
in the $\delta$-simple situation, although, as we show,
Amitsur's original proof can be adapted to this situation.
\end{rem}

\section{Non-associative Weyl Algebras}\label{sectionweyl}

In this section, we show that there are lots of natural examples
of non-associative diffe\-rential polynomial rings.
To this end, we introduce non-associative versions of the first Weyl algebra
(see Definition \ref{definitionweyl}) and we show that they are often 
simple regardless of the characteristic (see Theorem \ref{theoremweyl}).
Throughout this section, $T$ denotes a non-associative ring
and $T[Y]$ denotes the polynomial ring over
the indeterminate $Y$. In other words $T[Y] = T[Y ; \id_R , 0]$
as a generalized polynomial ring.

\begin{defn}\label{definitionweyl}
If $\delta : T[Y] \rightarrow T[Y]$
is a $T$-linear map such that $\delta(1)=0$,
then the non-associative differential polynomial ring
$T[Y] [X ; \id_R , \delta]$ is called a
{\it non-associative Weyl algebra}. 
\end{defn}

\begin{rem}
A non-associative Weyl algebra is a generalization of the classical (associative) first Weyl algebra, hence the name.
Recall that the first Weyl algebra,
$A_1(\C)=\C\langle X,Y\rangle / (XY-YX-1)$
may be regarded as a differential polynomial ring
$\C[Y][X;\identity_\C,\delta]$,
where $\delta : \C[Y] \to \C[Y]$
is the standard derivation on $\C[Y]$.
\end{rem}

\begin{thm}\label{theoremweyl}
If $T$ is simple and there for each positive $n \in \mathbb{N}$ is a non-zero $k_n \in Z(T)$
such that $\delta(Y^n) = k_n Y^{n-1}$, then
the non-associative Weyl algebra $T[Y] [X ; \id_R , \delta]$ is simple.
\end{thm}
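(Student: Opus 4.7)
I would prove Theorem \ref{theoremweyl} by invoking Theorem \ref{maintheorem}(c) applied to $D = R[X; \id_R, \delta]$ with $R = T[Y]$: it suffices to establish (I) $R$ is $\delta$-simple and (II) $Z(D)$ is a field. From $T$-linearity of $\delta$ and $\delta(1) = 0$, one sees $\delta|_T = 0$, so $R_\delta = T$; moreover, Proposition \ref{centerfield} tells us that $Z(T)$ is a field, so every $k_n$ is invertible in $Z(T)$.

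For (I), I would pick a non-zero $\delta$-invariant ideal $I$ of $T[Y]$ and choose $f \in I$ of minimal $Y$-degree. If this degree were a positive integer $n$, then $\delta(f) \in I$ would have $Y$-degree $n-1$ with leading coefficient equal to the (non-zero) leading coefficient of $f$ multiplied by the invertible central element $k_n$, hence non-zero; this contradicts the minimality of $n$. So $f \in T \cap I$, and since $T \cap I$ is a non-zero two-sided ideal of the simple ring $T$, we obtain $1 \in I$, whence $I = R$.

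For (II), I would first note $Z(T) \subseteq Z(R)_\delta \subseteq Z(D)$, the last inclusion coming from Theorem \ref{maintheorem}(b). For the reverse inclusion $Z(D) \subseteq Z(T)$, I would suppose $b = \sum_{j=0}^n c_j X^j \in Z(D)$ has $c_n \neq 0$ with $n \geq 1$ and derive a contradiction. Since $[b,X]=0$, a direct computation with the product formula gives $\delta(c_j)=0$ for each $j$, so $c_j \in R_\delta = T$. Using that $\sigma = \id_R$ makes $\pi_i^j = \binom{j}{i}\delta^{j-i}$ and the iterated formula $\delta^k(Y^n) = k_n k_{n-1} \cdots k_{n-k+1}\, Y^{n-k}$ (which vanishes for $k > n$), I would extract the coefficient of $X^0$ in $[b, Y^n] = 0$. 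Since $Y^n \cdot c_j X^j = c_j Y^n X^j$ (valid because $Y^n$ commutes with $c_j$ in $R$ and $X^j \in N_r(D)$) contributes nothing at $X^0$ for $j \geq 1$, this coefficient reduces to $\sum_{j=1}^n c_j\, k_n k_{n-1} \cdots k_{n-j+1}\, Y^{n-j}$, a polynomial in $Y$ over $T$ whose summands have pairwise distinct $Y$-degrees. Therefore each $c_j$ times the (invertible) product of $k$'s must vanish, forcing $c_j = 0$ for $j \geq 1$, contradicting $c_n \neq 0$. Thus $Z(D) \subseteq R \cap Z(D) = Z(R)_\delta = Z(T)$, which is a field, and Theorem \ref{maintheorem}(c) yields simplicity of $D$.

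The main technical obstacle is the $X^0$-coefficient computation in (II), which requires careful use of the non-associative product formula in $D$ (invoking $X^j \in N_m(D) \cap N_r(D)$) and relies crucially on the centrality and invertibility of the scalars $k_n$, together with the fact that $\delta^{n+1}(Y^n) = 0$ makes the resulting sum finite and triangular, so that its leading term isolates $c_n$.
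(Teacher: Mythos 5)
Your proposal is correct and follows essentially the same strategy as the paper: reduce to Theorem \ref{maintheorem}(b)(c), prove $\delta$-simplicity of $T[Y]$ by repeatedly applying $\delta$ to drop the $Y$-degree (using invertibility of the central $k_n$) until one lands in $T\cap I$, and then show the center contains no polynomial of positive $X$-degree. The only cosmetic difference is in that last step: the paper extracts the $X^{n-1}$-coefficient of $[b,r]=0$ for arbitrary $r\in T[Y]$ to conclude $\delta=0$, a contradiction, whereas you extract the $X^{0}$-coefficient of $[b,Y^{n}]=0$ and use the triangular system $\sum_{j\ge 1}c_j k_n\cdots k_{n-j+1}Y^{n-j}=0$; both computations are valid.
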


\begin{proof}
Put $R = T[Y]$ and $S = R[X ; \id_R , \delta]$.
First we show that $R$ is $\delta$-simple.
Let $I$ be a non-zero $\delta$-invariant ideal of $R$.
Take a non-zero $a \in I$. Suppose that the degree of $a$ is $n$.
From the definition of $\delta$ it follows that $\delta^n(a)$
is a non-zero element of $I$ of degree zero.
This means that $I \cap T$ is non-zero.
By simplicity of $T$ it follows that $I \cap T = T$.
In particular $1 \in T = I \cap T \subseteq I$.
Hence $I = R$.

It is clear that $\delta$ is a kernel derivation.
Therefore, by Theorem \ref{maintheorem}(b)(c) we are done if we can show that
every non-zero monic $b \in R_{\delta}[X] \cap Z(S)$ is of degree zero.
It is clear that $R_{\delta} = T$.
Therefore $b \in T[X] \cap Z(S)$.
Seeking a contradiction, suppose that the degree of $b$ is $n > 0$.
Put $b = X^n + c X^{n-1} + [\mbox{lower terms}]$.
From $b \in Z(S)$ it follows that $c \in Z(T)$.
Take $r \in R$. Then
$0 = br - rb = (\delta(r) + cr - r c)X^{n-1} + [\mbox{lower terms}] = [c \in Z(T)] = 
\delta(r) X^{n-1} + [\mbox{lower terms}]$.
Thus $\delta(r) = 0$, for all $r \in R$, which is a contradiction
since e.g.
$\delta(Y) = k_1 \neq 0$.
\end{proof}

\begin{cor}
If $T$ is simple and $\delta$ is the classical derivative on $T[Y]$, then
the non-associative Weyl algebra $T[Y] [X ; \id_R , \delta]$ is simple
if and only if ${\rm char}(T)=0$.
\end{cor}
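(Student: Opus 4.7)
My plan is to handle the two implications separately, using Theorem~\ref{theoremweyl} for the ``if'' direction and Proposition~\ref{sigmadeltasimple} (that $\delta$-simplicity of $R$ is necessary for simplicity of $D$) for the ``only if'' direction. The classical derivative on $T[Y]$ sends $\sum t_i Y^i$ to $\sum i\, t_i Y^{i-1}$, so it is $T$-linear, annihilates $1$, and satisfies $\delta(Y^n)=(n\cdot 1_T)\, Y^{n-1}$; throughout, $k_n$ will be the central element $n\cdot 1_T\in Z(T)$.

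For the ``if'' direction, assume $\mathrm{char}(T)=0$. Then for every positive $n\in\mathbb{N}$ the element $k_n = n\cdot 1_T$ is non-zero in $Z(T)$ (simplicity of $T$ forces $Z(T)$ to be a field by Proposition~\ref{centerfield}, and in particular $1_T$ has additive order zero). Since $T$ is simple by hypothesis and $\delta(Y^n)=k_n Y^{n-1}$ with $k_n\in Z(T)\setminus\{0\}$, Theorem~\ref{theoremweyl} applies directly and gives simplicity of $T[Y][X;\id_R,\delta]$.

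For the ``only if'' direction, I argue by contraposition: assume $\mathrm{char}(T)=p>0$ and produce a proper non-zero $\delta$-invariant ideal of $R=T[Y]$, which by Proposition~\ref{sigmadeltasimple} will prevent $S=R[X;\id_R,\delta]$ from being simple. The natural candidate is $I = Y^p\, T[Y]$. Because $Y$ lies in the center of $T[Y]$ and, being a commuting indeterminate, also in every nucleus of $T[Y]$ (so the same holds for $Y^p$), the set $I$ coincides with $T[Y]\cdot Y^p$ and is genuinely a two-sided ideal of the non-associative ring $T[Y]$; it is proper since $Y^p$ has positive degree and non-zero since $Y^p\neq 0$. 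To check $\delta$-invariance, note that $\delta$ is a derivation on $T[Y]$ (this uses only $T$-linearity and the Leibniz rule on monomials in $Y$, which holds regardless of associativity of $T$), and $\delta(Y^p)=p\cdot 1_T\cdot Y^{p-1}=0$ in characteristic $p$, so $\delta(Y^p f)=Y^p\delta(f)\in I$ for all $f\in T[Y]$. Thus $R$ fails to be $\delta$-simple, and Proposition~\ref{sigmadeltasimple} yields that $S$ is not simple.

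The only subtle point is the verification that $Y^pT[Y]$ really is a two-sided ideal of the possibly non-associative ring $T[Y]$; this is where I will be careful to invoke the fact that $Y$ (and hence $Y^p$) lies in the nucleus of $T[Y]$, so that the bracketings $(fY^p)g=f(Y^p g)=(fg)Y^p$ may be performed freely. Everything else is an immediate application of the results already established in the paper.
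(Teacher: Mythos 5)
Your proof is correct and follows essentially the same route as the paper: the ``if'' direction is the same direct application of Theorem~\ref{theoremweyl} with $k_n = n\cdot 1_T$, and your ``only if'' argument via the $\delta$-invariant ideal generated by $Y^p$ is exactly the alternative proof the paper itself records (the paper's first proof instead notes $Y^p\in Z(D)$ and invokes Proposition~\ref{regular}). Your extra care about $Y^p$ lying in the nucleus, and about $\delta$ being a derivation without associativity of $T$, is sound and fills in details the paper leaves implicit.
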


\begin{proof}
The ''if'' statement follows immediately from Theorem \ref{theoremweyl}
where $k_n = n$, for $n > 0$.

Now we show the ''only if'' statement. 
Suppose that ${\rm char}(T)=p>0$. 
Then $Y^p \in Z(T[Y] [X ; \id_R , \delta])$.
In particular, from Proposition \ref{regular}, we get that
$Z(T[Y] [X ; \id_R , \delta])$ is not a field.
By Theorem \ref{maintheorem}(c) we get that 
$T[Y] [X ; \id_R , \delta]$ is not simple.
As an alternative proof it is easy to see that
the proper non-zero ideal in 
$T[Y]$ generated by $Y^p$
is $\delta$-invariant.
Thus $T[Y]$ is not $\delta$-simple.
By Theorem \ref{maintheorem}(c), we get that 
$T[Y] [X ; \id_R , \delta]$ is not simple.
\end{proof}

\section{Kernel Derivations Defined by Automorphisms}\label{sectiondynamics}

In this section, we show 
simplicity results for a differential 
polynomial ring version of the quantum plane
(see Theorem \ref{theoremquantumtorus}) and
for differential polynomial rings defined by actions
on compact Hausdorff spaces (see Theorem \ref{theoremdynamics}).
To this end, we introduce a class of $\sigma$-kernel derivations
defined by ring morphisms
(see Definition \ref{definitionkernel}).
Throughout this section, $R$ denotes a non-associative ring.

\begin{prop}\label{definitionkernel}
If $\alpha : R \rightarrow R$ is a ring morphism,
then the map $\delta_{\alpha} : R \rightarrow R$
defined by $\delta_{\alpha}(r) = \alpha(r) - r$, for $r \in R$,
is a left and right $R_{\delta_{\alpha}}^{\identity_R}$-linear $\alpha$-kernel derivation.
Moreover, an ideal $I$ of $R$ is $\delta_{\alpha}$-simple
if and only if it is $\alpha$-simple.
\end{prop}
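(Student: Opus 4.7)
The proposition has two parts, and both yield to direct computation once the relevant sets are identified. My first step is to observe that
\[
R_{\delta_{\alpha}}^{\id_R} \;=\; \{a\in R\mid \delta_{\alpha}(a)=0\} \;=\; \{a\in R\mid \alpha(a)=a\} \;=\; R_{\delta_{\alpha}}^{\alpha},
\]
so the choice of superscript in the statement does not matter. The map $\delta_{\alpha}$ is clearly additive (because $\alpha$ is additive), and $\delta_{\alpha}(1)=\alpha(1)-1=0$ because $\alpha$ preserves the identity. Similarly $\alpha(1)=1$ is built in.

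Next I would verify that $\alpha$ and $\delta_{\alpha}$ are both left and right $R_{\delta_{\alpha}}^{\alpha}$-linear, which establishes that $\delta_{\alpha}$ is an $\alpha$-kernel derivation in the sense of Section~\ref{oreextensions}. Let $a\in R_{\delta_{\alpha}}^{\alpha}$, so $\alpha(a)=a$, and let $r\in R$. Multiplicativity of $\alpha$ gives $\alpha(ar)=\alpha(a)\alpha(r)=a\,\alpha(r)$ and $\alpha(ra)=\alpha(r)a$; subtracting $ar$ and $ra$ respectively produces the analogous identities $\delta_{\alpha}(ar)=a\,\delta_{\alpha}(r)$ and $\delta_{\alpha}(ra)=\delta_{\alpha}(r)\,a$. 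This is the only computation in the proof, and it is entirely routine.

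For the ``moreover'' part I would show that an ideal $I\subseteq R$ is $\delta_{\alpha}$-invariant if and only if it is $\alpha$-invariant (from which the corresponding simplicity equivalence is immediate). If $\alpha(I)\subseteq I$, then $\delta_{\alpha}(I)=\alpha(I)-I\subseteq I$. Conversely, if $\delta_{\alpha}(I)\subseteq I$, then for every $r\in I$ the element $\alpha(r)=\delta_{\alpha}(r)+r$ lies in $I$, giving $\alpha(I)\subseteq I$.

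I do not foresee any real obstacle; the only delicate point is the initial observation that $\ker(\delta_{\alpha})$ coincides with the fixed-point set of $\alpha$, which makes the kernel-derivation linearity hypothesis automatic from the multiplicativity of $\alpha$. Everything else is two short symbolic manipulations.
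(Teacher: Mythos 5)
Your proof is correct and follows essentially the same route as the paper: the same one-line computation $\delta_{\alpha}(rs)=\alpha(r)\alpha(s)-rs=\alpha(r)s-rs=\delta_{\alpha}(r)s$ for $s$ fixed by $\alpha$, and the same observation that for $a\in I$ one has $\delta_{\alpha}(a)\in I$ iff $\alpha(a)\in I$. Your explicit remark that $\ker(\delta_{\alpha})$ equals the fixed-point set of $\alpha$, so that $R_{\delta_{\alpha}}^{\id_R}=R_{\delta_{\alpha}}^{\alpha}$ and the linearity of $\alpha$ itself (required by the definition of $\alpha$-kernel derivation) comes for free, is a small but welcome clarification that the paper leaves implicit.
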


\begin{proof}
It follows immediately that $\delta_{\alpha}(1)=0$
and that $\delta_{\alpha}$ is additive.
Now we will show that $\delta_{\alpha}$ in fact is $R_{\delta_{\alpha}}^{\identity_R}$-linear
both from the left and the right.
In particular, $\delta_{\alpha}$ is an $\alpha$-kernel derivation.
Take $r \in R$ and $s \in \ker(\delta_\alpha)$.
Then $\delta_{\alpha}(rs) = 
\alpha(rs) - rs = 
\alpha(r)\alpha(s) - rs =
\alpha(r)s - rs = 
(\alpha(r) - r)s = 
\delta_{\alpha}(r)s$.
In the same way we get that $\delta_{\alpha}(sr) = s \delta_{\alpha}(r)$.
The last statement is clear since if $a \in I$,
then $\delta_\alpha(a) \in I$ if and only if $\alpha(a) - a \in I$.
\end{proof}

\begin{rem}\label{remarkderivation}
The $\alpha$-kernel derivation $\delta_{\alpha}$ from Proposition \ref{definitionkernel}
is seldom a derivation. In fact, suppose that $\delta_{\alpha}$ is a derivation.
Take $r,s \in R$. Then the relation 
$\delta_{\alpha}(rs) = \delta_{\alpha}(r)s + r \delta_{\alpha}(s)$
may be rewritten as
$\delta_{\alpha}(r) \delta_{\alpha}(s) = 0$.
So in particular, we get that $\delta_{\alpha}(r)^2 = 0$.
Hence, if $R$ is a reduced ring, i.e. a ring 
with no non-zero
nilpotent elements, then
$\delta_{\alpha}$ is a derivation if and only if $\alpha = \id_R$.
Thus, $\delta_\alpha$ would have to be the zero map.
\end{rem}

Let $T$ be a simple non-associative ring and suppose that $q \in Z(T) \setminus \{ 0 \}$.
Let $T[Y]$ denote the polynomial ring in the indeterminate $Y$ over $T$.
Define a ring automorphism $\alpha_q : T[Y] \rightarrow T[Y]$
by the $T$-algebra extension of the relation $\alpha_q(Y) = qY$.
By Proposition \ref{definitionkernel}, $\alpha_q$ in turn
defines an $\alpha$-kernel derivation $\delta_{\alpha_q} : T[Y] \rightarrow T[Y]$. 
It is not hard to show, using Remark \ref{remarkderivation},
that $\delta_{\alpha_q}$ is a classical derivation if and only if 
$q$ is nilpotent.

\begin{prop}\label{proprootunity}
If $T$ is simple, then $T[Y]$ is 
$\delta_q$-simple if and only if
$q$ is not a root of unity.  
\end{prop}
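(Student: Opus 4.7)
The plan is to first invoke Proposition~\ref{definitionkernel} to replace $\delta_q$-simplicity of $T[Y]$ with $\alpha_q$-simplicity, and then to prove the two directions of the equivalence separately.

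For the ``only if'' direction I would argue by contrapositive. If $q^n = 1$ for some $n \geq 1$, then $\alpha_q(Y^n) = q^n Y^n = Y^n$, so $Y^n$ is fixed by $\alpha_q$. Since $\sigma = \id_R$ and $\delta = 0$ in the construction of $T[Y]$, Proposition~\ref{newproof} places $Y$ in the nucleus of $T[Y]$, and in this polynomial ring $Y$ also commutes with $T$; it follows that the principal ideal $Y^n \cdot T[Y]$ is a proper, non-zero, $\alpha_q$-stable ideal of $T[Y]$, witnessing the failure of $\alpha_q$-simplicity.

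For the converse, suppose $q$ is not a root of unity and let $I$ be a non-zero $\alpha_q$-invariant ideal of $T[Y]$. The key identity is that, for any $f = \sum_{i \in S} a_i Y^i \in I$ with support $S$ and any $j \in S$, the element
\[
\alpha_q(f) - q^j f \;=\; \sum_{i \in S \setminus \{j\}} a_i (q^i - q^j)\, Y^i
\]
also lies in $I$. Since $q$ is not a root of unity, each scalar $q^i - q^j \in Z(T)$ is non-zero, so this is a new element of $I$ of strictly smaller support. Picking a non-zero $f \in I$ of minimal support size and iterating forces $|S| = 1$; that is, $I$ contains a non-zero monomial $aY^k$. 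Since $Y^k$ is nuclear in $T[Y]$, the set $J_k = \{\, b \in T : b Y^k \in I \,\}$ is a non-zero ideal of $T$, and simplicity of $T$ then yields $J_k = T$, so $Y^k \in I$ and $T \cdot Y^k \subseteq I$.

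The remaining step—and what I expect to be the delicate part—is promoting $Y^k \in I$ to $1 \in I$. The reduction identity above does not by itself lower the exponent $k$, so this step will require either a sharper choice of the starting $f$ that guarantees a zero-degree monomial survives the reduction, or an auxiliary argument showing that any non-zero $\alpha_q$-invariant ideal of $T[Y]$ must intersect $T$ non-trivially. Making this last step work is where the full strength of the hypothesis that $q$ is not a root of unity, together with the simplicity of $T$, will have to be deployed.
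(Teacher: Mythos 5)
Your argument is, up to bookkeeping, the same as the paper's: the paper picks a nonzero $a\in I$ of minimal \emph{degree} $m$ and forms $\alpha_q(a)-q^m a$, while you minimize the size of the support, but the reduction mechanism is identical, and so is the place where it stops. The gap you flag at the end is genuine, and it cannot be closed, because the ``if'' direction of the statement is false as written: for every $q$ the set $Y\,T[Y]$ of polynomials with zero constant term is a nonzero proper ideal of $T[Y]$, and $\delta_{\alpha_q}(fY)=\alpha_q(f)\,qY-fY=(q\alpha_q(f)-f)Y\in Y\,T[Y]$, so it is $\delta_{\alpha_q}$-invariant. The paper's own proof hides this by asserting that $\alpha_q(a)-q^m a$ is a \emph{nonzero} element of $I$ of degree less than $m$; but by minimality of $m$ that element must be zero, which (since each $q^i-q^m$ is a unit of the field $Z(T)$, hence not a zero divisor) only shows that $a$ is a monomial $a_mY^m$ --- exactly the configuration your support argument reduces to, and from which no further descent in the exponent is possible. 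So what you identified as ``the delicate part'' is in fact a counterexample to the claim, not a missing idea on your part. Your ``only if'' direction (the invariant ideal $Y^nT[Y]$ when $q^n=1$) agrees with the paper's and is fine, though it is subsumed by the observation above.

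If you want a true statement that your reduction does prove, replace $T[Y]$ by the Laurent polynomial ring $T[Y,Y^{-1}]$, extending $\alpha_q$ by $Y^{-1}\mapsto q^{-1}Y^{-1}$: there your argument goes through verbatim, since once $aY^k\in I$ with $a\neq 0$ you obtain $Y^k\in I$ by simplicity of $T$ and then $1=Y^{-k}Y^k\in I$, recovering the classical fact that such a ring is $\alpha_q$-simple if and only if $q$ is not a root of unity. Alternatively the proposition could be salvaged by restricting attention to invariant ideals not contained in $Y\,T[Y]$. As stated, however, both your proof and the paper's stall at the same step; the difference is that you noticed.
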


\begin{proof}
Put $R = T[Y]$.
First we show the ''only if'' statement.
Suppose that $q$ is a root of unity.
Take a non-zero $n \in \mathbb{N}$ with $q^n = 1$.
Then the ideal of $R$ generated by $Y^n$ is $\alpha_q$-simple.
Thus, $R$ is not $\alpha_q$-simple.
By Proposition \ref{definitionkernel} we get that $R$ is not $\delta_{\alpha_q}$-simple.
Now we show the ''if'' statement.
Suppose that $q$ is not a root of unity.
Take a non-zero $\delta_{\alpha_q}$-invariant ideal $I$ of $R$.
We wish to show that $I = R$.
By Proposition \ref{definitionkernel} $I$ is $\alpha_q$-invariant.
Take a non-zero $a \in I$ of least degree $m$.
Seeking a contradiction, suppose that $m > 0$.
Write $a = \sum_{i=0}^m a_i Y^i$, for some 
$a_i \in T$, for $i \in \{0,\ldots,n\}$. 
Then $\alpha_q(a) - k^m a$ is a non-zero element of $I$
of degree less than $m$. This contradicts the minimality of $m$.
Thus $m=0$ and thus $a \in I \cap T$.
Since $T$ is simple, we get that the ideal $J$ of $T$
generated by $a$ equals $T$. In particular, we get that
$I \supseteq T \ni 1$. Thus $I = R$.
\end{proof}

\begin{thm}\label{theoremquantumtorus}
If $T$ is simple and ${\rm char}(R)=0$, 
then the non-associative differential polynomial ring  
$D = T[Y][X ; \id_{T[Y]} , \delta_{\sigma_q}]$ is simple if and only 
if $q$ is not a root of unity.
In that case, $Z(D) = Z(T)$.
\end{thm}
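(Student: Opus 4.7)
The plan is to reduce the assertion to Theorem~\ref{maintheorem}(c) by verifying its two hypotheses: $\delta_{\alpha_q}$-simplicity of $R := T[Y]$, and that $Z(D)$ is a field. Proposition~\ref{proprootunity} immediately equates the first hypothesis with the condition that $q$ is not a root of unity, so the ``only if'' direction of the biconditional is free: if $q$ is a root of unity, then $R$ fails to be $\delta_{\alpha_q}$-simple, so $D$ is not simple (Proposition~\ref{sigmadeltasimple}). For the converse, assume $q$ is not a root of unity and hence $R$ is $\delta_{\alpha_q}$-simple.

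I would next compute the relevant constants. Since $\alpha_q$ fixes $T$, we have $T \subseteq R_{\delta_{\alpha_q}}$. Conversely, an element $f = \sum a_i Y^i$ is a constant iff $(q^i-1)a_i=0$ for all $i$; as $T$ is simple, $Z(T)$ is a field (Proposition~\ref{centerfield}) and the central elements $q^i-1$ are non-zero, hence regular in $T$, for all $i\ge 1$, so $a_i = 0$ for $i \geq 1$. Thus $R_{\delta_{\alpha_q}} = T$, and an analogous computation inside the center gives $Z(R)_{\delta_{\alpha_q}} = Z(T)$. By Theorem~\ref{maintheorem}(b), $Z(D) = Z(T)[b]$ for a monic $b \in T[X]$, and the goal becomes to show $b = 1$, which will simultaneously give $Z(D) = Z(T)$ (a field) and finish the simplicity argument via Theorem~\ref{maintheorem}(c).

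The main step is to rule out $\deg b \geq 1$. Write $b = X^n + \sum_{i<n} c_i X^i$ with $c_i \in T$, $c_n = 1$, and use the commutation formula that follows from the product rule for $S$: since $\sigma = \id$ and $\delta_{\alpha_q}^k(Y) = (q-1)^k Y$, one obtains
\[
X^i Y = \sum_{k=0}^{i} \binom{i}{k}(q-1)^k\, Y X^{i-k}.
\]
Because $X \in N_m(D) \cap N_r(D)$, so do all $X^j$, and $Y$ commutes with $T$, so every parenthesization needed to expand $bY$ and $Yb$ into the basis $\{X^j\}$ is legal. By Corollary~\ref{corcenter}, $b \in Z(D)$ forces $bY = Yb$. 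Extracting the coefficient of $X^{n-1}$ from $bY - Yb$ yields precisely $n(q-1)Y$, since the only $(i,k)$ with $i-k = n-1$ and $k \geq 1$ is $(n,1)$. The hypothesis $\operatorname{char}(T)=0$ and $q \neq 1$ (a root-of-unity case) together with $Z(T)$ being a field show that $n(q-1) \neq 0$, so $n(q-1)Y \neq 0$ in $T[Y]$, a contradiction. Therefore $b = 1$, and the proof is complete.

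The main obstacle is simply bookkeeping in the non-associative setting: one must confirm that each step rearranging $bY$ and $Yb$ can be justified by the nuclear properties $X^j \in N_m(D) \cap N_r(D)$ coming from (N3) and by the fact that the scalars $(q-1)^k$ lie in $Z(T)$; once this is done, the classical argument (isolate the top subleading coefficient in $[b,Y]$) goes through verbatim.
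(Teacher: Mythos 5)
Your proposal is correct and follows essentially the same route as the paper: reduce to Theorem~\ref{maintheorem}(b)(c) via Proposition~\ref{proprootunity}, identify $R_{\delta_{\alpha_q}}=T$, and rule out $\deg b>0$ by extracting the degree-$n{-}1$ coefficient of a commutator (you specialize to $[b,Y]$ where the paper uses $[b,r]$ for general $r\in T[Y]$, but the computation is the same). No gaps.
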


\begin{proof}
The ''only if'' statement follows from Theorem \ref{maintheorem}(c)
and Proposition \ref{proprootunity}.
Now we show the ''if'' statement.
Put $R = T[Y]$ and $\delta = \delta_{\alpha_q}$.
Suppose that $q$ is not a root of unity.
By Proposition \ref{proprootunity},
we get that $R$ is $\delta$-simple. 
By Theorem \ref{maintheorem}(c), we are done if we can show that $Z(S)$ is a field.
To this end, we first note that, by Theorem \ref{maintheorem}(b), 
there is a unique monic $b \in Z(D)$ of least degree $n$.
Seeking a contradiction, suppose that $n > 0$.
Then $b = \sum_{i=0}^n b_i X^i$, 
for some 
$b_i \in R_{\delta}$.
But since $q$ is not a root of unity,
it follows that $R_{\delta} = T$.
Thus $b \in T[X]$.
From the fact that $b \in Z(D)$, we get that
$bt = tb$, for $t \in T$, which in turn implies that
$b_i \in Z(T)$, for $i\in\{0,\ldots,n\}$.
By looking at the degree $n-1$ coefficient in the relations
$br = rb$, for $r \in R$, we get that $\alpha_q = \id_R$,
which contradicts the fact that $q \neq 1$.
Thus $n=0$ and it follows that $b=1$.
By Theorem \ref{maintheorem}(b), we get that 
$Z(D) = Z(R)_{\delta}[1] = Z(T)$.
\end{proof}

\begin{rem}
Given a field $\mathbb{F}$ and $q\in \mathbb{F} \setminus\{0\}$, we may define the so called \emph{quantum plane} (see e.g. \cite[Chapter IV]{Kassel}) as $\mathbb{F}_q[X,Y] = \mathbb{F}\langle X,Y\rangle/(YX-qXY)$.
The quantum plane is an associative algebra and it can be realized as a classical Ore extension.
Indeed, if we define $\sigma : \mathbb{F}[X] \to \mathbb{F}[X]$ by $\sigma(X)=qX$, then the quantum plane $\mathbb{F}_q[X,Y]$
is isomorphic to the Ore extension $\mathbb{F}[X][Y,\sigma,0]$.
While the quantum plane can be seen as a $q$-deformation,
the non-associative Ore extension $D = T[Y][X ; \id_{T[Y]} , \delta_{\sigma_q}]$
that we study in Theorem \ref{theoremquantumtorus}
can be seen as a non-associative deformation of the plane.
\end{rem}

There are several ways to associate an (associative) algebra to
a dynamical system $(G,X)$, where $G$ is a group acting on a topological space $X$.
By associating a skew group algebra (see \cite{oinert2014}) or a crossed product $C^*$-algebra (see \cite{Power}) to the dynamical system,
it is possible to encode the dynamical system into the algebra
in such a way that
dynamical features (faithfulness, freeness, minimality etc) of the dynamical system
correspond to algebraical properties of the algebra.
We shall now show how to
associate a
non-associative differential polynomial ring
to a dynamical system
and exhibit a correspondence between minimality of the dynamical system
and simplicity of the non-associative ring.

For the rest of this section, let $K$ denote any of the real algebras 
$\mathbb{R}$ (real numbers), $\mathbb{C}$ (complex numbers), 
$\mathbb{H}$ (Hamilton's quaternions), 
$\mathbb{O}$ (Graves' octonions), 
$\mathbb{S}$ (sedenions), etc. 
obtained by iterating the classical Cayley-Dickson doubling 
procedure of the real numbers (for more details concerning this
construction, see e.g. \cite{baez2002}).
It is well known that $K$ is then a reduced ring.
Also, apart from the cases when $K$ equals $\mathbb{R}$,
$\mathbb{C}$ or $\mathbb{H}$, $K$ is not associative.
Furthermore, there is an $\mathbb{R}$-linear involution 
$\overline{\cdot} : K \rightarrow K$ and a norm
$| \cdot | : K \rightarrow \mathbb{R}_{\geq 0}$
satisfying $k \overline{k} = |k|^2$, for $k \in K$.
For the rest of this section, let $Y$ be a compact Hausdorff space
and let $g : Y \to Y$ be a continuous map.
A closed subspace $Z$ of $Y$ is called $g$-invariant
if $g(Z) \subseteq Z$.
The action of $g$ on $Y$ is called {\it minimal}
if $\emptyset$ and $Y$ are the only $g$-invariant subspaces of $Y$.
By abuse of notation, we let $C(Y)$ denote the ring of continuous functions $Y \rightarrow K$.
Since $K$ is reduced, we get that $C(Y)$ is also reduced.
The homeomorphism $g : Y \rightarrow Y$ defines a ring homomorphism
$\sigma(g) : C(Y) \rightarrow C(Y)$, where
$\sigma(g)(f) = f \circ g$, for $f \in C(Y)$.
By Proposition \ref{definitionkernel}, $\sigma(g)$ in turn
defines a $\sigma$-kernel derivation $\delta_{\sigma(g)} : C(Y) \rightarrow C(Y)$. 
Note that, by Remark \ref{remarkderivation}, $\delta_{\sigma(g)}$
is a classical derivation if and only if $g = \id_Y$.

\begin{prop}\label{NyPropMinimal}
If the action of $g$ on $Y$ is minimal,
then the ring $C(Y)$ is $\delta_{\sigma(g)}$-simple.
\end{prop}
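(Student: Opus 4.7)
The plan is to use Proposition \ref{definitionkernel} to reduce the claim to showing that $C(Y)$ is $\sigma(g)$-simple, i.e.\ has no non-trivial ideals $I$ with $\sigma(g)(I) \subseteq I$. So suppose $I$ is a non-zero $\sigma(g)$-invariant ideal of $C(Y)$; the goal is to conclude $I = C(Y)$.

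First I would form the common zero set
\[
Z(I) = \{\, y \in Y \mid f(y) = 0 \text{ for every } f \in I \,\},
\]
which is closed as an intersection of zero sets of continuous functions. The key observation is that $Z(I)$ is $g$-invariant: if $y \in Z(I)$ and $f \in I$, then $\sigma(g)(f) = f \circ g$ also lies in $I$ by hypothesis, so $f(g(y)) = (f \circ g)(y) = 0$, proving $g(y) \in Z(I)$. Since $I$ is non-zero, some $f \in I$ is not identically zero, so $Z(I) \ne Y$. Minimality of the action then forces $Z(I) = \emptyset$.

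Next I want to extract a globally non-vanishing function in $I$. For each $y \in Y$, pick $f_y \in I$ with $f_y(y) \neq 0$. Since conjugation $\overline{\cdot} : K \to K$ is continuous, $\overline{f_y} \in C(Y)$, and because $I$ is a two-sided ideal we have $h_y := f_y \overline{f_y} = |f_y|^2 \in I$. This $h_y$ is $\mathbb{R}_{\geq 0}$-valued and strictly positive at $y$, hence strictly positive on some open neighbourhood $U_y$ of $y$. Since $Y$ is compact, finitely many such neighbourhoods $U_{y_1}, \ldots, U_{y_n}$ cover $Y$, and $h := h_{y_1} + \cdots + h_{y_n} \in I$ is then an $\mathbb{R}$-valued function satisfying $h(y) > 0$ for all $y \in Y$.

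Finally, since $Y$ is compact and $h$ is continuous and strictly positive, $h$ is bounded below by a positive constant, so $1/h$ is a continuous $\mathbb{R}$-valued function on $Y$, hence lies in $C(Y)$. Because $h$ is real-valued, the pointwise product $(1/h) \cdot h$ is the constant function $1$, regardless of any non-associativity or non-commutativity of $K$. Thus $1 = (1/h) \cdot h \in C(Y) \cdot I \subseteq I$, giving $I = C(Y)$ as desired. The main obstacle is really the construction in the previous step: one must leverage the existence of a continuous norm-squaring map $f \mapsto f\overline{f}$ into the associative centre $\mathbb{R} \subseteq K$ in order to bypass the non-associativity of $K$ when forming and inverting a positive function.
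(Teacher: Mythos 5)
Your proof is correct and follows essentially the same route as the paper: show the common zero set of $I$ is closed, $g$-invariant and proper, hence empty by minimality, then use compactness to build $h=\sum f\overline{f}=\sum|f|^2\in I$ which is strictly positive, hence invertible, forcing $I=C(Y)$. The only cosmetic difference is that you apply compactness via an open cover of positivity neighbourhoods rather than via the finite-intersection-property argument on the zero sets, and you spell out the inversion step that the paper leaves implicit.
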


\begin{proof}
Suppose that $Y$ is $g$-minimal.
We show that $C(Y)$ is $\delta_{\sigma(g)}$-simple.
Suppose that $I$ is a non-zero $\delta_{\sigma(g)}$-invariant ideal of $C(Y)$.
For a subset $J$ of $I$ define $N_J = \cap_{f \in J} f^{-1}(0)$.
Since $I$ is $\sigma(g)$-invariant it follows that $N_I$ is $g$-invariant.

It is clear that $N_J$ is closed.
Since $I$ is non-zero it follows that $N_I$ is a proper subset of $Y$.
By $g$-minimality of $Y$, we get that $N_I$ is empty.
By compactness of $X$ we get that there is some finite subset $J$ of $I$
such that $N_J$ is empty.
Define $h \in I$ by $h = \sum_{f \in J} f \overline{f} = \sum_{f \in J} |f|^2$.
Since $N_J$ is empty we get that $h(x) \neq 0$ for all $x \in X$.
Therefore $I$ contains the invertible element $h$ and hence
$I = C(Y)$.
\end{proof}

\begin{thm}\label{NYtheoremdynamics}
The non-associative differential polynomial ring 
$D = C(Y)[X ; \id_{C(Y)} , \delta_{\sigma(g)}]$ 
is simple if the action of $g$ on $Y$ is minimal
and the topology on $Y$ is non-discrete.
In that case, $Z(D) = \mathbb{C}$, if $K = \mathbb{C}$, and
$Z(D) = \mathbb{R}$, otherwise.
\end{thm}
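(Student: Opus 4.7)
The plan is to apply Theorem~\ref{maintheorem}(c) with $R = C(Y)$ and $\delta = \delta_{\sigma(g)}$. Proposition~\ref{NyPropMinimal} already gives that $R$ is $\delta$-simple under minimality, so I only need to verify that $Z(D)$ is a field; by Theorem~\ref{maintheorem}(b) this reduces to showing that the monic polynomial $b \in R_\delta[X]$ with $Z(D) = Z(R)_\delta[b]$ satisfies $b = 1$. The displayed identification of $Z(D)$ will then follow from the formula $Z(D) = Z(R)_\delta$.

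The first preparatory step is to identify the coefficient ring. An $f \in C(Y)$ lies in $R_\delta$ iff $f \circ g = f$; for any $c$ in its image the fiber $f^{-1}(c)$ is a non-empty closed $g$-invariant subset of $Y$, which by minimality must equal $Y$, so $f$ is constant. Hence $R_\delta$ identifies with $K$, and the same reasoning restricted to continuous $Z(K)$-valued functions gives $Z(R)_\delta \cong Z(K)$. Since $Z(K) = \mathbb{C}$ when $K = \mathbb{C}$ and $Z(K) = \mathbb{R}$ in all other Cayley-Dickson cases, this already accounts for the final assertion of the theorem once $b = 1$ is established.

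The main step is to force $b = 1$. Assume $\deg(b) = n > 0$ and write $b = \sum_{i=0}^n b_i X^i$ with $b_i \in K$ and $b_n = 1$. Since $\sigma = \id_R$ one has $\pi_j^i = \binom{i}{j}\delta^{i-j}$, so a direct expansion using \eqref{productmonomials} shows that the coefficient of $X^{n-1}$ in $br - rb$ equals $n\delta(r) - [r, b_{n-1}]$ for every $r \in R$. Substituting a constant $r = k \in K$, for which $\delta(k) = 0$, forces $[k, b_{n-1}] = 0$ for all $k \in K$, i.e., $b_{n-1} \in Z(K)$. As $b_{n-1}$ is then a central constant of $K$, it commutes with every $r \in C(Y)$, so the coefficient reduces to $n\delta(r) = 0$ for all $r$. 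Since $K$ has characteristic zero this gives $\delta = 0$, contradicting the fact that minimality together with non-discreteness of $Y$ forces $g \neq \id_Y$ (else, by Hausdorffness, every singleton would be a closed $g$-invariant proper subset), so that one can pick $y_0 \in Y$ with $g(y_0) \neq y_0$ and then, by separating $y_0$ from $g(y_0)$, find $f \in C(Y)$ with $\delta(f)(y_0) = f(g(y_0)) - f(y_0) \neq 0$. Therefore $b = 1$, $Z(D) = Z(R)_\delta = Z(K)$ is a field, and Theorem~\ref{maintheorem}(c) yields the simplicity of $D$ together with the claimed value of its center. I expect the main obstacle to be the coefficient computation, since the non-associativity of $D$ requires careful bookkeeping when expanding $br$ via \eqref{productmonomials} and one must confirm that $b_{n-1}$ lying in the ring-theoretic center $Z(K)$ really suffices to kill $[r, b_{n-1}]$ for arbitrary (not merely constant) $r \in C(Y)$.
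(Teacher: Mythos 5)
Your proposal is correct and follows essentially the same route as the paper's proof: reduce via Theorem \ref{maintheorem}(b)(c) to showing $b=1$, use minimality to see that $R_{\delta}$ consists of constants, and then extract the degree $n-1$ coefficient of $br-rb$ to force $\sigma(g)=\id_R$, which contradicts minimality on a non-discrete space. Your closing worry is easily dispatched: the computation with constant $r=k$ shows $b_{n-1}$ commutes with every element of $K$, and since commutators in $C(Y)$ are computed pointwise this already kills $[r,b_{n-1}]$ for arbitrary $r$.
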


\begin{proof}
Put $R = C(Y)$.
Suppose that the action of $g$ on $Y$ is minimal.
By Proposition \ref{NyPropMinimal} we get that $R$ is $\delta_{\sigma(g)}$-simple.
By Theorem \ref{maintheorem}(c) we are done if we can show that $Z(D)$ is a field.
To this end, we first note that, by Theorem \ref{maintheorem}(b), 
there is a unique monic $b \in Z(D)$ (up to addition of elements of $Z(R)_\delta$, which are of degree $0$) of least degree $n$.
Seeking a contradiction, suppose that $n > 0$.
Then $b = \sum_{i=0}^n b_i X^i$, 
for some
$b_i \in R_{\delta_{\sigma(g)}}$.
Take $i \in \{ 0,\ldots,n \}$
and $k_i \in b_i(Y)$.
Since
$b_i \in R_{\delta_{\sigma(g)}}$,
we get that
the set $b_i^{-1}(k_i)$ is non-empty and $g$-invariant.
By $g$-minimality of $Y$, we get that $Y = b_i^{-1}(k_i)$,
i.e. $b_i$ is the constant function $k_i$.
Thus $b = \sum_{i=0}^n k_i X^i$.
From the fact that $b \in Z(D)$, we get that
$bk = kb$, for $k \in K$, which in turn implies that
$b_i \in Z(K)$, for $i\in \{0,\ldots,n\}$.
By looking at the degree $n-1$ coefficient in the relations
$br = rb$, for $r \in R$, we get that $\sigma(g) = \id_R$.
Since the topology on $Y$ is non-discrete,
this contradicts $g$-minimality of $Y$.
Thus $n=0$ and it follows that $b=1$.
Thus, by Theorem \ref{maintheorem}(b), we get that 
$Z(D) = Z(R)_{\delta_{\sigma(g)}}[1] = Z(K)$.
It is well known that $Z(K)=\mathbb{R}$ for all $K$
except $K = \mathbb{C}$.
\end{proof}

\begin{prop}\label{propminimal}
Suppose that $g : Y \to Y$ is a homeomorphism.
The ring $C(Y)$ is $\delta_{\sigma(g)}$-simple
if and only if the action of $g$ on $Y$ is minimal.
\end{prop}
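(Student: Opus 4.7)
The forward direction, namely that minimality of the action implies $\delta_{\sigma(g)}$-simplicity of $C(Y)$, is already established by Proposition \ref{NyPropMinimal} (and does not even require $g$ to be a homeomorphism). So the content of the proposition lies in the converse, and I plan to prove it by contrapositive.

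Assume that the action of $g$ on $Y$ is not minimal; then there is a proper non-empty closed subset $Z \subsetneq Y$ with $g(Z) \subseteq Z$. The natural candidate ideal is
\[
I_Z \;=\; \{\, f \in C(Y) \,:\, f|_Z \equiv 0 \,\}.
\]
I would verify three things about $I_Z$: that it is a two-sided ideal of $C(Y)$ (immediate from pointwise multiplication, which makes $C(Y)$ non-associative only through the pointwise action of $K$, but still distributes and absorbs ideal elements), that it is non-zero, and that it is proper. Non-zeroness uses that $Y$ is compact Hausdorff, hence normal: Urysohn's lemma yields a continuous function $Y \to [0,1] \subseteq \mathbb{R} \subseteq K$ that vanishes on $Z$ and takes the value $1$ at some chosen point of $Y \setminus Z$. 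Properness follows from $Z \neq \emptyset$, since the constant function $1$ is not in $I_Z$.

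The key remaining point is $\sigma(g)$-invariance of $I_Z$. For $f \in I_Z$ and $z \in Z$, using $g(Z) \subseteq Z$ one has $\sigma(g)(f)(z) = f(g(z)) = 0$, so $\sigma(g)(f) \in I_Z$. By Proposition \ref{definitionkernel}, $\sigma(g)$-invariance of an ideal is equivalent to $\delta_{\sigma(g)}$-invariance, so $I_Z$ is a proper non-zero $\delta_{\sigma(g)}$-invariant ideal, contradicting $\delta_{\sigma(g)}$-simplicity. This yields the desired contrapositive and completes the equivalence.

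No step here looks genuinely delicate; the only place demanding care is the existence of a nontrivial real-valued continuous function vanishing on $Z$, which is where compact Hausdorffness of $Y$ is used (and where the fact that $\mathbb{R}$ sits inside every Cayley--Dickson algebra $K$ lets us transport this function into $C(Y)$). The homeomorphism hypothesis on $g$ is not logically essential for the argument above, but it ensures, together with Proposition \ref{definitionkernel}, that $\sigma(g)$ is a ring automorphism of $C(Y)$, which is the setting in which the statement is naturally read.
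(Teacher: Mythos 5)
Your proof is correct, but in the ``only if'' direction you take a genuinely different route from the paper. You argue by contrapositive with the vanishing ideal $I_Z = \{ f \in C(Y) : f|_Z \equiv 0 \}$ of a non-trivial closed $g$-invariant set $Z$, using Urysohn's lemma (normality of the compact Hausdorff space $Y$, plus $\mathbb{R} \subseteq K$) to see that $I_Z$ is non-zero; invariance of $I_Z$ under $\sigma(g)$ then follows immediately from the forward invariance $g(Z) \subseteq Z$, so the homeomorphism hypothesis is indeed never used, as you observe. The paper instead argues directly: it takes the ideal of functions vanishing \emph{outside} $Z$, shows it is $\delta_{\sigma(g)}$-invariant, concludes from simplicity that it is zero, and then invokes complete regularity to deduce $Z = \emptyset$. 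That version needs $g(Y \setminus Z) \subseteq Y \setminus Z$, i.e.\ $g^{-1}(Z) \subseteq Z$, which is the opposite containment from the definition of $g$-invariance and is where the paper leans on $g$ being a homeomorphism; it also needs the final step from ``no non-zero function supported in $Z$'' to ``$Z$ is empty'', which is more delicate than your Urysohn step (a set with empty interior also supports no non-zero continuous function). Your choice of ideal sidesteps both of these issues, so your argument is not only different but cleaner and proves slightly more (the converse holds for any continuous $g$).
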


\begin{proof}
The ''if'' statement follows from Proposition \ref{NyPropMinimal}.

Now we show the ''only if'' statement.
Suppose that $C(Y)$ is $\delta_{\sigma(g)}$-simple.
We show that $Y$ is $g$-minimal.
Suppose that $Z$ is a closed $g$-invariant subset of $Y$
with $Z \subsetneq Y$.
We wish to show that $Z = \emptyset$.
To this end, let $I_Z$ denote the set of continuous functions
$X \rightarrow \mathbb{C}$ that vanish outside $Z$.
It is clear that $I_Z$ is an ideal of $C(Y)$.
It is also clear that $I_Z \subsetneq C(Y)$
since all non-zero constant maps belong to $C(Y) \setminus I_Z$.
Now we show that $I_Z$ is $\delta_{\sigma(g)}$-invariant.
Take $f \in I_Z$ and $x \in Y \setminus Z$.
Then $\delta_{\sigma(g)}(f)(x) = \sigma(g)(f)(x) - f(x) = [f \in I_Y \Rightarrow f(x)=0] =
\sigma(g)(f)(x) = f(g(x))=0$. The last equality follows since $g(x) \in Y\setminus Z$.
Now we prove this. Seeking a contradiction, suppose that $g(x) \in Z$.
Then, by the $g$-invariance of $Z$, we get $g^{-1}(Z)=Z$,
and $x = g^{-1}(g(x)) \in Z$, which is a contradiction.
By $\delta_{\sigma(g)}$-simplicity of $C(Y)$ this implies that
$I_Z = \{ 0 \}$. Since $Y$ is compact, it is completely regular.
Therefore, we get that $Z = \emptyset$.
\end{proof}

\begin{thm}\label{theoremdynamics}
Suppose that $g : Y \to Y$ is a homeomorphism.
The non-associative differential polynomial ring 
$D = C(Y)[X ; \id_{C(Y)} , \delta_{\sigma(g)}]$ 
is simple if and only if the action of $g$ on $Y$ is minimal
and the topology on $Y$ is non-discrete.
In that case, $Z(D) = \mathbb{C}$, if $K = \mathbb{C}$, and
$Z(D) = \mathbb{R}$, otherwise.
\end{thm}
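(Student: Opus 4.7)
The \emph{if} direction, together with the description of $Z(D)$ in the simple case, is already Theorem~\ref{NYtheoremdynamics}, so all remaining work lies in the \emph{only if} direction; the overall strategy there is to combine Proposition~\ref{sigmadeltasimple} and Proposition~\ref{propminimal} with Theorem~\ref{maintheorem}(c).

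Assume $D$ is simple. Applied with $\sigma=\id_{C(Y)}$ and $\delta=\delta_{\sigma(g)}$, Proposition~\ref{sigmadeltasimple} immediately yields that $R=C(Y)$ is $\delta_{\sigma(g)}$-simple. The hypothesis that $g$ is a homeomorphism (which is the new ingredient compared to Theorem~\ref{NYtheoremdynamics}) activates the converse direction of Proposition~\ref{propminimal} and therefore gives that the action of $g$ on $Y$ is minimal. It remains to deduce that $Y$ is non-discrete; I would do this by contrapositive: if $Y$ is discrete then compactness forces $|Y|=n<\infty$, and minimality forces $g$ to act as a single $n$-cycle, so that $\sigma(g)$ has multiplicative order exactly $n$ on $R$. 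In the boundary subcase $n=1$ we have $g=\id_Y$ and $\delta_{\sigma(g)}=0$, so $D=C(Y)[X]$ admits the proper nonzero ideal generated by $X$ and is not simple.

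For $n\geq 2$, the plan is to exploit the finite-order relation $\sigma(g)^n=\id_R$, which after binomial expansion of $(\id+\delta_{\sigma(g)})^n=\id$ yields the polynomial identity $\sum_{k=1}^{n}\binom{n}{k}\delta_{\sigma(g)}^k=0$ on $R$, in order to construct a monic polynomial $b\in Z(K)[X]$ of positive degree that lies in $Z(D)$. By Corollary~\ref{corcenter} this reduces to checking that $b$ commutes with $X$, commutes with $R$, and associates with $R$. Once such a $b$ is produced, Theorem~\ref{maintheorem}(b)--(c) forces $Z(D)\supsetneq Z(R)_{\delta_{\sigma(g)}}$, so $Z(D)$ fails to be a field, contradicting the simplicity of $D$. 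The main obstacle I anticipate is precisely this $n\geq 2$ construction: producing the explicit central polynomial requires delicate combinatorial work with the identities $X^i r=\sum_k\binom{i}{k}\delta^k(r)X^{i-k}$ and the right-hand version from Proposition~\ref{rightformula}, together with a careful verification of the associativity condition in Corollary~\ref{corcenter} in the non-associative setting (since $X\notin N_l(D)$ in general, by Remark~\ref{remarkderivation} combined with Proposition~\ref{newproof}(b)). The asserted description $Z(D)=\mathbb{C}$ if $K=\mathbb{C}$ and $Z(D)=\mathbb{R}$ otherwise then follows from the center computation already carried out in Theorem~\ref{NYtheoremdynamics}.
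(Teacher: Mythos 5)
Your treatment of the ``if'' direction and of the first two steps of the ``only if'' direction ($\delta_{\sigma(g)}$-simplicity via Proposition~\ref{sigmadeltasimple}, then minimality via Proposition~\ref{propminimal}) matches the paper. Your observation that a compact discrete space is merely \emph{finite} --- say with $n$ points, on which minimality forces $g$ to act as an $n$-cycle --- is in fact more careful than the paper's own proof, which asserts that such a $Y$ must be a one-element set. Your $n=1$ case is fine. The gap is in the $n\geq 2$ branch: the monic central polynomial of positive degree that you propose to build does not exist. Indeed, for any monic $b=\sum_{i=0}^{m} b_iX^i\in Z(D)$ with $m\geq 1$, the argument in the proof of Theorem~\ref{NYtheoremdynamics} still applies when $Y$ is finite: each $b_i\in R_{\delta_{\sigma(g)}}$ has $g$-invariant level sets, so minimality makes each $b_i$ a constant with value in $Z(K)$, and then the degree $m-1$ coefficient of $[b,r]=0$ reads $m\,\delta_{\sigma(g)}(r)=[r,b_{m-1}]=0$. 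Since $K\supseteq\mathbb{R}$, this forces $\delta_{\sigma(g)}=0$, i.e.\ $\sigma(g)=\id_R$, which is false for an $n$-cycle with $n\geq 2$. Hence $Z(D)=Z(K)$ is a field, $R$ is $\delta_{\sigma(g)}$-simple, and Theorem~\ref{maintheorem}(c) shows that $D$ is \emph{simple} in exactly the case you are trying to exclude.

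So the ``obstacle'' you anticipated for $n\geq 2$ is not a technical difficulty but an impossibility: the ``only if'' direction is false for finite $Y$ with at least two points. Concretely, take $K=\mathbb{R}$, $Y=\{1,2\}$ discrete and $g$ the transposition; then $R=\mathbb{R}^2$ is associative and reduced, so $\delta_{\sigma(g)}$ is not a derivation by Remark~\ref{remarkderivation}, $R$ is $\delta_{\sigma(g)}$-simple, and Theorem~\ref{theoremassociative} already gives simplicity of $D$ even though the topology on $Y$ is discrete and the action is minimal. Your blind attempt has therefore exposed an error both in the statement and in the paper's proof (whose assertion that a compact discrete Hausdorff space is a singleton is what hides the problem). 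The correct dividing line in the discrete case is $|Y|=1$ versus $|Y|\geq 2$, not discrete versus non-discrete; with that amendment your argument closes completely.
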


\begin{proof}
Put $R = C(Y)$.
The ''if'' statement follows from Theorem \ref{NYtheoremdynamics}.

Now we show the ''only if'' statement.
Suppose that $S$ is simple.
By Theorem \ref{maintheorem} and Proposition \ref{propminimal}.
it follows that the action of $g$ on $Y$ is minimal.
Seeking a contradiction, suppose that the topology on $Y$ is discrete.
Since the topology is Hausdorff it follows that $Y$ is a one-element set.
Thus $S$ equals the polynomial ring $K[X]$ which is not simple.
\end{proof}

\section{Associative Coefficients}\label{sectionassociative}

In this section, 
we show that if the ring of coefficients is associative, 
then we can often obtain simplicity of the differential polynomial ring
just from the assumption that the map $\delta$ is not a derivation.

\begin{thm}\label{theoremassociative}
Suppose that $D = R[X ; \id_R , \delta]$ is a non-associative
differential polynomial ring such that $R$ is associative
and all positive integers are regular in $R$.
If $R$ is $\delta$-simple but $\delta$ is not a derivation,
then $D$ is simple.
\end{thm}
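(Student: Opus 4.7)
The plan is to argue by contradiction: suppose $D$ has a proper non-zero ideal $I$ and show this forces $\delta$ to be a derivation, contradicting the hypothesis. First I pick a non-zero $a \in I$ of minimal degree $n$. Mimicking the opening of the proof of Theorem \ref{maintheorem}(a), I consider the set $J$ consisting of $0$ together with the leading coefficients of degree-$n$ elements of $I$, and verify that $J$ is a non-zero $\delta$-invariant two-sided ideal of $R$; the needed rearrangements are legitimate because $R \subseteq N_l(D)$ by Proposition \ref{newproof}(a) (since $R$ is associative) and $X \in N_m(D) \cap N_r(D)$ by (N3). The $\delta$-simplicity of $R$ then forces $J = R$, producing a monic
\[
a = X^n + c_{n-1}X^{n-1} + \cdots + c_0 \in I.
\]
The case $n = 0$ would give $1 \in I$, contradicting the properness of $I$, so I may assume $n \geq 1$.

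Next I exploit minimality of $n$ twice. A direct computation using the nuclear inclusions above yields $[X, a] = \sum_{i<n} \delta(c_i) X^i$, an element of $I$ of degree below $n$; minimality forces it to vanish, so every $c_i$ lies in $R_\delta$. Using the specialization $X^i r = \sum_{k=0}^i \binom{i}{k} \delta^k(r) X^{i-k}$ of \eqref{productmonomials} and again applying $R \subseteq N_l(D)$, I expand $[a,r] = ar - ra$ for arbitrary $r \in R$: the coefficient of $X^n$ cancels because $c_n = 1$, and the coefficient of $X^{n-1}$ simplifies to $n\delta(r) - [r, c_{n-1}]$. Minimality once more forces $[a,r] = 0$, yielding the key identity
\[
n\,\delta(r) = [r, c_{n-1}], \qquad r \in R.
\]

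Finally I use associativity of $R$ a second time: a direct verification shows that the map $r \mapsto [r, c_{n-1}]$ is a (genuine) derivation of $R$, so $n\delta$ is a derivation. Applying this to a product $rs$ and subtracting $n\bigl(r\delta(s) + \delta(r)s\bigr)$ gives $n\bigl(\delta(rs) - r\delta(s) - \delta(r)s\bigr) = 0$. The hypothesis that every positive integer is regular in $R$ cancels the factor of $n$, so $\delta$ satisfies the Leibniz rule, contradicting the assumption that $\delta$ is not a derivation. The main obstacle I anticipate is the bookkeeping inherent in the non-associative setting: each rearrangement during the coefficient calculations for $[X,a]$ and $[a,r]$ must be justified through $R \subseteq N_l(D)$ or the nuclear properties of $X$ rather than by freely moving parentheses. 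Once these brackets are computed, the argument is a clean descent on the degree.
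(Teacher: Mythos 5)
Your proof is correct, and it reaches the contradiction by a slightly different computation than the paper's. The skeleton is the same: take a non-zero $b \in I$ of minimal degree $n$, use $\delta$-simplicity of $R$ to make it monic, assume $n>0$, extract a degree-$(n-1)$ coefficient identity carrying a factor of $n$, and cancel $n$ by regularity to force the Leibniz rule. The difference lies in which element of $I$ is interrogated. The paper uses the associator $(b,d,e)$ for $d,e\in R$: its degree-$n$ part is $(c_n d)e - c_n(de) = 0$, so minimality kills the whole associator, and its degree-$(n-1)$ part is $n d\delta(e)+n\delta(d)e+(c_{n-1}d)e-n\delta(de)-c_{n-1}(de)$; associativity of $R$ cancels the two $c_{n-1}$ terms and yields $n$ times the Leibniz defect directly. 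You instead use the commutator $[b,r]$, whose vanishing gives $n\delta(r)=[r,c_{n-1}]$, and then invoke the (associativity-dependent) fact that $r\mapsto[r,c_{n-1}]$ is a derivation, so $n\delta$, and hence $\delta$ after cancelling $n$, is a derivation. Both routes use associativity of $R$ exactly once and are equally short; yours additionally identifies $n\delta$ as the inner derivation induced by $-c_{n-1}$, which is a little more informative, while your preliminary step $[X,b]=0$ (showing each $c_i\in R_\delta$) is correct but not actually needed for the rest of your argument.
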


\begin{proof}
Let $I$ be a non-zero ideal of $D$. 
We wish to show that $I=D$. 
Pick a non-zero element $b \in I$ of least degree $n$.
Let $b=\sum_{i=0}^n c_i X^i$, for some $c_0,\ldots,c_n \in R$.
By mimicking the proof of Theorem \ref{maintheorem}(a),
we can conclude that we may choose $c_n=1$.
Seeking a contradiction, suppose that $n > 0$. 
We claim that $(b,d,e)=0$, for all $d,e \in R$. 
If we assume that the claim holds, then
by extracting the terms of degree $n-1$ from the
relation $(b,d,e)=0$ we get that
$n d\delta(e) +n\delta(d)e+ (c_{n-1}d)e-n\delta(de)-c_{n-1}(de) = 0$.
But since $R$ is associative and $n$ is regular,
this implies that 
$d \delta(e) + \delta(d)e = \delta(de)$
which contradicts the fact that $\delta$ is not a derivation.
Thus $n=0$ and hence $1 = b \in I$ which in turn implies that $I=D$.
Now we show the claim.
The degree $n$ part of $(b,d,e)$ equals
$(c_n d)e - c_n (de) = (1 \cdot d)e - 1 \cdot (de) = 0$.
Thus, since $(b,d,e) \in I$, we get that $(b,d,e)=0$,
from the minimality of $n$.
\end{proof}

\begin{rem}
In the cases when $T$ is associative, i.e.
in the cases when $K=\mathbb{R}$, $K = \mathbb{C}$ or $K = \mathbb{H}$,
then Theorem \ref{theoremassociative} can be used to simplify the proofs
of Theorem \ref{theoremquantumtorus} and Theorem \ref{theoremdynamics}.
\end{rem}

\vspace{-4mm}

\end{document}